\newtheorem{thm}{Theorem} 
\newtheorem{lemma}[thm]{Lemma} 
\newtheorem{cor}[thm]{Corollary}
\newtheorem{conjecture}[thm]{Conjecture} 
\newtheorem{proposition}[thm]{Proposition}
\newtheorem{definition}{Definition}  
\newtheorem{claim}{Claim}
\newtheorem{note}{Note}
\def\halnospace{\hfill $\triangle$}
\def\pp{\mathcal P}
\def\qq{\mathcal Q}
\def\tt{\mathcal T}
\def\ss{\mathcal S}
\def\xx{\mathcal X}
\newcommand{\mc}{\mathcal}
\title{The Maximum Chromatic Number of the Disjointness Graph of Segments on $n$-point Sets in the Plane with $n\leq 16$}
\author[1]{J. J. Garc\'{\i}a-Davila}
\author[2]{J. Lea\~nos}
\author[3]{M. Lomel\'{\i}-Haro}
\author[4]{L. M. R\'{\i}os-Castro}
\affil[1,2]{ Unidad Acad\'emica de Matem\'aticas, Universidad Aut\'onoma de Zacatecas, M\'exico, 
\texttt{jgarcia.mate.uaz@gmail.com, jleanos@uaz.edu.mx}}
\affil[3]{ Instituto de F\'{\i}sica y Matem\'aticas, Universidad Tecnol\'ogica de la Mixteca, M\'exico, 
\texttt{lomeli@mixteco.utm.mx}}
\affil[4]{Academia de F\'{\i}sico--Matem\'aticas, Instituto Polit\'ecnico Nacional, CECYT 18, M\'exico, 
\texttt{lriosc@ipn.mx} }
\begin{document}
\maketitle
	
\abstract{
Let $P$ be a finite set of points in general position in the plane. 
The {\em disjointness graph of segments} $D(P)$ of $P$ is the graph
whose vertices are all the closed straight line segments with endpoints in $P$, two of which are adjacent in $D(P)$ if and only if they are disjoint. As usual, we use 
$\chi(D(P))$ to denote the chromatic number of $D(P)$, and use $d(n)$ to denote the maximum $\chi(D(P))$ taken over all sets $P$ of $n$ points in general position in the plane. In this paper we show that $d(n)=n-2$ if and only if  $n\in \{3,4,\ldots ,16\}$.}

Keywords: Chromatic number, Disjointness graph of segments, Complete geometric graphs.

\section{Introduction}\label{sec:intro}

The {\em chromatic number} of a graph $G$ is the minimum number of colors needed to color 
its vertices so that adjacent 
vertices receive different colors; it is denoted by $\chi(G)$. 
For $k,n\in {\mathbb Z}^+$ with 
$k \leq n/2$, the {\em Kneser graph} $KG(n,k)$ has as its vertices the $k$-subsets of 
$\{1,2,\ldots,n\}$ and two of such $k$-subsets form an edge if and only if they are disjoint. 
Kneser conjectured \cite{kneser} in 1956 that $\chi( KG (n,k )) = n-2k+2$. 
This conjecture was proved by Lov\'asz \cite{lovasz} 
and independently by B\'ar\'any \cite{barany} in $1978$.

Let $P$ be a set of $n\geq 2$ points in general position in the plane,   
and let ${\cal P}$ be the set of all $\binom{n}{2}$ closed straight line segments with endpoints in $P$. 
The {\em disjointness graph of segments} $D(P)$ of $P$ is the graph whose vertex set
is ${\cal P}$, and two elements of ${\cal P}$ are adjacent if and only if their
corresponding segments are disjoint. See Figure~\ref{fig:T1T2} for an example. 
 
The disjointness graph of segments $D(P)$ was introduced in $2005$ by Araujo et al. \cite{gaby}, as a geometric 
version of the Kneser graph $KG(n,k)$ for $k=2$. It follows from the definitions of $KG(n,2)$ and $D(P)$ that if $|P|=n\geq 2$, then both have size
$\binom{n}{2}$ and $KG(n,2)$ contains a subgraph which is isomorphic to $D(P)$. Similarly, we note that the crossings between
segments of $\pp$ are responsible for the edges of $KG(n,2)$  that are not in $D(P)$. Indeed, suppose that $P=\{p_1, p_2,\ldots ,p_n\}$
and consider the  natural bijection $f(i)\to p_i$ between $\{1,2,\ldots ,n\}$ and $P$. Note that if $\{i,j\}$ and $\{k,l\}$ are adjacent in 
$KG(n,2)$, then the corresponding segments of $\pp$ defined by $\{p_i,p_j\}$ and $\{p_k,p_l\}$ are also adjacent in $D(P)$, unless they cross each other.  
In a nutshell, the difference between $KG(n,2)$ and $D(P)$ is completely determined by the crossings between segments of $\pp$. On the other hand, in~\cite{magistral}
it was proved that the number of crossings in $\pp$ grows at least $0.37997\binom{n}{4}+\Theta(n^3)$, and so the number of edges that are in $KG(n,2)$ but not in $D(P)$ 
is linear in the number of edges of $KG(n,2)$.   

It is natural to ask about Kneser's conjecture in the context of the disjointness graph of segments. Since $D(P)$ is a subgraph of $KG(n,2)$, it is clear that
 $\chi(D(P))\leq\chi(KG(n,2))=n-2$. Moreover, since the number of edges of $D(P)$ is significantly smaller than the number of edges of $KG(n,2)$,
 one might expect that $\chi(D(P))< n-2$ most of the time.
 
 For $n\in {\mathbb Z}^+$, let us define
 $$d(n):=\max\{\chi(D(P))~:~P \text{ is an } n-\text{point set in general position in the plane}\}.$$ 
 
\noindent The following questions arise naturally from the above discussion.\\
 
\noindent {\bf Question 1.} {\em For what values of $n$ is $d(n)=n-2$?} \\

\noindent {\bf Question 2.} More generally, {\em What is the value of $d(n)$ for each $n\in {\mathbb Z}^+$?}\\     

 The systematic study of several combinatorial properties of $D(P)$ began with the work of Araujo et al. in 2005~\cite{gaby}. In particular, the following general bounds for
 $d(n)$ with $n\geq 3$ were proved in~\cite{gaby},
\[
5\left\lfloor\frac{n}{7}\right\rfloor \leq d(n) \leq 
\min \left\{ n-2, n + \frac{1}{2} - \frac{\lfloor \log\log(n)\rfloor}{2}\right\}. 
\]
We note that these inequalities imply that $d(7)=5$.  

The problem of determining the chromatic number $\chi(D(P))$ of $D(P)$ remains open in general. Furthermore, as far as we know, this problem has been solved only for two families of points: the convex sets and the double chains. 

For $m\in {\mathbb Z}^+$ we shall use $C_m$ to denote a set of $m$ points in (general and) {\em convex position} in the plane. We recall that for $k,l\in {\mathbb Z}^+$ 
a {\em double chain} $C_{k,l}$ is a $(k+l)$-point set in general position in the plane such that $C_{k,l}$ is the dijoint union of $C_k$ and $C_l$ where the points are located in such a way that  any point of $C_k$ (resp. $C_l$) is below (resp. above) every straight line spanned by two points of $C_l$ (resp. $C_k$). In Figure~\ref{fig:T1T2}($a$) the sets 
$T_1=\{t_1^1, t_1^2, t_1^3\}$ and $T_2=\{t_2^1, t_2^2, t_2^3\}$ are two instances of $C_3$, and $T_1\cup T_2$ is an instance of $C_{3,3}$. 

In~\cite{Cn} 2018 the exact value of $\chi(D(C_n))$ has been settled by Fabila-Monroy,  Jonsson,  Valtr and Wood:
 \[
\chi(D(C_n))=n-\left\lfloor\sqrt{2n+\frac{1}{4}}-\frac{1}{2}\right\rfloor.
\]

Similarly, in~\cite{mario}  2020 Fabila-Monroy, Hidalgo-Toscano, Lea\~nos and Lomel\'{\i}­-Haro have shown that if $C_{k,l}$ is a double 
chain with $l\geq \max\{3,k\}$,  then
\[
\chi(D(C_{k,l}))=k+l-\left\lfloor\sqrt{2l+\frac{1}{4}}-\frac{1}{2}\right\rfloor.
\]
Note that if $l=k$, then $\chi(D(C_{k,l}))$ provides the currently best lower bound of $d(n)$, namely:
\[
d(n)\geq n-\left\lfloor\sqrt{n+\frac{1}{4}}-\frac{1}{2}\right\rfloor.
\]

As we shall see, the exact values of $\chi(D(C_n))$ and $\chi(D(C_{k,l}))$ will be useful in our proof. Another important fact that we use is a result proved by Szekeres and 
Peters in~\cite{6gons} 2006, in which they confirmed the following conjecture for $n=6$.

\begin{conjecture}[Erd\H{o}s and Zsekeres, 1935]\label{con:ES}
Any set of $2^{m-2}+1$ points in general position in the plane contains a convex $m$-gon.
\end{conjecture}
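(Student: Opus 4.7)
The plan is to prove the conjecture by induction on $m$ with a refined cup-cap argument aimed at the exponential bound $2^{m-2}+1$. After a generic rotation (preserving general position), I assume all points have distinct $x$-coordinates. A left-to-right sequence $(p_1,\ldots,p_k)$ is a $k$-\emph{cup} if the slopes of $p_ip_{i+1}$ strictly increase, and a $k$-\emph{cap} if they strictly decrease. Since a $k$-cup and an $l$-cap sharing an endpoint form a convex $(k+l-1)$-gon, it suffices to show that every set of $2^{m-2}+1$ points contains either an $m$-cup or an $m$-cap; let $g(k,l)$ denote the minimum number of points guaranteeing this dichotomy, so that the conjecture reduces to $g(m,m)\leq 2^{m-2}+1$.

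The key step is to replace the classical additive recurrence $g(k,l)\leq g(k-1,l)+g(k,l-1)-1$, which yields only the binomial bound $\binom{2m-4}{m-2}+1$, with a multiplicative one. Label each point $p$ by the pair $(a(p),b(p))$, where $a(p)$ and $b(p)$ denote the lengths of the longest cup and the longest cap ending at $p$, respectively. The Erd\H{o}s-Szekeres argument observes that distinct points must carry distinct labels; the additional ingredient I aim to prove is a \emph{planar concentration lemma} stating that among points sharing a fixed label $(a,b)$, the mutual slope arrangement either extends some cup or cap by a unit, or isolates a convex-position subset to which the inductive hypothesis for $m-1$ applies. Iterating a halving of the effective label range twice per increment of $m$ then produces the bound $2^{m-2}+1$, matching the extremal construction of an $(m-1)\times(m-1)$ grid of slightly perturbed points.

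The main obstacle is establishing this planar concentration lemma in a sufficiently quantitative form. I plan to attack it by extracting the combinatorial mechanism driving the Szekeres-Peters SAT verification for $m=6$, reformulating that mechanism as an $m$-independent structural statement about slope sequences, and then strengthening the B\'ar\'any-Valtr positive-fraction convex-position theorem to a stability result that controls how close a maximal cup-cap free configuration can be to the conjectured extremal one. The hardest substep will be quantifying this stability so that no configuration of more than $2^{m-2}$ points simultaneously avoids both an $m$-cup and an $m$-cap; this is precisely the gap between the binomial and the exponential bound, and it is the concrete technical target where the proof must overcome the central difficulty identified in all prior approaches.
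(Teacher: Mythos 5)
You have set out to prove a statement that the paper itself does not prove and does not claim to prove: it is stated as Conjecture~\ref{con:ES} precisely because it is a famous open problem. The paper only uses the single case $m=6$, which Szekeres and Peters verified by a massive computer search in~\cite{6gons}, and that case is all that the proof of Theorem~\ref{thm:main} requires (to get $d(n)\leq n-3$ for $n\geq 17$). So there is no ``paper proof'' to match; a blind attempt that ends with a complete proof of this statement should itself be a red flag.

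Beyond that, your proposal contains a concrete, unrepairable error in its very first reduction. You claim the conjecture reduces to showing $g(m,m)\leq 2^{m-2}+1$, i.e.\ that $2^{m-2}+1$ points force an $m$-cup or an $m$-cap. This is provably false: Erd\H{o}s and Szekeres showed that the cup-cap bound is \emph{tight}, exhibiting configurations of $\binom{k+l-4}{k-2}$ points with no $k$-cup and no $l$-cap, so that $g(m,m)=\binom{2m-4}{m-2}+1$ exactly. Already for $m=4$ this gives $g(4,4)=7>5=2^{2}+1$, and the gap grows like $4^m/\sqrt{m}$ versus $2^m$. The convex $m$-gons guaranteed in those extremal configurations simply are not cups or caps, so no ``multiplicative recurrence'' or relabeling argument can push $g(m,m)$ down to the conjectured bound; any proof of the conjecture must abandon the pure cup-cap dichotomy. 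Finally, the load-bearing ingredients you invoke --- the ``planar concentration lemma'' and the quantitative stability strengthening of B\'ar\'any--Valtr --- are never proved; they are exactly the open content of the problem. For context, the best known general upper bound is Suk's $2^{m+o(m)}$ (2017), which is a deep result and still falls short of $2^{m-2}+1$; your proposal is a research program whose first step is already false, not a proof.
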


The study of the combinatorial properties of the disjointness graph of segments and related graphs have received considerable attention lattely. 
For more results about this kind of graphs we refer the reader to~\cite{lara,birgit,gaby,dujmovic,us1,pach-tardos-toth,pach-tomon}. 

Our main objective in this paper is to give a full answer to the Question 1. As we have mentioned above, by Lov\'asz's 
theorem~\cite{lovasz} and the fact that $D(P)$ is a subgraph of $KG(n,2)$, it follows that $\chi(D(P))\leq n-2$ whenever $n=|P|\geq 2$. 
Our main result  is the following theorem.
 
\begin{thm}\label{thm:main}
For $n\geq 2$ integer, let $d(n)$ be defined as above. Then $d(n)=n-2$ iff $n\in \{3,4,\ldots ,16\}$. 
\end{thm}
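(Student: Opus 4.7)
The plan is to separate the biconditional into two independent tasks, since the upper bound $d(n)\le n-2$ already holds for all $n\ge 2$ via the inclusion $D(P)\subseteq KG(n,2)$ and Lov\'asz's theorem. Thus I must show (i) $d(n)\ge n-2$ for every $n\in\{3,\ldots,16\}$ by constructing witnessing point sets, and (ii) $d(n)\le n-3$ for every $n\ge 17$ by a general argument.

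For (i) I would exhibit an $n$-point set $P$ attaining $\chi(D(P))=n-2$ for each $n$ in the range. The formula $\chi(D(C_n))=n-\lfloor\sqrt{2n+\frac{1}{4}}-\frac{1}{2}\rfloor$ from~\cite{Cn} achieves $n-2$ exactly for $n\in\{3,4,5\}$, so convex position handles those three cases. For $n\in\{6,7,8,9,10\}$, the double-chain formula from~\cite{mario} applied with $l=\lceil n/2\rceil$ yields $\chi(D(C_{k,l}))=n-2$, with concrete witnesses $C_{3,3}$, $C_{3,4}$, $C_{4,4}$, $C_{4,5}$, $C_{5,5}$. A short check shows that neither $C_n$ nor any $C_{k,l}$ reaches $n-2$ once $n\ge 11$, so for the six remaining values $n\in\{11,12,13,14,15,16\}$ one is forced into ad hoc configurations. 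I would seek these as carefully chosen perturbations of double chains (adding points in controlled positions) and, in each case, certify $\chi(D(P))=n-2$ by combining an explicit $(n-2)$-coloring (the upper bound) with a lower bound argument that will most likely be computer-assisted, since $\chi$ is hard to compute by hand.

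For (ii) the key reduction is a monotonicity bound: given any $(n+1)$-point set $P'$ and any $p\in P'$, the $n$ segments of $D(P')$ incident to $p$ all contain the point $p$, hence are pairwise non-disjoint and form an independent set of $D(P')$. Extending an optimal coloring of $D(P'\setminus\{p\})$ by one new color on this star gives $\chi(D(P'))\le\chi(D(P'\setminus\{p\}))+1$, and taking maxima over $P'$ yields $d(n+1)\le d(n)+1$. Consequently it suffices to prove the single inequality $d(17)\le 14$. Here the Szekeres--Peters theorem~\cite{6gons} (Conjecture~\ref{con:ES} for $m=6$) guarantees that every $17$-point set in general position contains a convex hexagon $H$, and the core of the argument is to leverage this $H$ to build an explicit proper $14$-coloring of $D(P)$. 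The three long diagonals of $H$ pairwise cross and are therefore pairwise non-adjacent in $D(P)$, so they can be absorbed into color classes together with further pairwise-intersecting segments of $\pp$, with the aim of saving exactly one color relative to the generic $n-2$ bound.

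The main obstacle I anticipate is twofold. First, the six witnessing configurations for $n\in\{11,\ldots,16\}$ must both be found and have their chromatic numbers rigorously certified; since $\chi$ is NP-hard in general, the lower bound $\chi(D(P))\ge n-2$ will likely require a computer-assisted search over independent-set covers or an integer-programming certificate. Second, the upper bound $d(17)\le 14$ asks one to convert the purely existential statement \emph{there is a convex hexagon} into a constructive coloring, which will demand a case analysis according to how the remaining eleven points of $P$ lie relative to $H$, its sides, and its long diagonals. Once $d(17)\le 14$ is in hand, the monotonicity observation propagates the bound to all $n\ge 17$ for free, closing the `only if' direction and completing the proof.
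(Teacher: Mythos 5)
Your decomposition is sound and the easy parts are handled correctly: the upper bound $d(n)\le n-2$ for all $n$, the values $n\in\{3,\ldots,10\}$ via the known formulas for $\chi(D(C_n))$ and $\chi(D(C_{k,l}))$, and the reduction of the case $n\ge 17$ to a single inequality. Two remarks on that last part. Your monotonicity bound $d(n+1)\le d(n)+1$ is correct, but the inequality $d(17)\le 14$ that you flag as requiring ``a case analysis according to how the remaining eleven points lie relative to $H$'' is in fact immediate from the very star-extension you used to prove monotonicity: take a $3$-coloring of $D(H)$ for the convex hexagon $H$ guaranteed by Szekeres--Peters, then add the remaining $11$ points one at a time, assigning each a fresh color to the star of segments joining it to all previously considered points. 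This yields $3+11=14$ colors with no positional case analysis, and the same argument applied to any $n\ge 17$ gives $d(n)\le n-3$ directly, which is exactly what the paper does.

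The genuine gap is in part (i) for $n\in\{11,\ldots,16\}$, which is the entire substance of the theorem. Your plan there is ``find perturbed double chains and certify $\chi\ge n-2$ by computer-assisted search,'' but this is a statement of intent, not a proof: no configuration is exhibited and no certification method is given. The paper reports that the authors attempted exactly this computational route and abandoned it because the number of colorings to be ruled out grows exponentially; they even exhibit a set $\ss$ of more than a hundred segments of their $16$-point set $X$ such that removing any one of them leaves a $13$-colorable graph, illustrating how fragile the lower bound is. Instead they construct one explicit $16$-point set $X$ (a double chain $A\cup B\sim C_{5,5}$ augmented by two small triangles $T_1,T_2$) and prove $\chi(D(X))\ge 14$ by a long structural case analysis on the number of colors appearing on $A$, $B$, $T_1$, $T_2$, built on a hierarchy of lemmas about separable subsets and star apices. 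You also miss a simplification that makes a single witness suffice: if $\chi(D(P))=|P|-2$, then $\chi(D(P'))=|P'|-2$ for every $P'\subseteq P$ with $|P'|\ge 3$ (otherwise a cheap coloring of $D(P')$ would extend by stars to a cheap coloring of $D(P)$), so one $16$-point witness settles all of $n\in\{3,\ldots,16\}$ at once. Without the witness and its lower-bound certificate, the ``if'' direction for $n\ge 11$ remains unproved.
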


In view of the above discussion, it is clear that in order to show that $d(n)=n-2$ for $n\in \{3,4,\ldots ,16\}$, it suffices to exhibit an $n$-point set $P$ in general position in the plane
such that $\chi(D(P))=n-2$. The rest of the paper is devoted to this end, and in fact the heart of our proof is to show 
that any subset $X'$ of the $16$-point set $X$ given in Figure~\ref{fig:X}, satisfies the required inequality for each $|X'|\geq 3$, namely that $\chi(D(X'))\geq |X'|-2$.  

In the early stages of this work, we were trying to attack this problem by computer search, but soon we convinced ourselves that 
the number of colorings of $D(X)$ that must be considered grows exponentially with the number of vertices of $D(X)$. As we shall see later, a surprising fact 
that illustrates the computational complexity of this problem is the following: $D(X)$ has a subset $\ss$ with at least 100 vertices such that if $v\in \ss$, then $D(X)\setminus \{v\}$ can be colored with only 13 colors. 
    
The rest of the paper is organized as follows. In Section~\ref{s:X} we define the set $X$ by means of the precise coordinates of its $16$ points, and
give a brief discussion on its geometric properties. In Section~\ref{s:notation} we introduce some notation and terminology that we shall use
in most of the proofs. In Section~\ref{s:basic-facts} we present a summary of basic (or known) facts that we shall use in Section~\ref{s:lemmas}. The
 more technical work of this paper is given in Section~\ref{s:lemmas}, there we prove the main claims behind the proof of Theorem~\ref{thm:main}.
Finally, in Section~\ref{s:proof} we establish Theorem~\ref{thm:main} by combining in several ways the results
stated in previous sections. 

 \begin{figure}[t]
\centering
\includegraphics[width=0.6\textwidth]{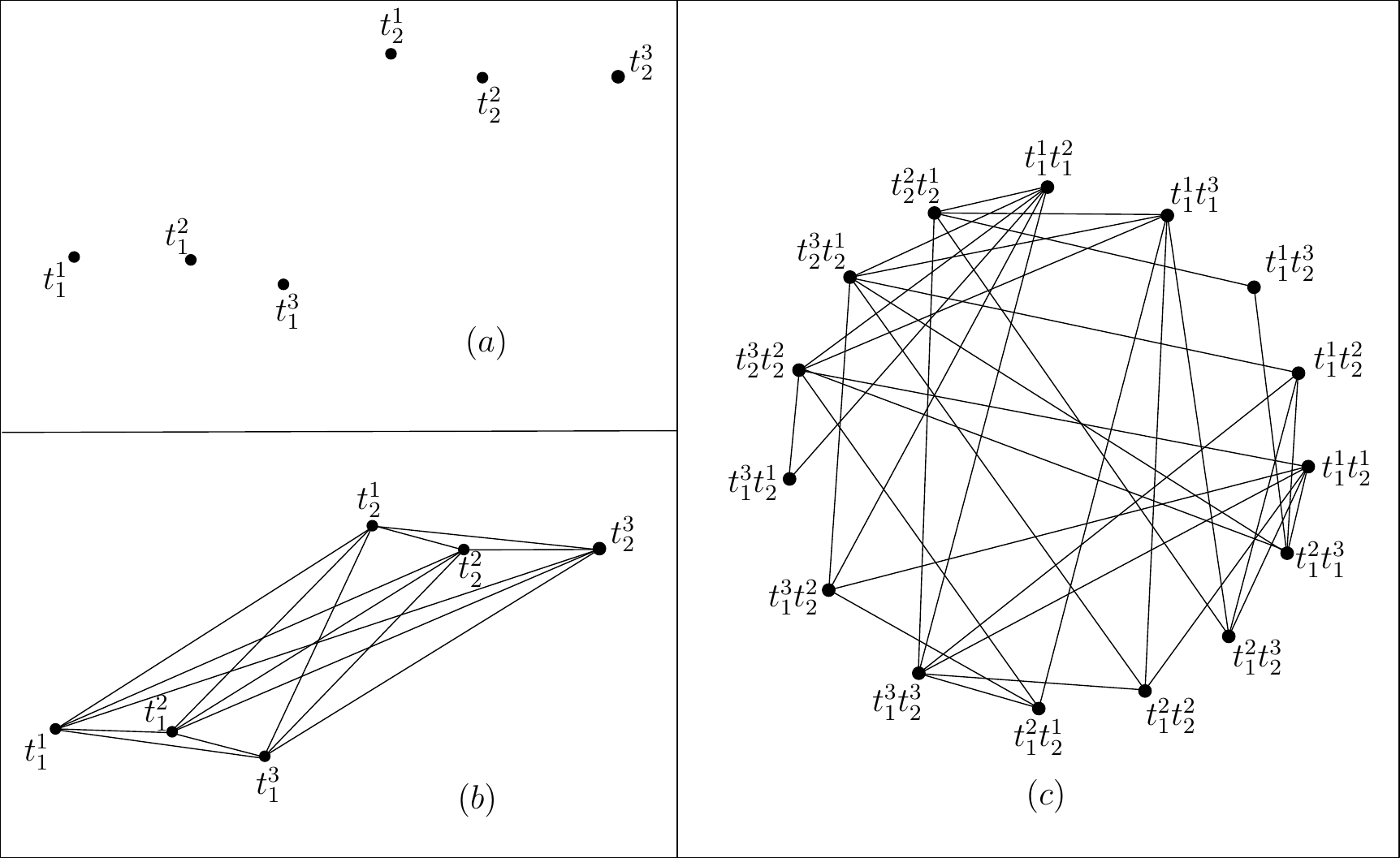}
\caption{\small ($a$) Let $T_1=\{t_1^1, t_1^2, t_1^3\}$ and $T_2=\{t_2^1, t_2^2, t_2^3,\}$. 
Then $T_1\cup T_2$ is a set of 6 points in general position in the plane. Each of $T_1$ and
$T_2$ is an instance of $C_3$, and $T_1\cup T_2$ is an instance of the double chain $C_{3,3}$. ($b$) This is the complete geometric graph $\tt_1\cup \tt_2$ induced by 
$T_1\cup T_2$. The graph  in ($c$) is the disjointness graph of segments  induced by 
$T_1\cup T_2$, and we shall denote it by $D(T_1\cup T_2)$.}
\label{fig:T1T2}
\end{figure}

\section{The $16$-point set $X$}\label{s:X}

We recall that if $P$ and $Q$ are two finite point sets in general position in the plane, then it is said they have the same {\em order type} iff there is a bijection
$f:P\to Q$ such that each ordered triple $abc$ in $P$ has the same orientation as its image $f(a)f(b)f(c)$.
We shall write $P\sim Q$ if $P$ and $Q$ have the same order type. In particular, it is well known (see for instance~\cite{magistral}) that if $P\sim Q$, then $D(P)$ and $D(Q)$ are isomorphic.

For the rest of the paper, $X$ denotes the $16$-point set given in Figure~\ref{fig:X} and we will refer to its points according to the labelling depicted there. In particular, we 
partition $X$ in the subsets $A, B, T_1, T_2$, where $A:=\{a_1, a_2, \ldots ,a_5\}$,  $B:=\{b_1, b_2, \ldots ,b_5\}$, 
$T_1:=\{t^1_1, t^2_1, t^3_1\}$ and $T_2:=\{t^1_2, t^2_2, t^3_2\}$. We recall that $\xx$ is the set of $\binom{16}{2}$ closed straight line segments
with endpoints in $X$.   
\begin{figure}[t]
\centering
\includegraphics[width=0.95\textwidth]{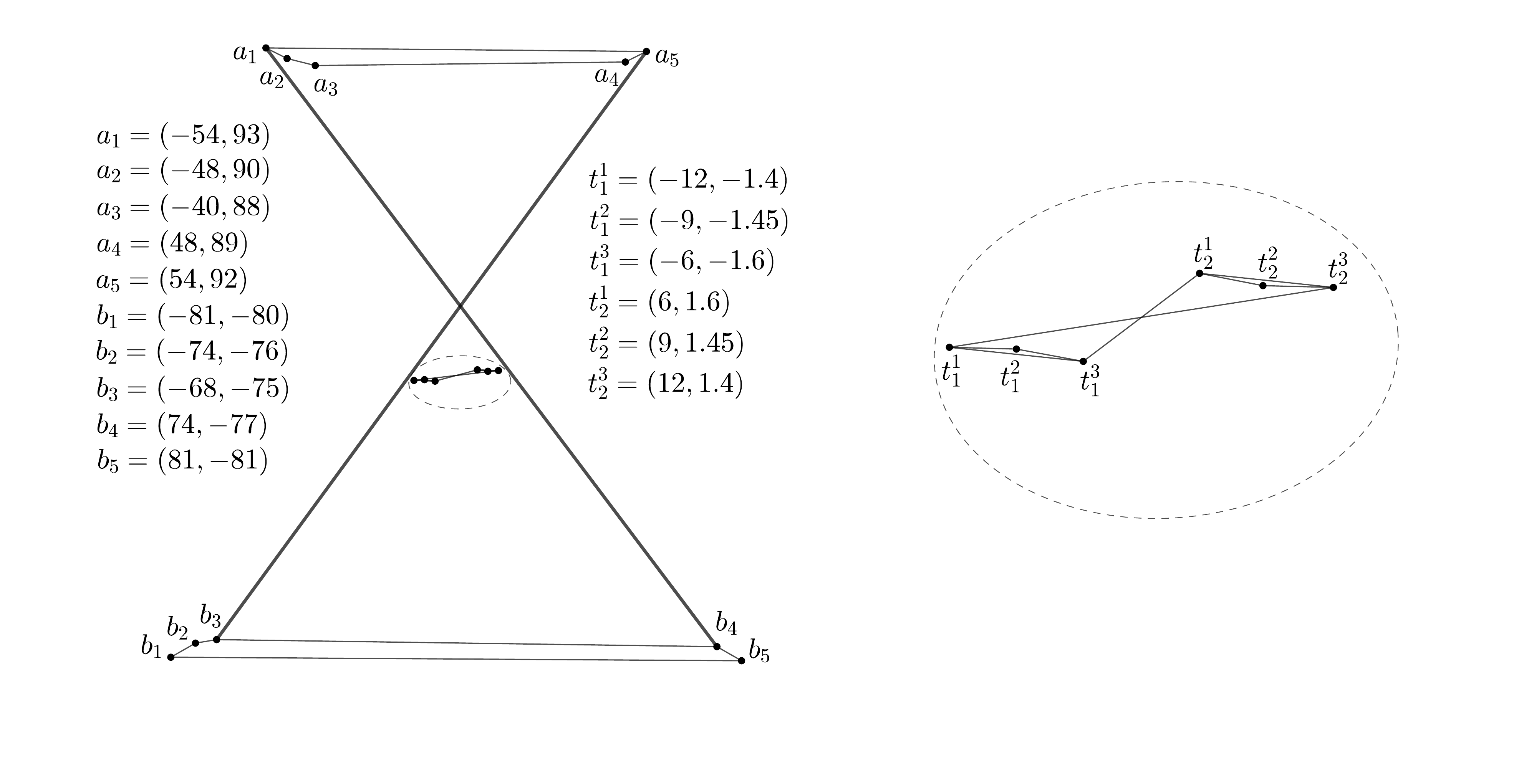}
\caption{\small  The $16$-point set defined by these coordinates is $X$. The ellipse on the right is a zoom of the ellipse on the left. We will derive the proof of Theorem~\ref{thm:main} by showing that $\chi(D(X'))=|X'|-2$ for each $X'\subseteq X$ with $|X'|\geq 3$. We remark that the $6$-point set 
$\{t_1^1, t_1^2, t_1^3, t_2^1, t_2^2, t_2^3\}$ is the same (order type) as those in Figure~\ref{fig:T1T2}($a$)-($b$).}
\label{fig:X}
\end{figure}

The following properties of $X$ are easy to check: 
\begin{itemize}
\item $X$ is $16$-point set in general position in the plane which has no $6-$points forming a convex hexagon, 
\item no segment of $\xx$ is neither horizontal nor vertical, 
\item each segment with endpoints in $T_i$ has negative slope for each $i\in \{1,2\}$,
\item $A\cup B \sim C_{5,5}$, $A\cup T_1 \sim C_{5,3}$,  $T_2\cup B \sim C_{3,5}$, $T_1\cup T_2 \sim C_{3,3}$ and $A\cup T_2\sim B\cup T_1$.  
\end{itemize}

From now on, we will use these properties of $X$ without explicit mention.  

For $e\in \xx$ we shall use $l(e)$ and $r(e)$ to denote the leftmost and the rightmost point of $e$, respectively. Since $\xx$ 
has no vertical segments, then $l(e)$ and $r(e)$ are well-defined for each $e\in \xx$. Then $l(e)$ and $r(e)$ are points of 
$X$ and are the endpoints of $e$. We remark that if $e$ has both endpoints in $T_i$ for some
$i\in \{1,2\}$, then $l(e)$ (resp. $r(e)$) is also the topmost (resp. lowest) point of $e$.

    
\section{Notational conventions and terminology}\label{s:notation}

Our aim in this section is to introduce some useful notational conventions, terminology and concepts that we shall use in the rest of the paper. 

Throughout this section, $P$ denotes a set of $n\geq 2$ points in general position in the plane. If $x$ and $y$ are distinct points of $P$,
then we shall use $xy$ to denote the closed straight line segment whose endpoints are $x$ and $y$. If $R$ and $S$  are disjoint nonempty subsets of $P$, then 
$R*S:=\{xy~:~x\in R \mbox{ and } y\in S\}$. We simply write $x*S$ (resp. $R*y$) whenever $R=\{x\}$ (resp. $S=\{y\}$). 

If $x,y,z$ are three distinct points of $P$, then we will refer to the triangle formed by them in any of the following ways:
$\Delta(x,y,z)$, $\Delta(xy,z)$, $\Delta(xy,xz)$, etc.

We shall denote by $\overline{P}$ the boundary of the convex hull of $P$.

If $Q\subseteq P$ with $|Q|\geq 2$, then  we shall use the font style $\qq$ to denote the set of all $\binom{|Q|}{2}$ closed straight 
line segments with endpoints in $Q$. We often make no distinction between the set of segments $\qq$ 
and the complete geometric graph that it induces.

\begin{note}\label{rem:edge-coloring} From the definition of $D(P)$ it follows that $\chi(D(P))$ is the minimum number of colors in an edge-coloring of the complete geometric graph $\pp$ in which any two edges belonging to the same chromatic class cross or are incident. We remark that all our proofs are given in terms of this kind of
 edge-colorings of $\pp$. Unless otherwise stated, from now on, any coloring of $D(P)$ must be assumed to be an edge-coloring of the complete geometric 
 graph $\pp$.    
 \end{note}
 
 We classify each chromatic class $c$ of a given coloring $\gamma$ of $\pp$ as either thrackle or star. We call $c$
  a {\em thrackle}  if at least two segments in $c$ cross each other, and otherwise $c$ is a {\em star}. If $c$ is a star whose
 segments are incident with exactly $m$ points of $P$, then $c$ is an {\em $m$--star}. Clearly, each $m$--star $c$ with $m\geq 3$ has a unique {\em apex}, i.e. a
 point of $P$ that is incident with all the segments of $c$. If $c$ is a $2$--star then $c$ consists of a singleton segment $e$, and in this case both ends of
 $e$ are considered apices of $c$. 

Let $\gamma$ be a coloring of $\pp$ and let $Q\subseteq P$. For exposition purposes, we abuse notation and use $\gamma(Q)$ to refer to the 
set of colors of $\gamma$ that are present in $\qq$. We note that the restriction $\gamma|_{\qq}$ of $\gamma$ to $\qq$ is a coloring of $\qq$. 
We let $\gamma^{\star}(Q)$ denote the number of points in $Q$ that are apices of some star of $\qq$ with respect to $\gamma|_{\qq}$.

For $Q\subseteq P$, let $\qq^+$ be the set  of all segments of $\pp$ that have no endpoints in $Q$ and cross a segment of $\qq$. 
We say that $Q$ is {\em separable with respect to} $\gamma$ whenever $\gamma(e)\notin \gamma(Q)$ for any $e\in \qq^+$. If $\qq^+=\emptyset$, then 
 $Q$ is a {\em separable} set of $P$. Clearly, if $Q$ is a separable set of $P$, then $Q$ is separable with respect to any $\gamma$. 
For example, each of  $A, B, T_1, T_2, A\cup B, T_1\cup T_2$ is a separable set of $X$.


\section{Basic facts}\label{s:basic-facts}

Our aim in this section is to prove some basic facts, which we will often use in the rest of the paper. 

Again, in this section $P$ denotes a set of $n\geq 3$ points in general position in the plane. We recall that if $\gamma$ is a coloring of $\pp$,
then $\gamma(P)$ is the set of colors used by $\gamma$ to color the segments of $\pp$. 
 
The next proposition is an immediate consequence of Lov\'asz's theorem, but here we give an algorithmic proof because we will use the resulting coloring several times. 


\begin{proposition}\label{p:n-2} $\chi(D(P))\leq |P|-2.$ 
\end{proposition}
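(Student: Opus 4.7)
The plan is to prove the bound by induction on $n := |P|$, producing along the way an explicit edge-coloring of the complete geometric graph $\pp$ in which every chromatic class is a star---by Note~\ref{rem:edge-coloring}, producing such a coloring with $n-2$ colors is exactly what is required to certify $\chi(D(P))\le n-2$. For the base case $n=3$, the three segments of $\pp$ are pairwise incident, so a single color (i.e., $n-2=1$ color) suffices, and the unique chromatic class is a $3$-star.

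For the inductive step with $n\ge 4$, I would pick a vertex $p$ of the convex hull $\overline{P}$ and assign color $1$ to every segment in $p*(P\setminus\{p\})$. Those $n-1$ segments are pairwise incident at $p$, so they form an $n$-star with apex $p$ and constitute a legitimate chromatic class. By the inductive hypothesis applied to $P\setminus\{p\}$, the complete geometric graph on $P\setminus\{p\}$ admits an edge-coloring with $n-3$ colors in which each class is a star; since none of those segments is incident to $p$, we may take that palette disjoint from color $1$. Concatenating gives a valid edge-coloring of $\pp$ with $1+(n-3)=n-2$ colors, each class a star, as desired.

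The argument is a straightforward constructive induction, so I do not anticipate any genuine mathematical obstacle. The one deliberate design choice worth flagging is to take $p$ as a vertex of $\overline{P}$ rather than an arbitrary point of $P$. This is not needed to establish the numerical bound itself, but it produces a canonical ``convex-hull peeling'' coloring in which every chromatic class is a star whose apex is a convex-hull vertex of some nested subset of $P$. This structural refinement is what I expect the authors to reuse later---in particular when combined with separable subsets of $X$ such as $A$, $B$, $T_1$, $T_2$, $A\cup B$ and $T_1\cup T_2$ in Sections~\ref{s:lemmas} and~\ref{s:proof}---since the separability hypothesis together with the fact that every class is a centered star controls how colors from the subconfiguration interact with the crossings coming from $\xx$ outside it.
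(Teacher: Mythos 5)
Your proof is correct and is essentially the paper's own argument in inductive clothing: the paper orders the points arbitrarily, colors the triangle on the first three points with one color, and then gives each further point its own star, which is exactly your construction unwound. Your restriction of $p$ to a vertex of $\overline{P}$ is harmless but unnecessary (pairwise incidence at the apex already makes each star a valid chromatic class, so any ordering works), and indeed the paper reuses the construction with arbitrary labelings rather than hull peeling.
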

\begin{proof} Let $p_1, p_2, \ldots , p_{n}$ be any labeling of the points of $P$. Color each segment of the triangle
 $\Delta(p_1, p_2, p_3)$ with color $c_1$, and for each $j\in \{4,5,\ldots ,n\}$ use $c_j$ to color each segment in $\{p_jp_i~:~i=1,2,\ldots ,j-1\}$. Since this 
defines a proper coloring of $\pp$ with $n-2$ colors, we are done. 
\end{proof}


Proposition~\ref{p:many} is not used in the rest of the paper, but it is relevant because illustrates the tightness of Theorem~\ref{thm:main}, and at the same time 
the difficulty of proving it by cumputer search.  

\begin{proposition}\label{p:many} Let $\ss:=\{t_1^1t_2^3, b_1 t_2^3, b_1 t_2^2, b_1 t_2^1, b_1 a_5, b_2 a_5, b_3 a_1, b_3 a_2, b_3 a_3, b_3 a_4, b_3 a_5, b_4 a_1, b_4 a_2, b_4 a_3, b_4 a_4\-, b_4 a_5, b_5 t_1^2, b_5 t_1^1, b_5 t_2^1, b_5 a_1\}$. For $x_1x_2\in \xx \setminus \ss$, there are many ways to color $D(X)$ with 14 colors in which
the color of $x_1x_2$ is not assigned to any other segment of $\xx$.  
\end{proposition}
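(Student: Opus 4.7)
The plan is to reformulate the claim as follows: producing a 14-coloring of $D(X)$ in which $x_1x_2$ is the sole member of some color class is equivalent, up to renaming the extra color, to producing a proper 13-coloring of $D(X)\setminus\{x_1x_2\}$ and then assigning a fresh 14th color to $x_1x_2$ alone. So for each of the $100$ segments $x_1x_2\in\xx\setminus\ss$ it suffices to exhibit such a 13-coloring, and to argue that several inequivalent choices are available.

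First I would build a reference 14-coloring of $D(X)$ that takes full advantage of the separable decomposition $X=A\cup B\cup T_1\cup T_2$ recorded in Section~\ref{s:X}. Since $A\cup B$ and $T_1\cup T_2$ are separable sets of $X$, their internal segments may be colored independently, contributing $\chi(D(C_{5,5}))=8$ and $\chi(D(C_{3,3}))=4$ colors respectively via the formulas of Fabila-Monroy et al.\ quoted in Section~\ref{sec:intro}. The remaining bipartite segments of $(A\cup B)*(T_1\cup T_2)$ would then be folded into this palette with at most two extra colors by organizing them as stars at convenient apices in $T_1\cup T_2$, using the fact that every segment with endpoints in $T_1\cup T_2$ has negative slope while $A\cup B$ sits on the opposite side of each $T_i$. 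From such a reference coloring, a target segment $x_1x_2\in\xx\setminus\ss$ can be singled out by a local recoloring: locate an existing chromatic class $c$ all of whose segments cross or are incident with $x_1x_2$, transfer $x_1x_2$ into $c$, and then redistribute the segment (if any) that previously shared the color of $x_1x_2$ among neighbouring star classes apexed at its endpoints. This yields one valid 14-coloring in which $x_1x_2$ is singleton, and the many legitimate choices of $c$ (and of the internal star structure inside each separable block) are exactly what produces the ``many ways'' clause.

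The main obstacle is certifying that the local-merging procedure succeeds for \emph{every} $x_1x_2\in\xx\setminus\ss$ and fails precisely on the $20$ segments of $\ss$. The exceptional set $\ss$ is heavily concentrated on segments incident with $b_3$, $b_4$, $b_5$, or running between $B$ and $T_2$, which hints that the non 13-colorability of $D(X)\setminus\{v\}$ for $v\in\ss$ is forced by unavoidable bottlenecks at these peripheral vertices; isolating the exact structural reason would require a case analysis essentially mirroring the one in Section~\ref{s:lemmas}. Since Proposition~\ref{p:many} is stated only to illustrate the computational intractability of a brute-force attack on Theorem~\ref{thm:main} and is not used later, the natural level of proof is to exhibit an explicit witness 13-coloring for each of the $100$ removable segments (tabulated or verified by computer) rather than to give an independent structural argument, with the reference-coloring construction above serving as the backbone that generates all of these witnesses uniformly.
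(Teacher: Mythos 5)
There is a genuine gap: you never actually produce the required coloring. Your ``reference coloring'' ($8$ colors inside $A\cup B$, $4$ inside $T_1\cup T_2$, and the claim that all $60$ segments of $(A\cup B)*(T_1\cup T_2)$ can be absorbed with only two extra classes) is asserted, not verified, and the subsequent ``local recoloring'' step is both unverified and described backwards --- transferring $x_1x_2$ \emph{into} an existing nonsingleton class $c$ does not isolate its color; you would need to empty the class of $x_1x_2$ of everything else. You then concede that certifying the procedure for each of the $100$ segments would require a case analysis or a computer check, i.e.\ the proof is deferred rather than given. (You also take on an obligation the statement does not impose: nothing needs to be shown to \emph{fail} on the segments of $\ss$.)

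The idea you are missing is much lighter than the machinery you set up. For every $x_1x_2\in\xx\setminus\ss$ one can find a convex $5$-gon $X_5\subset X$ having $x_1x_2$ as a \emph{side} (not a diagonal). Since $\chi(D(C_5))=3$, the ten segments of $\xx_5$ admit a proper $3$-coloring in which $x_1x_2$ is alone in its class (a singleton side plus two thrackles covering the remaining nine segments; this is exactly where ``side, not diagonal'' is used, and there are precisely two such colorings). Extending as in Proposition~\ref{p:n-2} by adding one new star with a fresh color for each of the $11$ points of $X\setminus X_5$ yields a $14$-coloring in which the color of $x_1x_2$ appears nowhere else, and the freedom in ordering those $11$ points (together with the two pentagon colorings) accounts for the ``many ways.'' Your decomposition-based route could in principle be made to work, but as written it proves nothing without the tabulated witnesses you defer to.
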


\begin{proof} Let $x_1x_2\in \xx \setminus \ss$. 
It is not hard to see that $x_1x_2$ is a side (no diagonal) of some convex $5$-gon $X_5\subset X$. Similarly, 
it is easy to find a valid $3$-coloring $\gamma'$ of the 10 segments of $\xx_5$ in such a way that the color $\gamma'(x_1x_2)$ 
does not appear in any other segment of $\xx_5$. In fact, there are exactly 2 ways to do this.  
Let $x_6, x_7, \ldots , x_{16}$ be any labeling of the points of $X\setminus X_5$. Following the same argument as in the proof of Proposition~\ref{p:n-2}, 
we can extend $\gamma'$ to a  coloring $\gamma$ of $D(X)$ by adding a new star $S_i$ of color $c_i$ with 
apex $x_i$ for each $x_i\in X\setminus X_5$.  
\end{proof}


\begin{proposition}\label{p:closed}  
Let $P'$ be a proper subset of $P$, and 
 let $\gamma$ be an optimal coloring of $D(P)$. Then the following hold:
\begin{itemize}
\item[(i)] If $|P'|\geq 3$ and $\chi(D(P))=|P|-2$, then $\chi(D(P'))=|P'|-2$.
\item[(ii)] If $P'$ is separable with respect to $\gamma$ and $|\gamma(P')|=|P'|,$
then $\chi(D(P)) \geq \chi(D(P\setminus P')) + |P'|$.
\item[(iii)] If $S_1,\ldots , S_r$ are different stars of $\gamma$ with apices $v_1, \ldots ,v_r$, respectively. Then 
 $\chi(D(P\setminus \{v_1, \ldots , v_r\}))=\chi(D(P))-r$.
\end{itemize}

\end{proposition}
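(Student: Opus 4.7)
The plan is to establish the three items in order, each via a short argument based on Proposition~\ref{p:n-2} and the simple principle that any set of segments incident at a common point may be packed into a single chromatic class.

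For~(i), I would first observe that removing one point can decrease $\chi(D(\cdot))$ by at most $1$: given any coloring of $D(P\setminus\{p\})$, assigning one fresh color to the $|P|-1$ segments at $p$ (pairwise incident at $p$) yields a valid coloring of $D(P)$, so $\chi(D(P))\leq\chi(D(P\setminus\{p\}))+1$. Iterating this single-point removal along any chain $P \supsetneq P_1 \supsetneq \cdots \supsetneq P'$ gives $\chi(D(P'))\geq \chi(D(P))-(|P|-|P'|)=|P'|-2$, while Proposition~\ref{p:n-2} provides the matching upper bound.

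For~(ii), the key observation is that separability together with the hypothesis $|\gamma(P')|=|P'|$ forces $\gamma(P')$ to be disjoint from the set of colors that $\gamma$ uses on segments having both endpoints in $P\setminus P'$. Indeed, if some $c\in\gamma(P')$ appeared on a segment $f$ whose endpoints both lie in $P\setminus P'$, then the $c$-class would contain $f$ together with some $e'\in\pp'$; since $\gamma$ is a valid coloring of $D(P)$, the segments $f$ and $e'$ must cross or share an endpoint, and sharing an endpoint is impossible because $f$ has no endpoint in $P'$. So $f$ crosses $e'$, placing $f$ in the set of segments without endpoints in $P'$ that cross some segment of $\pp'$, contradicting the separability hypothesis. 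Consequently, the restriction of $\gamma$ to the segments spanned by $P\setminus P'$ is a valid coloring of $D(P\setminus P')$ whose color set is disjoint from $\gamma(P')$, yielding $\chi(D(P))=|\gamma(P)|\geq |\gamma(P')|+\chi(D(P\setminus P'))=|P'|+\chi(D(P\setminus P'))$.

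For~(iii), I would prove both inequalities. The upper bound $\chi(D(P\setminus\{v_1,\ldots,v_r\}))\leq\chi(D(P))-r$ follows from restricting $\gamma$: every segment of the star $S_i$ is incident to $v_i$, so all $r$ colors of $S_1,\ldots,S_r$ vanish once these apices are removed, and the restriction remains a valid coloring. For the reverse inequality, I would take an optimal coloring of $D(P\setminus\{v_1,\ldots,v_r\})$ and reinsert $v_1,\ldots,v_r$ one at a time, coloring the new segments at each $v_i$ with one fresh color (they pairwise meet at $v_i$); this produces a coloring of $D(P)$ with $\chi(D(P\setminus\{v_1,\ldots,v_r\}))+r$ colors. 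The only step of genuine substance is the disjointness argument in~(ii); parts~(i) and~(iii) are essentially immediate consequences of the star-coloring trick, whereas~(ii) is the one place where separability and the quantitative hypothesis $|\gamma(P')|=|P'|$ must be coordinated, and is therefore the step most likely to require care in the write-up.
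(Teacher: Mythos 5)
Your proposal is correct and follows essentially the same route as the paper: parts~(i) and~(iii) via the star-extension trick from Proposition~\ref{p:n-2}, and part~(ii) via disjointness of $\gamma(P')$ and $\gamma(P\setminus P')$. The only (welcome) difference is cosmetic: you phrase~(i) as iterated single-point removal instead of a contradiction, and you spell out the cross-or-incident argument behind the color-set disjointness that the paper merely asserts.
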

\begin{proof} By Proposition~\ref{p:n-2}, in order to show $(i)$ it suffices to show that $\chi(D(P'))\geq |P'|-2$. Seeking a contradiction, suppose that there exists a proper 
coloring $\beta'$ of $D(P')$ such that $|\beta'(P')|<|P'|-2$.  Then, following the same argument as in the proof of Proposition~\ref{p:n-2}, we can extend $\beta'$ to a 
 coloring $\beta$ of $D(P)$ by adding a new star $S_i$ of color $c_i$ with apex $p_i$ for each $p_i\in P\setminus P'$. 
Then the number of colors of $\beta$ is less than $|P'|-2+|P\setminus P'|=|P|-2$, contradicting the hypothesis that $\chi(D(P))=|P|-2$.

Let $P'$ and $\gamma$ be as in~$(ii)$. Since $P'$ is a separable subset of $P$ with respect to $\gamma$, then $\gamma(P')$ and $\gamma(P\setminus P')$ are disjoint subsets
of $\gamma(P)$ and so $\chi(P) = |\gamma(P)|\geq |\gamma(P\setminus P')|+ |\gamma(P')|\geq \chi(P\setminus P') + |P'|$. This proves $(ii)$.

For $i=1,\ldots ,r$,  let $S_i$ and $v_i$ be as in~$(iii)$ and let $Q=P\setminus \{v_1, \ldots , v_r\}$. Since the restriction of $\gamma$ to $D(Q)$
is a  coloring of $D(Q)$ with at most $\chi(D(P))-r$ colors, then $\chi(D(Q))\leq \chi(D(P))-r$. On the other hand, if $\chi(D(Q))<\chi(D(P))-r$ then
we can proceed as in the proof of $(i)$ to obtain a 
 coloring $\beta$ of $D(P)$ with less than $\chi(D(Q))+r$ colors, which is a contradiction.  
\end{proof}

\begin{proposition}\label{p:5apices} Let $\gamma$ be a 4-coloring of $D(C_5)$. If  $\gamma^{\star}(C_5)=5$, then $C_5$ has two points $p$ and $q$
such that $\{pq\}$ is a $2-$star of $\gamma$ and neither $p$ nor $q$ is an apex of any other star of $\gamma$. 
\end{proposition}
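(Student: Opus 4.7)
The plan is to prove the proposition via a finite case analysis on the structure of the four color classes of $\gamma$. First, I would classify each class as one of the following: (I) a $2$-star $\{pq\}$ (a single segment, with apices $p$ and $q$); (II) a $K_{1,s}$-star with common apex $v$ and $2 \leq s \leq 4$ (one apex); (III) a triangle $\{ab, bc, ca\}$ (three edges on three points, with no apex, since no point is incident to every segment); or (IV) a thrackle (no apex, because the two crossing segments of a thrackle share no endpoint). Writing $a, b, c, d$ for the respective counts, I would combine $a + b + c + d = 4$ with the bound $\gamma^{\star}(C_5) \leq 2a + b$. The hypothesis $\gamma^{\star}(C_5) = 5$ then forces $2a + b \geq 5$ (so in particular $a \geq 1$), and when equality $2a + b = 5$ holds every vertex of $C_5$ is an apex of exactly one star of $\gamma$.

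Next I would use the edge-count identity $10 = a + \sum_{\mathrm{II}} s + 3c + \sum_{\mathrm{IV}} t$, with $s \leq 4$ (maximum degree in $K_5$) and thrackle size $t \leq 5$ (the largest thrackle in $K_5$ on five convex points uses the five diagonals), to narrow the feasible parameter vectors to just $(a, b, c, d) = (1, 3, 0, 0)$ with multi-star sizes $(4, 3, 2)$ or $(3, 3, 3)$, and $(a, b, c, d) = (2, 1, 0, 1)$ with $(s, t) \in \{(3, 5), (4, 4)\}$. The configuration $(2, 2, 0, 0)$ is excluded at once: it would require two size-$4$ stars at distinct apices $u, v$, both of which would have to contain the single edge $uv$. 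All other parameter vectors fail the edge count.

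To discard $(a, b, c, d) = (2, 1, 0, 1)$, I would exploit that $2a + b = 5$ forces the multi-star apex $v$ to be different from all four endpoints of the two $2$-stars, so those $2$-stars form a perfect matching of the four convex points in $C_5 \setminus \{v\}$. The thrackle would then be the union of the four non-matching edges of the convex $K_4$ on $C_5 \setminus \{v\}$ together with the $4 - s$ edges at $v$ left over from the multi-star. The core observation is that on four convex points the complement of any perfect matching in $K_4$ is a $4$-cycle containing two opposite sides of the convex quadrilateral --- a pair of segments that shares no endpoint and does not cross (only the two diagonals of the quadrilateral cross). Hence the supposed thrackle would contain a disjoint pair, violating the independent-set condition on a color class, and this rules out both $(s, t) = (3, 5)$ and $(s, t) = (4, 4)$ simultaneously.

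Thus $(a, b, c, d) = (1, 3, 0, 0)$ is the only possibility. Denote the unique $2$-star by $\{pq\}$ and the apices of the three multi-edge stars by $r_1, r_2, r_3$. Since $2a + b = 5 = \gamma^{\star}(C_5)$, the five points $p, q, r_1, r_2, r_3$ are distinct and exhaust $C_5$; in particular $\{p, q\} \cap \{r_1, r_2, r_3\} = \emptyset$, so neither $p$ nor $q$ is an apex of any other star of $\gamma$, as required. The main obstacle I expect is precisely this rule-out of the thrackle case $(2, 1, 0, 1)$: the underlying convex-$K_4$ observation is elementary, but one must apply it uniformly to both $(s, t)$ subcases and verify that the extra edge at $v$ available in the $(3, 5)$ subcase cannot repair the disjoint pair already present among the four $K_4$-edges.
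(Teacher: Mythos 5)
Your proposal is correct, and it reaches the conclusion by a genuinely different route than the paper. You both start from the same two counting facts (the edge count $|\mathcal{C}_5|=10$ against the maximum class size, and the apex count, which in your notation is $2a+b\geq 5$ and in the paper appears as ``at most two $2$-stars'' and ``at least one $2$-star''). From there the paper argues locally: it fixes the $2$-star $\{pq\}$, supposes some other star has apex in $\{p,q\}$, shows that all four classes would then be stars, and exhibits a segment of $\mathcal{C}_5$ that no class can absorb. You instead enumerate globally all feasible type vectors $(a,b,c,d)$, which the two constraints reduce to $(2,2,0,0)$, $(2,1,0,1)$ and $(1,3,0,0)$; the first dies because two $4$-edge stars at distinct apices would both claim the edge joining the apices, and the second dies by your convex-$K_4$ observation that the complement of any perfect matching on four convex points contains two opposite sides, a disjoint pair that is forced into the thrackle class because none of its members is incident with the multi-star apex $v$. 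Your enumeration is airtight (I checked that every other vector with $2a+b\geq 5$ fails the edge count, and that the disjoint-pair argument is independent of the split $(s,t)\in\{(3,5),(4,4)\}$, as you note). What your approach buys is transparency: it makes explicit which class structures are a priori compatible with $\gamma^{\star}(C_5)=5$ and isolates the single configuration in which a thrackle class could occur, whereas the paper's contradictions never need to mention thrackles at all. What the paper's approach buys is brevity: it avoids the geometric input entirely, since its uncolorable-segment arguments are purely combinatorial on the incidence structure of the stars. One cosmetic remark: a thrackle contributes nothing to $\gamma^{\star}$ simply because apices are defined only for stars, not because its crossing pair shares no endpoint; this does not affect your bound $\gamma^{\star}(C_5)\leq 2a+b$.
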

\begin{proof}
We start by noting that any chromatic class of $\gamma$ is formed by at most $5$ segments of $\mc C_5$. From $|\mc C_5|=10$ and the hypothesis that $\gamma$ 
has 4 chromatic classes it follows that $\gamma$ has at most two $2-$stars. On the other hand, since each point of $C_5$ is an apex of a 
star of $\gamma$, then $\gamma$ must contain at least one $2-$star, say $S=\{pq\}$. 
We now show that if $S'$ is another star of $\gamma$ with apex $v\in \{p,q\}$, 
then $\mc C_5$ has a segment that cannot be colored by $\gamma$, contradicting the assumption that $\gamma$ is a coloring of $D(C_5)$.
 
Suppose first that $S'$ is a $2-$star. Then, $S'=\{vw\}$ for some 
$w\in C_5\setminus \{p,q\}$. Let $x$ and $y$ be the two points in $C_5\setminus \{p,q,w\}$. By hypothesis, for each $z\in \{x,y\}$, $\gamma$ has a star $S_z$ 
with apex $z$. As $S$ and $S'$ are the only $2-$stars of $\gamma$, then $S_x\neq S_y$, and so $S, S', S_x,$ and $S_y$ are the 4 chromatic classes of 
$\gamma$. Since each of them is a star, then the segment joining the two points in $C_5\setminus \{x,y,v\}$ cannot be colored by $\gamma$.
 
Suppose now that $S'$ is a
star and let $z_1,z_2,$ and $z_{3}$ be the points of $C_5\setminus \{p,q\}$. Again, for each $z_i$ 
we know that $\gamma$ has a star $S_{z_i}$ with apex $z_{i}$. Since $\gamma$ has exactly 4 chromatic classes, then two of $S_{z_1}, S_{z_2}, S_{z_3}$ 
must be the same. W.l.o.g. we asume $S_{z_1}=S_{z_2}$. Then $S_{z_1}$ must be a $2-$star, and so 
$S, S', S_{z_1},$ and $S_{z_3}$ are the 4 chromatic classes of $\gamma$. Since each of them is a star, then the segment joining the
 two points in $C_5\setminus \{v,z_1,z_3\}$ cannot be colored by $\gamma$.
\end{proof}

\begin{proposition}\label{p:gamma(A)=3} 
If $\gamma$ is a 3-coloring of $D(C_5)$, then $\gamma^{\star}(C_5)\leq 2$. 
\end{proposition}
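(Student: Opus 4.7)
The plan is to argue by contradiction: assume $\gamma$ is a 3-coloring of $D(C_5)$ with $\gamma^{\star}(C_5)\geq 3$. Each color class of $\gamma$ is either a thrackle (contributing no apex) or a star (no two segments cross). A star contributes $2$ apices if it is a $2$-star (a single segment), $1$ apex if it has $\geq 2$ segments sharing a common vertex, and $0$ apices if it is a triangle of three pairwise-incident segments on three distinct vertices (so no point is incident with all three). My first step will be to record the basic size bounds: any star class contains at most $4$ segments (four pairwise-incident edges in a simple graph must share a common vertex, and each point of $C_5$ has degree $4$ in $\mc C_5$), and any thrackle contains at most $5$ segments (attained, e.g., by the five diagonals of $C_5$).

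The key auxiliary observation concerns the residual $K_4$ obtained from $C_5$ by removing a single vertex. Its six segments consist of the four sides plus the two crossing diagonals of a convex quadrilateral, and its only disjoint edge pairs are the two pairs of opposite sides. Consequently, every $5$-edge subset of this $K_4$ contains at least one disjoint pair, so no valid color class of $D(C_5)$ restricted to such a residual $K_4$ has more than $4$ segments.

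The argument then splits on the number of thrackle classes. When all three classes are stars, their sizes sum to $10$ with each at most $4$, so the multiset of sizes is $\{4,4,2\}$ or $\{4,3,3\}$. In the $\{4,4,2\}$ case the two size-$4$ stars must be full stars at distinct apices $a,b$ and both would contain the edge $ab$, impossible. In the $\{4,3,3\}$ case the size-$4$ class is a full star at some apex $a_1$, leaving a $K_4$ on the other four points partitioned into two size-$3$ classes; reaching three apices forces both residual classes to be true stars, hence full stars at two distinct apices in $K_4$, which again would share their connecting edge. If either residual class is a triangle, the apex count drops to at most $2$. When exactly one class is a thrackle, reaching three apices forces a $2$-star together with a true star whose apex differs from both endpoints of the $2$-star; with sizes $(1,t,k)$ satisfying $t\leq 4$ and $k\leq 5$, the only feasible breakdown is $(1,4,5)$, so the thrackle would be a $5$-segment class inside the residual $K_4$, contradicting the auxiliary observation. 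When at least two classes are thrackles, only one star remains, yielding at most $2$ apices.

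The main obstacle will be carefully handling the triangle configuration, which qualifies as a star under the paper's dichotomy yet contributes no apex and so must be distinguished from true stars throughout the case split. The second subtle piece is the residual-$K_4$ bound, whose proof hinges on locating exactly two disjoint pairs of opposite sides in the convex sub-quadrilateral.
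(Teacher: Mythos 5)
Your proof is correct, but it takes a genuinely different route from the paper's. The paper works directly on the boundary cycle $\overline{C_5}$: no colour can occupy more than two of the five hull edges, so all three colours appear there with multiplicities $1,2,2$ and each repeated colour sits on two consecutive hull edges; after normalizing that boundary colouring, a short split on whether the singly-coloured hull edge is a $2$-star forces the colours of a few diagonals and kills all but two potential apices. You instead classify the three classes by size and by the thrackle/star dichotomy: the bounds (star $\le 4$, any class $\le 5$) reduce everything to the size multisets $\{4,4,2\}$, $\{4,3,3\}$ and $(1,4,5)$, each of which is eliminated either because two full stars would have to share their connecting edge or by your residual-$K_4$ observation that every $5$-edge subset of the $K_4$ on four convex points contains a disjoint pair. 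Your version is longer but buys two things: it avoids any normalization of the hull colouring, and it treats the triangle-shaped class explicitly --- a configuration that is a ``star'' under the paper's definition yet has no apex, and which the paper's intermediate claim ``each star of $\gamma$ has a unique apex'' silently glosses over (this does not endanger the paper's conclusion, since an apexless class only lowers $\gamma^{\star}$, but your handling is the more careful one). Two minor remarks: the bound of $5$ segments per thrackle is asserted rather than proved --- it is true (the independence number of $D(C_5)$ is $5$) and the paper makes the identical unproved assertion in the proof of Proposition~\ref{p:5apices}, so you are on equal footing there; and in the one-thrackle case it would be cleaner to note explicitly that the alternative of two $2$-stars corresponds to sizes $(1,1,8)$ and is excluded by the same size bound.
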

\begin{proof} Let $v_1, v_2,\ldots ,v_5$ be the points of $C_5$, so that they appear in this cyclic order in $\overline{C_5}$. Since each color of $\gamma$
is in at most two segments of $\overline{C_5}$, then the 3 colors $c_1, c_2, c_3$ of $\gamma$ appear in $\overline{C_5}$. w.l.o.g. assume
that $\gamma(v_1v_2)=c_1$, $\gamma(v_2v_3)=\gamma(v_3v_4)=c_2$ and $\gamma(v_4v_5)=\gamma(v_5v_1)=c_3$.

Suppose first that $v_1v_2$ is a $2-$star of $\gamma$. Then, $\gamma(v_2v_4)=c_2$ and  $\gamma(v_4v_1)=c_3$, and so none of $v_3, v_4, v_5$ can be
an apex of $\gamma$, implying that $\gamma^{\star}(C_5)\leq 2$.  
If $v_1v_2$ is not a $2-$star of $\gamma$, then each colour of $\gamma$ appears in at least two segments of $\mc C_5$, and so each star
of $\gamma$ has a unique apex. Since if the 3 chromatic classes of $\gamma$ are stars, then some of $v_1v_4$ or $v_2v_4$ 
cannot be colored by $\gamma$, we can assume that $\gamma$ has at most two stars, and hence $\gamma^{\star}(C_5)\leq 2$. 
 \end{proof}
   
\begin{thm}[Theorem 1 in~\cite{mario}]\label{thm:mario} If $l, k\in {\mathbb Z}^+$ and $l\geq \max\{3,k\}$, then 
$$\chi(D(C_{k,l}))=k+l-\left\lfloor \sqrt{2l+\frac{1}{4}} - \frac{1}{2}\right\rfloor.$$
\end{thm}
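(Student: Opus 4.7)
Set $m := \left\lfloor \sqrt{2l+\tfrac14}-\tfrac12\right\rfloor$, so that $m$ is the largest non-negative integer with $\binom{m+1}{2}\le l$. My plan is to prove the two inequalities $\chi(D(C_{k,l}))\le k+l-m$ and $\chi(D(C_{k,l}))\ge k+l-m$ separately.

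For the upper bound, my strategy is to piggy-back on the convex-position result $\chi(D(C_l))=l-m$ of~\cite{Cn}. First I fix any optimal coloring of $D(C_l)$ on the $l$-point upper chain, which paints all segments of $\mc C_l$ with $l-m$ colors. Then I extend it to the whole double chain by appending $k$ additional stars, one per lower point. Concretely, I label $C_k=\{p_1,\ldots,p_k\}$ arbitrarily and, for each $i=1,\ldots,k$, I introduce a fresh color $c_i$ and paint with it every segment of the form $p_ix$ with $x\in C_l\cup\{p_{i+1},\ldots,p_k\}$. Every $c_i$-class is a star with apex $p_i$, hence a valid chromatic class by Note~\ref{rem:edge-coloring}. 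Together with the convex coloring this covers all $\binom{k+l}{2}$ segments of $\mc C_{k,l}$ and uses exactly $(l-m)+k$ colors.

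For the lower bound, I would argue by induction on $k$. The base case $k=0$ is precisely the convex-position theorem. For the inductive step, given an optimal coloring $\gamma$ of $D(C_{k,l})$, the restriction of $\gamma$ to $\mc C_l$ is itself a valid coloring of $D(C_l)$, so $|\gamma(C_l)|\ge l-m$. It therefore suffices to produce $k$ further colors that do not appear on $\mc C_l$; equivalently, to exhibit a lower point $p$ which is the apex of some star of $\gamma$ whose color is used nowhere else, so that Proposition~\ref{p:closed}(iii) legitimately reduces the problem to $D(C_{k-1,l})$ and the inductive hypothesis closes the argument. A natural choice of $p$ is the topmost point of $C_k$: any segment $pq$ with $q\in C_l$ can only cross segments having an endpoint in $C_k$, which should give enough geometric leverage to identify the desired private color.

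The hard part is the inductive step of the lower bound. Separability of $C_l$ from $C_k$ (which follows from the double-chain property that no upper-upper segment crosses any lower-lower segment) is not by itself strong enough to invoke Proposition~\ref{p:closed}(ii) directly, because $|\gamma(C_k)|$ may be strictly less than $k$ -- the induced subgraph $D(C_k)$ is a convex disjointness graph whose chromatic number is below $k$ for $k\ge 3$. Consequently the argument must carefully track the colors used on the mixed segments $C_k*C_l$ and combine apex-counting in the spirit of Proposition~\ref{p:5apices} and Proposition~\ref{p:gamma(A)=3} with the geometry of how cross-chain segments interleave with the upper-chain convex coloring. This interleaving analysis, together with the role of the hypothesis $l\ge\max\{3,k\}$ (which guarantees enough room in the upper chain to force $k$ distinct new colors from the lower points), is where I expect the main technical work of~\cite{mario} to reside.
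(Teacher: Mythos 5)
First, note that the paper does not prove this statement at all: it is imported verbatim as Theorem~1 of~\cite{mario}, so there is no internal proof to compare against. Judged on its own terms, your upper bound is correct and complete: restricting an optimal $(l-m)$-coloring of $D(C_l)$ from~\cite{Cn} to the upper chain remains proper inside the double chain, and the $k$ added classes are stars with apices $p_1,\ldots,p_k$, hence legitimate chromatic classes; this yields $\chi(D(C_{k,l}))\le k+l-m$ with $m=\left\lfloor\sqrt{2l+\tfrac14}-\tfrac12\right\rfloor$.

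The lower bound, however, has a genuine gap, and you essentially concede it yourself. Your inductive step rests on the claim that every optimal coloring $\gamma$ of $D(C_{k,l})$ contains a star whose apex is a lower point and whose color appears nowhere else; only then does Proposition~\ref{p:closed}($iii$) let you delete that point and recurse. This existence claim is not ``natural geometric leverage''---it is the entire content of the theorem. A chromatic class containing a mixed segment $pq$ with $p\in C_k$, $q\in C_l$ need not be a star at $p$: it may also contain segments of $\mathcal{C}_l$ (or segments incident to other lower points) that pairwise cross or share endpoints, so the colors used on $C_k*C_l$ can coincide with colors already counted in $\gamma(C_l)$ or be shared among several lower points. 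Ruling this out---equivalently, showing that the $k$ lower points force $k$ colors beyond the $l-m$ forced by the upper chain---is exactly the ``interleaving analysis'' you defer to~\cite{mario}, and without it the induction does not close. As written, the proposal establishes only the upper bound; the lower bound remains a plan rather than a proof.
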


\begin{cor}\label{c:neq6} Let $\gamma$ be a coloring of $D(X)$. Then 
\begin{itemize}

\item[($i$)] $|\gamma(T_1\cup T_2)|\geq 4$. 
\item[($ii$)] $|\gamma(A\cup B)|\geq 8$.
\item[($iii$)] $|\gamma(B\cup T_2)|\geq 6$.
\item[($iv$)] $|\gamma(A\cup T_1)|\geq 6$.
\item[($v$)] If $|\gamma(T_1\cup T_2)|\geq 6$, then $|\gamma(X)|\geq 14$.  
\item[($vi$)] If $|\gamma(A\cup B)|\geq 10$, then  $|\gamma(X)|\geq 14$.

\end{itemize}
\end{cor}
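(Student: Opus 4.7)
The plan is to handle parts (i)--(iv) as direct applications of Theorem~\ref{thm:mario} to the order-type identifications listed in Section~\ref{s:X}, and then to leverage the separability observations from the same section to derive (v) and (vi) by a color-disjointness argument.

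For (i)--(iv), I would first note that the restriction of $\gamma$ to $\qq$ is a valid coloring of $D(Q)$ for any $Q \subseteq X$, so $|\gamma(Q)| \geq \chi(D(Q))$. Since order-type equivalent point sets yield isomorphic disjointness graphs, I can invoke Theorem~\ref{thm:mario} on $T_1\cup T_2 \sim C_{3,3}$, $A\cup B \sim C_{5,5}$, $T_2\cup B \sim C_{3,5}$ and $A\cup T_1 \sim C_{5,3}$. The floor term $\lfloor\sqrt{2l+\tfrac14}-\tfrac12\rfloor$ evaluates to $2$ for $l\in\{3,5\}$, so the four chromatic numbers are $4$, $8$, $6$, $6$ respectively, matching exactly the bounds in (i)--(iv).

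For (v) and (vi), the key observation is that Section~\ref{s:X} lists both $T_1\cup T_2$ and $A\cup B$ as separable sets of $X$. I claim this forces the color-palettes $\gamma(T_1\cup T_2)$ and $\gamma(A\cup B)$ to be disjoint. Indeed, suppose some color $c$ were used on a segment $e \in \mathcal{T}_1\cup \mathcal{T}_2$ and on a segment $f \in \mathcal{A}\cup \mathcal{B}$. Since $\gamma$ is a proper coloring of $D(X)$, $e$ and $f$ must be incident or crossing in the plane; incidence is impossible because $(T_1\cup T_2)\cap (A\cup B)=\emptyset$, and crossing is ruled out because $f$ has no endpoint in $T_1\cup T_2$, so $f \in (\mathcal{T}_1\cup \mathcal{T}_2)^+$, contradicting separability. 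Hence $|\gamma(X)| \geq |\gamma(T_1\cup T_2)| + |\gamma(A\cup B)|$.

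Plugging the hypothesis of (v), $|\gamma(T_1\cup T_2)|\geq 6$, together with (ii), gives $|\gamma(X)|\geq 6+8=14$, and plugging the hypothesis of (vi), $|\gamma(A\cup B)|\geq 10$, together with (i), gives $|\gamma(X)|\geq 4+10=14$. There is no real obstacle in this corollary: the arithmetic for (i)--(iv) is a one-line invocation of Theorem~\ref{thm:mario}, and parts (v)--(vi) are essentially the content of Proposition~\ref{p:closed}(ii) applied to the two separable pairs, with the mild nuance that we argue color-disjointness directly from separability rather than quoting the proposition verbatim (which would otherwise require the exact equality $|\gamma(P')|=|P'|$).
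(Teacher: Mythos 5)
Your proposal is correct and follows essentially the same route as the paper: parts (i)--(iv) are read off from Theorem~\ref{thm:mario} via the order-type identifications $T_1\cup T_2\sim C_{3,3}$, $A\cup B\sim C_{5,5}$, $B\cup T_2\sim A\cup T_1\sim C_{3,5}$, and parts (v)--(vi) follow from the separability of $T_1\cup T_2$ giving $|\gamma(X)|\geq|\gamma(A\cup B)|+|\gamma(T_1\cup T_2)|$. Your explicit justification that separability forces the two color palettes to be disjoint (rather than citing Proposition~\ref{p:closed}(ii), whose hypothesis $|\gamma(P')|=|P'|$ does not hold here) is exactly the right reading of what the paper does implicitly.
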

\begin{proof} Since $T_1\cup T_2\sim C_{3,3}$, $A\cup B\sim C_{5,5}$ and 
$B\cup T_2 \sim A\cup T_1 \sim C_{3,5}$ then $|\gamma(T_1\cup T_2)|\geq 4$, 
\noindent $|\gamma(A\cup B)|\geq 8$, $|\gamma(B\cup T_2)|\geq 6$ and 
$|\gamma(A\cup T_1)|\geq 6$, by Theorem~\ref{thm:mario}.

Since $T_1\cup T_2$ is a separable subset of $X$, then $|\gamma(X)|\geq |\gamma(A\cup B)|+|\gamma(T_1\cup T_2)|$. 
If  $|\gamma(T_1\cup T_2)|\geq 6$, then {\it (ii)} implies  {\it(v)}. Similarly, if $|\gamma(A\cup B)| \geq 10$, then {\it (i)} implies {\it (vi)}. 
\end{proof}
 
Most of our proofs are by case analysis, depending on the cardinality of the following sets $\gamma(A), \gamma(B), 
\gamma(T_1)$ and $\gamma(T_2)$. For the rest of the paper we will use the following additional notation.

\begin{itemize}
\item[(N1)] If $|\gamma(A)|=3$, Proposition~\ref{p:gamma(A)=3} implies that $A$ has 3 points that are not apices of $\gamma(A)$. We will denote by 
$a_i, a_j, a_k$ these 3 points of $A$, and w.l.o.g. we assume that $i<j<k$. If $|\gamma(B)|=3$ we define $b_p, b_q, b_r$ with $p<q<r$, similarly. 

\item[(N2)] If $|\gamma(A)|=4$ and $\gamma^{\star}(A) = 5$, Proposition~\ref{p:5apices} implies that  $A$ has 2 points defining a $2-$star of $\gamma(A)$
and are such that none of them is apex of any other star of $\gamma(A)$. We will denote by 
$a_i$ and $a_j$ these 2 points of $A$, and w.l.o.g. we assume that $i<j$. If $|\gamma(B)|=4$ and $\gamma^{\star}(B) = 5$, we define $b_p$ and $b_q$ with $p<q$, similarly.  

\item[(N3)] If $|\gamma(T_i)|=2$ for $i\in \{1,2\}$, we let $e_i$ denote the segment of $T_i$ that has a different color than the other two segments of $T_i$.
\end{itemize}   

\begin{proposition}\label{p:BT1} $\chi(D(A\cup T_2))=6$ and $\chi(D(B\cup T_1))=6$. 
\end{proposition}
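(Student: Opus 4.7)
The plan is to use the symmetry $A\cup T_2\sim B\cup T_1$ first: since isomorphic order types yield isomorphic disjointness graphs of segments, it suffices to prove $\chi(D(A\cup T_2))=6$. The upper bound $\chi(D(A\cup T_2))\le 6$ is immediate from Proposition~\ref{p:n-2} because $|A\cup T_2|=8$, so all the work lies in the matching lower bound.

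For the lower bound, I would suppose toward a contradiction that $\gamma$ is a coloring of $D(A\cup T_2)$ using only $5$ colors, and then case-analyze on $|\gamma(A)|$ and $|\gamma(T_2)|$. Since $A\sim C_5$, the Fabila-Monroy--Jonsson--Valtr--Wood formula for $\chi(D(C_n))$ gives $|\gamma(A)|\ge\chi(D(C_5))=3$, and since $T_2\sim C_3$ is a triangle, $|\gamma(T_2)|\ge 1$. Because $|\gamma(A)\cup\gamma(T_2)|\le 5$, the number of colors in $\gamma(T_2)$ not already appearing in $\gamma(A)$ is at most $5-|\gamma(A)|$, leaving very few colors available for the $15$ segments of $A*T_2$ and the $3$ segments of $\tt_2$ beyond those used in $\mc A$.

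In each sub-case I would use notations (N1)--(N3) to pin down which points of $A$ are (or are not) apices of stars of $\gamma|_{\mc A}$, and then track which segments of $A*T_2\cup\tt_2$ can receive each color. The essential geometric input is the crossing pattern between segments of $A*T_2$ and those of $\mc A$, determined by the position of $T_2$ relative to $A$ in $X$. In every sub-case the goal is to exhibit two disjoint and non-crossing segments of $A\cup T_2$ that are forced to share a color, contradicting that $\gamma$ is a valid coloring in the sense of Note~\ref{rem:edge-coloring}. When $|\gamma(A)|=3$, Proposition~\ref{p:gamma(A)=3} supplies three non-apex points of $A$, which severely restricts how the stars and thrackles of $\gamma|_{\mc A}$ can be extended across $A*T_2$; when $|\gamma(A)|=4$ with $\gamma^{\star}(A)=5$, Proposition~\ref{p:5apices} provides an exposed $2$-star of $\gamma|_{\mc A}$ whose apices cannot serve as apices of any other star; when $|\gamma(A)|=5$ the only flexibility left is how $\tt_2$ and the at most one new color are distributed among the segments incident to $T_2$.

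The main obstacle, I expect, will be the exhaustive check in the middle case $|\gamma(A)|=4$, because there $\gamma|_{\mc A}$ can decompose in several ways (varying numbers of $2$-stars, $3$-stars and thrackles), and each decomposition needs its own verification that no compatible extension to $A*T_2\cup\tt_2$ using at most one additional color exists. Carrying out this verification requires the precise crossing information among the $28$ segments of $A\cup T_2$, which is dictated by the specific coordinates of $t_2^1,t_2^2,t_2^3$ relative to the points of $A$ in the set $X$ introduced in Section~\ref{s:X}; Proposition~\ref{p:closed}(ii) applied to separable subsets of $A\cup T_2$ should shorten some sub-cases, but the bulk of the argument will be a geometric accounting of forced same-color pairs.
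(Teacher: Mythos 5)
Your skeleton matches the paper's: reduce to one of the two sets via $A\cup T_2\sim B\cup T_1$, get the upper bound from Proposition~\ref{p:n-2}, and split on $|\gamma(A)|$ using Propositions~\ref{p:gamma(A)=3} and~\ref{p:5apices} with the notation (N1)--(N2). But as written the proposal is a plan rather than a proof: every case ends with a promise to ``track which segments can receive each color'' or with an anticipated ``exhaustive check,'' and none of the forcing arguments is actually supplied. The structural observation that makes the paper's proof short is also missing, namely that $A$ and $T_2$ are separable subsets of $A\cup T_2$ (no segment of $\mc A$ meets a segment of $\tt_2$), so $\gamma(A)\cap\gamma(T_2)=\emptyset$ and $|\gamma(A\cup T_2)|\ge|\gamma(A)|+|\gamma(T_2)|$. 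This disposes of $|\gamma(A)|=5$ instantly (your proposal still treats it as a live case needing analysis), forces $|\gamma(T_2)|=1$ when $|\gamma(A)|=4$, and sets up the count everywhere else.

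The second missing ingredient is the reason a segment $at$ with $a\in A$, $t\in T_2$ is forced to carry a fresh color: by the position of $T_2$ relative to $A$, such a segment meets a segment of $\mc A$ only if the latter is incident with $a$, so if $a$ is not the apex of any star of $\gamma|_{\mc A}$ then $\gamma(at)\notin\gamma(A)$. With this, the case $|\gamma(A)|=3$ is done by observing that $a_it_2^1$, $a_jt_2^2$, $a_kt_2^3$ (with $a_i,a_j,a_k$ the non-apices from (N1)) are pairwise disjoint and hence contribute three colors outside $\gamma(A)$, giving $3+3=6$; and the case $|\gamma(A)|=4$ needs only one such segment -- issued from a non-apex point if $\gamma^{\star}(A)<5$, or from an endpoint of the exposed $2$-star of (N2) otherwise -- rather than the enumeration over all decompositions of $\gamma|_{\mc A}$ into stars and thrackles that you anticipate. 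Without these two observations, the ``geometric accounting'' you defer is precisely the content of the proposition, so the proposal as it stands has a genuine gap.
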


\begin{proof} Since $A\cup T_2 \sim B\cup T_1$, it is enough to show that $\chi(D(B\cup T_1))=6$. 
Since $\chi(D(B\cup T_1))\leq 6$ by Proposition~\ref{p:n-2}, we need to show that $\chi(D(B\cup T_1))\geq 6$.
Let $\gamma$ be a coloring of $D(B\cup T_1)$. Clearly, $|\gamma(B)|\geq 3$, $|\gamma(T_1)|\geq 1$ and 
$|\gamma(B\cup T_1)|\geq |\gamma(B)| + |\gamma(T_1)|$. Then either $|\gamma(B)|\leq 4$ or we are done. 

If $|\gamma(B)|=3$, then $b_{p}t^1_1, b_{q}t^2_1$ and $b_{r}t^3_1$ receive distinct color, where $b_p, b_q$ and $b_r$ are as in (N1).
 Since none of these 3 colors appears in any segment of $\gamma(B)$, we are done.    
 
Suppose now that $|\gamma(B)|=4$. Then $|\gamma(T_1)|=1$, as otherwise we are done. 
If $B$ contains a point $b$ that is not an apex of $\gamma(B)$, then $\gamma(bt^1_1)$ is the $6$th required color. Then, $\gamma^{\star}(B)=5$ and hence
either $\gamma(b_{p}t^1_1)$ or $\gamma(b_{q}t^3_1)$ is the $6$th required color, where   $b_p$ and $b_q$ are as in (N2).    
\end{proof}


\section{Technical claims behind the proof of Theorem~\ref{thm:main}}\label{s:lemmas}  

As we mentioned before, the bulk of this paper is the proof that 
\begin{equation}\label{e:coco}
\chi(D(Q))\geq |Q|-2 \mbox{ for any } Q\subseteq X \mbox{ with } |Q|\geq 3.
\end{equation}
 
In this section we do this task. As we shall see, the ``symmetry" of $X$ and the good number of separable subsets that
$X$ contains will play a central role in this task. In particular, in this section we will proof that Inequality~(\ref{e:coco})
holds for about 15 representative (and strategic) subsets of $X$ of cardinality at most $11$.

Roughly speaking, our basic proof technique is the following: given $Q\subset X$, a coloring $\gamma$ of $\qq$, and a nonempty subset 
$\{f_1, \ldots , f_m\}$ of $\qq$ for which the set of colors $\{\gamma(f_1), \ldots ,\gamma(f_m)\}$ is known, we then proceed to find 
an ordered sequence $g_1, \ldots , g_l$ of segments in $\qq \setminus \{f_1, \ldots , f_m\}$ such that 
the number of colors in $\{\gamma(f_1), \ldots ,\gamma(f_m)\} \cup \{\gamma(g_1), \ldots ,\gamma(g_l)\}$ is equal to $|Q|-2$.  
We emphasize that for each $i\in \{1,\ldots ,l\}$, the determination of the unknown color $\gamma(g_i)$ depends on the colors previously fixed, namely
 $$\{\gamma(f_1), \ldots ,\gamma(f_m)\} \cup \{\gamma(g_1), \ldots ,\gamma(g_{i-1})\}.$$ 
 
\begin{note} When we have already fixed $|Q|-3$ colors in this process, and so it remains to prove the needed of the last color, we often write 
 $\gamma(g_j) \circeq c$ to mean that $\gamma(g_j)$ must be equal to the color $c$, 
 as otherwise $\gamma(g_j)$ is precisely the last required color. 
 \end{note}


\begin{lemma}\label{l:TXT} If $Q$ is a subset of $T_1\cup I \cup T_2$ with $I\in \{A, B\}$ and $|Q|\geq 3$, then $\chi(D(Q))=|Q|-2$. 
\end{lemma}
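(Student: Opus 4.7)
The plan is to reduce the lemma to a single case using Proposition~\ref{p:closed}$(i)$: it suffices to establish that $\chi(D(T_1\cup I\cup T_2))=11-2=9$ for each $I\in\{A,B\}$, since any $Q\subseteq T_1\cup I\cup T_2$ with $|Q|\ge 3$ then satisfies $\chi(D(Q))=|Q|-2$ automatically. The upper bound $\chi(D(T_1\cup I\cup T_2))\le 9$ is immediate from Proposition~\ref{p:n-2}; the substantive content is the lower bound, which I would verify by fixing an arbitrary coloring $\gamma$ of $D(T_1\cup I\cup T_2)$ and showing $|\gamma(T_1\cup I\cup T_2)|\ge 9$. I would focus on $I=A$ and then argue $I=B$ by the analogous strategy, invoking the order-type identity $A\cup T_2\sim B\cup T_1$ whenever possible and verifying the remaining geometric steps directly against Figure~\ref{fig:X}.

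The main structural ingredient is that $T_1\cup T_2$ is a separable subset of $X$, and hence of $T_1\cup A\cup T_2$. This forces $\gamma(T_1\cup T_2)$ and $\gamma(A)$ to be disjoint sets of colors, because two segments of the same chromatic class must cross or share an endpoint and a segment with both endpoints in $A$ does neither with any segment having both endpoints in $T_1\cup T_2$. Combined with $|\gamma(T_1\cup T_2)|\ge 4$ from Corollary~\ref{c:neq6}$(i)$ and $|\gamma(A)|\ge\chi(D(C_5))=3$ from Theorem~\ref{thm:mario}, this yields the baseline
$$|\gamma(T_1\cup A\cup T_2)|\ge |\gamma(T_1\cup T_2)|+|\gamma(A)|\ge 7.$$
Two additional colors must therefore be exhibited on segments of $A*T_1\cup A*T_2$ that lie outside $\gamma(T_1\cup T_2)\cup\gamma(A)$. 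The auxiliary inequalities $|\gamma(A\cup T_1)|\ge 6$ (Corollary~\ref{c:neq6}$(iv)$) and $|\gamma(A\cup T_2)|\ge 6$ (Proposition~\ref{p:BT1}, since $A\cup T_2\sim B\cup T_1$) are the principal source of these extra colors.

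I would then run a case analysis on the pair $(|\gamma(T_1\cup T_2)|,|\gamma(A)|)$ using the notational conventions (N1)--(N3). The cases $|\gamma(T_1\cup T_2)|\ge 6$, or $|\gamma(A)|\ge 5$, or the pair $(5,4)$, follow at once from the baseline inequality. The remaining pairs are $(4,3)$, $(4,4)$, and $(5,3)$, which require $2$, $1$, and $1$ additional colors, respectively. For each of these I would apply the $\circeq$ propagation scheme described in the Note preceding the lemma: starting from the known colors in $\gamma(T_1\cup T_2)\cup\gamma(A)$, select a carefully ordered sequence of segments $g_1,g_2,\ldots$ in $A*T_1\cup A*T_2$ and argue that each $g_i$ either introduces a new color or is forced into one of the already fixed colors by a crossing or incidence obstruction. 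Propositions~\ref{p:gamma(A)=3} and~\ref{p:5apices} govern the apex structure of $A$ in the sparse-coloring cases $|\gamma(A)|\in\{3,4\}$ and therefore identify the ``free'' points of $A$ from which to launch the $\circeq$ chain.

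The hard part, as I expect, is the tightest pair $(|\gamma(T_1\cup T_2)|,|\gamma(A)|)=(4,3)$, in which $\gamma$ is simultaneously optimal on the double chain $T_1\cup T_2\sim C_{3,3}$ and on $A\sim C_5$, leaving essentially no slack. Here two fresh colors must be extracted from $A*T_1\cup A*T_2$, and the argument is forced to invoke specific geometric information from Figure~\ref{fig:X}: precisely which segments in $A*T_i$ cross which segments inside $T_1\cup T_2$, and which of the three non-apex points $a_i,a_j,a_k$ of $A$ furnished by (N1) can be incident to segments of a prescribed color. The analogous case analysis for $I=B$ follows the same template, but must be checked separately, since $T_1\cup B\cup T_2$ is not itself order-type isomorphic to $T_1\cup A\cup T_2$.
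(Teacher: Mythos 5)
Your high-level strategy coincides with the paper's: reduce to $Q=T_1\cup I\cup T_2$ via Proposition~\ref{p:closed}($i$), get the upper bound from Proposition~\ref{p:n-2}, use separability of $T_1\cup T_2$ together with $|\gamma(T_1\cup T_2)|\ge 4$ and $|\gamma(A)|\ge 3$ for the baseline of $7$, and then hunt for the missing colors in $A*(T_1\cup T_2)$ by a case analysis driven by Propositions~\ref{p:gamma(A)=3} and~\ref{p:5apices} and the $\circeq$ propagation scheme. You have also correctly identified where $|\gamma(A\cup T_1)|\ge 6$ and $\chi(D(A\cup T_2))=6$ enter (the paper uses them, via separability of each individual $T_i$, to dispose of the case $|\gamma(T_i)|\ge 3$ and thereby restrict to $1\le|\gamma(T_1)|,|\gamma(T_2)|\le 2$). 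One organizational remark: the paper's cases are indexed by the triple $(|\gamma(A)|,|\gamma(T_1)|,|\gamma(T_2)|)$ rather than by the pair $(|\gamma(T_1\cup T_2)|,|\gamma(A)|)$; the individual triangle counts are what determine the segments $e_1,e_2$ of (N3) on which every propagation chain is anchored, so your coarser split would have to be refined to theirs before any chain can be launched.

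The genuine gap is that the proposal stops exactly where the proof begins. Everything after the baseline inequality --- the identification of the two (or one) extra colors in the cases $(4,3)$, $(4,4)$, $(5,3)$ --- is deferred with ``I would apply the $\circeq$ propagation scheme,'' but those chains are the entire content of the lemma: in the paper they occupy several pages, branch on which of the three segments of $T_i$ plays the role of $e_i$, on whether $\gamma^{\star}(A)=5$, and on the colors of specific segments such as $f_1=l(e_1)l(e_2)$ and $f_2=r(e_1)r(e_2)$, and each branch terminates only after a carefully ordered sequence of forced colorings. Nothing in your outline certifies that these chains close; indeed the fact that $X$ was engineered so that they barely close (cf.\ Proposition~\ref{p:many}) means the conclusion cannot be taken on faith from the counting framework alone. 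As written, the proposal is a correct plan for the paper's proof, not a proof.
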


\begin{proof} By Proposition~\ref{p:closed}~($i$), it is enough to show the assertion for $Q=T_1\cup I\cup T_2$. 
By Proposition~\ref{p:n-2}, all we need to show is that $\chi(D(Q)) \geq 9$. 
We only discuss the case $I=A$. The case $I=B$ can be handled in a similar way. Thus we assume $Q=T_1\cup A\cup T_2$. 

Let $\gamma$ be an optimal coloring of $D(Q)$. Since $|\gamma(A)|\geq 3$, then we can assume $|\gamma(T_1\cup T_2)|\in \{4,5\}$, as otherwise we are done. 
Similarly, since $|\gamma(Q)| \geq |\gamma(A)|+|\gamma(T_1\cup T_2)|$, then $|\gamma(A)|\in \{3,4\}$.

We claim that if $|\gamma(T_i)|\geq 3$ for some $i\in \{1,2\}$, then $\chi(D(Q))\geq 9$. Indeed, since $T_i$ is a separable subset of $Q$ with respect to $\gamma$,
then $|\gamma(D(Q))|\geq |\gamma(D(Q\setminus T_i))|+|\gamma(D(T_i))|\geq |\gamma(D(Q\setminus T_i))|+3$ by Proposition~\ref{p:closed}~$(ii)$. Since
$|\gamma(D(Q\setminus T_i))|\geq 6$ by Theorem~\ref{thm:mario}, if $i=2$ (respectively, Proposition~\ref{p:BT1} if $i=1$). Then we may assume
that $1\leq |\gamma(T_1)|, |\gamma(T_2)|\leq 2$. 

\vskip 0.1cm
{\sc Case 1}. Suppose that $|\gamma(A)|=3$. Let $a_i, a_j, a_{k}\in A$ be as in (N1). 

(1.1) If $|\gamma(T_1)|=1=|\gamma(T_2)|$, then $|\gamma(T_1\cup T_2)|=5$ and one of $a_{i}t^1_1$ or $a_{j}t^1_2$ provides the 9th color.  

(1.2) If $|\gamma(T_1)|=1$ and $|\gamma(T_2)|=2$, then none of $\gamma(a_{i}t^1_1), \gamma(a_{j}t^2_1),$ and $\gamma(a_{k}t^3_1)$
belongs to $\gamma(A)\cup \gamma(T_1)$, and so $|\gamma(A\cup T_1)|\geq 7$. This inequality together with $|\gamma(T_2)|=2$
 imply the required inequality. 
  
(1.3) If $|\gamma(T_1)|=2$ and $|\gamma(T_2)|=1$, then either $l(e_1)a_i$ or $r(e_1)t^2_2$ provides the $7$th color, where $e_1$ is as in (N3). The last two required colors are 
$\gamma(t^1_2a_{j})$ and  $\gamma(t^3_2a_{k})$. 

(1.4) Suppose that $|\gamma(T_1)|=2=|\gamma(T_2)|$. We need to show the existence of 2 additional co\-lors. Let $e_1$ and $e_2$ be as in (N3), let 
$f_1:=l(e_1)l(e_2)$ and $f_2:=r(e_1)r(e_2)$. We assume w.l.o.g. that $\gamma(T_1)=\{c_4, c_5\}, \gamma(T_2)=\{c_6, c_7\}, \gamma(e_1)=c_4$ and $\gamma(e_2)=c_7$.

We note that if $\gamma(h)\notin \gamma(T_2)$ for some $h\in a_{k}*T_2$, then $h$ and either
$l(e_1){a_i}$ or $r(e_1){a_j}$ provide the required two colors. Then the color of any segment in $a_{k}*T_2$ must be $c_6$ or $c_7$. For $l\in \{1,2,3\}$, let $h_l=a_{k}t^l_2$.

$\bullet$ Suppose that $e_2=t^1_2t^2_2$. Then $\gamma(t^1_2t^3_2)=\gamma(t^2_2t^3_2)=c_6$, $\gamma(h_3)\circeq c_6$ and $\gamma(h_1)\circeq c_7$.  
Suppose first that $\gamma(h_2)=c_7$. If $\gamma(f_2)\neq c_4$, then $f_2$ and $t^1_2a_{j}$ provide the required colors. Then $\gamma(f_2)=c_4$, 
and $f_1$ together with either $l(f_1){a_i}$ or $r(f_1){a_j}$ provide the required colors. 
Suppose now that $\gamma(h_2)=c_6$. We note that $\gamma(t^1_2a_{i})$ and $\gamma(t^2_2a_{j})$ cannot be $c_6$, and moreover, exactly one of them must be $c_7$
as otherwise we are done. Since if $\gamma(t^1_2a_{i})=c_7$, then $t^2_2a_{j}$ together with either $l(e_1){a_i}$ or $r(e_1)t^3_2$ provide the two required colors. Then 
$\gamma(t^2_2a_{j})=c_7$, and so $t^1_2a_{i}$  together with either $l(e_1){t^2_2}$ or $r(e_1){t^3_2}$ provide the two required colors.

$\bullet$ Suppose that $e_2=t^2_2t^3_2$. Then $\gamma(t^1_2t^2_2)=\gamma(t^1_2t^3_2)=c_6$, $\gamma(h_1)= c_6$ and $\gamma(h_3)= c_7$.
 We note that neither $\gamma(t^1_2a_{i})$ nor $\gamma(t^2_2a_{j})$ can be $c_7$, and moreover, exactly one of them must be $c_6$ 
as otherwise we are done. Suppose first that $\gamma(t^1_2a_{i})=c_6$. Then $\gamma(h_2)\circeq c_7$, and so $t^2_2a_{j}$  together with either
$l(e_1)a_i$ or $r(e_1)t^3_2$ provide the two required colors. Suppose now that $\gamma(t^2_2a_{j})=c_6$. 
 Note that if $\gamma(h_2)=c_6$, then $t^1_2a_{j}$ together with either 
 $l(e_1)a_{i}$ or $r(e_1)t^2_2$ provide the two required colors. Then $\gamma(h_2)=c_7$, and hence 
$t^1_2a_{i}$ and some of $f_1$ or $f_2$ give the two required colors. 

$\bullet$ Suppose that $e_2=t^1_2t^3_2$. Then $\gamma(t^1_2t^2_2)=\gamma(t^2_2t^3_2)=c_6$, $\gamma(h_1)= c_7$ and $\gamma(h_3)= c_7$.
  The required colors are given by $t^1_2a_{j}$ and some segment of $l(e_1)a_{i}$ or $r(e_1)t^3_2$.

\vskip 0.1cm
{\sc Case 2}. Suppose that $|\gamma(A)|=4$. 
Since $|\gamma(T_1)|=1=|\gamma(T_2)|$ imply $|\gamma(T_1\cup T_2)|\geq 5$, and we know that $|\gamma(A\cup T_1 \cup T_2)|\geq |\gamma(A)|+|\gamma(T_1 \cup T_2)|$, 
we can assume that some of $|\gamma(T_1)|\geq 2$ or $|\gamma(T_2)|\geq 2$ holds.   

\vskip 0.1cm
(2.1) Suppose $\gamma^{\star}(A) < 5$. Let $a$ be a point of $A$ that is not an apex of $\gamma(A)$.
  
$\bullet$ If $|\gamma(T_1)|=2$ and $|\gamma(T_2)|=1$, then $at^3_2$ together with either $l(e_1)t^1_2$ or $r(e_1)t^2_2$ give the two required colors.

$\bullet$ If $|\gamma(T_1)|=1$ and $|\gamma(T_2)|=2$, then $at^1_1$ together with either $t_1^2l(e_2)$ or $t_1^3r(e_2)$ give the two required colors.
 
$\bullet$  Suppose that $|\gamma(T_1)| = 2 = |\gamma(T_2)|$. Then we need to show the existence of one additional color.
 Let $f_1 := l(e_1)l(e_2)$ and $f_2 := r(e_1)r(e_2)$. We note that  $\{\gamma(f_1), \gamma(f_2)\} = \{\gamma(e_1), \gamma(e_2)\}$ 
 as otherwise we are done. Then $\gamma(f_1) \neq \gamma(e_i)$ for some $i \in \{1, 2\}$. Let $v:=f_1 \cap e_i$. Then $v\in \{t_1^1, t_1^2, t_2^1, t_2^2\}$. 
Since $v\neq t_2^2$ implies that  $\gamma(av)$ is  the required color, we can assume $v=t_2^2$ and hence we must have $e_i=t_2^2t_2^3$. Then  
  $\gamma(t_2^2r(e_1)) \in \{\gamma(f_1), \gamma(f_2)\}$ as otherwise we are done. If $\gamma(t_2^2r(e_1)) = \gamma(f_1)$ 
  (respectively, $\gamma(t_2^2r(e_1)) = \gamma(f_2)$), then $\gamma(l(e_1)a)$ (respectively, $\gamma(t_2^3a)$) is the required color.

\vskip 0.1cm  
(2.2) Suppose that $\gamma^{\star}(A) = 5$. Let $a_{i}$ and $a_{j}$ be as in (N2).

$(2.2.1)$ Suppose that $|\gamma(T_1)|=2$ and $|\gamma(T_2)|=1$. Clearly, $\gamma(t^1_2a_{i}), \gamma(t^3_2a_{j})\notin \gamma(T_2)$, and moreover, 
exactly one of these two must be $\gamma(a_ia_j)$ as otherwise we are done. If $\gamma(t^1_2a_{i})=\gamma(a_ia_j)$, then
$t^3_2a_{j}$ together with either $l(e_1)t^1_2$ or $r(e_1)t^2_2$ give the two required colors. Then $\gamma(t^3_2a_{j})=\gamma(a_ia_j)$, and so
$t^1_2a_{i}$ together with either $l(e_1)t^2_2$ or $r(e_1)t^3_2$ give the two required colors.

\vskip 0.1cm
$(2.2.2)$ Suppose that $|\gamma(T_1)|=1$ and $|\gamma(T_2)|=2$. We need to show the existence of two additional colors. 
Assume w.l.o.g. that $\gamma(T_2)=\{c_6, c_7\}$ and that $\gamma(e_2)=c_7$.

$\bullet$ Suppose that $e_2=t^2_2t^3_2$. Then $\gamma(t^1_2t^2_2)=\gamma(t^1_2t^3_2)=c_6$. 
Clearly, either $\gamma(t^2_1t^2_2)$ or $\gamma(t^3_1t^3_2)$ is the $8$th color $c_8$ and the other one must be $c_7$, as otherwise we are done. 
If $\gamma(t^2_1t^2_2)=c_7$ and $\gamma(t^3_1t^3_2)=c_8$, then either $a_it_1^1$ or $a_jt_1^2$ provides the $9$th color, and if 
 $\gamma(t^3_1t^3_2)=c_7$ and $\gamma(t^2_1t^2_2)=c_8$, then $\gamma(t^1_1a_{i})\circeq \gamma(a_ia_j),~\gamma(t^2_1a_{j})\circeq c_8$ and 
$\gamma(t^3_1t^2_2)\circeq c_7$. These imply that $\gamma(t^3_2a_{j})$ is the $9$th color. 

$\bullet$ Suppose that $e_2=t^1_2t^2_2$. Then $\gamma(t^1_2t^3_2)=\gamma(t^2_2t^3_2)=c_6$.  
Clearly, either $\gamma(t^2_1t^1_2)$ or $\gamma(t^3_1t^2_2)$ is the $8$th color $c_8$ and the other one must be $c_7$, as otherwise we are done. 
Then $\gamma(t^1_1a_{i})\circeq \gamma(a_ia_j)$.
 If $\gamma(t^2_1t^1_2)=c_7$ and $\gamma(t^3_1t^2_2)=c_8$, then $\gamma(t^2_1a_{j})$ is the $9$th color. Suppose now 
 that $\gamma(t^3_1t^2_2)=c_7$ and  $\gamma(t^2_1t^1_2)=c_8$. Then $\gamma(t^2_1a_{j})\circeq c_8,~\gamma(t^3_1t^1_2)\circeq c_7$ and 
 $\gamma(t^3_1t^3_2)\circeq c_6$. These imply that $\gamma(t^2_2a_{j})$ is the $9$th color.  

$\bullet$ Suppose that $e_2=t^1_2t^3_2$. Then $\gamma(t^1_2t^2_2)=\gamma(t^2_2t^3_2)=c_6$. Clearly, either $\gamma(t^2_1t^1_2)$ or $\gamma(t^3_1t^3_2)$  
is the $8$th color $c_8$ and the other one must be $c_7$, as otherwise we are done. Then $\gamma(t^1_1a_{i})\circeq \gamma(a_ia_j)$.
If $\gamma(t^2_1t^1_2)=c_7$ and $\gamma(t^3_1t^3_2)=c_8$, then $\gamma(t^2_1a_{j})$ is the $9$th color. Suppose now that $\gamma(t^3_1t^3_2)=c_7$ and 
$\gamma(t^2_1t^1_2)=c_8$, then $\gamma(t^2_1a_{j})\circeq c_8$ and $\gamma(t^3_1t^1_2)\circeq c_7$. These imply that $\gamma(t^3_2a_{j})$ is the $9$th color.

\vskip 0.1cm
$(2.2.3)$ Suppose that $|\gamma(T_1)|=2=|\gamma(T_2)|$. As above, let $f_1:=l(e_1)l(e_2)$ and $f_2:=r(e_1)r(e_2)$. 
We need to show the existence of one additional color, say $c_9$. 
Since if $|\gamma(T_1\cup T_2)|\geq 5$ there is nothing to prove, we assume $|\gamma(T_1\cup T_2)|=4$. 
Suppose that $\gamma(T_1)=\{c_4, c_5\}, \gamma(T_2)=\{c_6, c_7\}, \gamma(e_1)=c_4$ and $\gamma(e_2)=c_7$. Then
either $\{\gamma(f_1), \gamma(f_2)\}=\{c_4, c_7\}$ or we are done.  Let $u_1:=l(e_1)$ and $u_2:=r(e_1)$. 

$\bullet$ Suppose that $e_2=t^1_2t^3_2$. Then $\gamma(t^1_2t^2_2)=\gamma(t^2_2t^3_2)=c_6$. 
If $\gamma(f_1)=c_7$ and $\gamma(f_2)=c_4$, then either $u_1a_{i}$ or $t_2^3a_{j}$ must be colored with $c_9$. Suppose now that
$\gamma(f_1)=c_4$ and $\gamma(f_2)=c_7$. Then either $\gamma(u_2t^1_2)\in \{c_4, c_7\}$ or $\gamma(u_2t^1_2)$ is the $9$th color. 
If $\gamma(u_2t^1_2)=c_4$, then either $u_1a_{i}$ or $t_2^1a_{j}$ must be colored with $c_9$, and if $\gamma(u_2t^1_2)=c_7$, then 
either $a_{i}t_2^1$ or $a_{j}t_2^3$ must be colored with $c_9$.

$\bullet$ Suppose that $e_2=t^1_2t^2_2$. Then $\gamma(t^1_2t^3_2)=\gamma(t^2_2t^3_2)=c_6$. If $\gamma(f_1)=c_7$ and 
$\gamma(f_2)=c_4$, then either $\gamma(u_1t^2_2)\in \{c_4, c_7\}$ or we are done. 
If $\gamma(u_1t^2_2)=c_7$ then either $u_1a_i$ or $t_2^1a_j$ must be colored with $c_9$, and if
$\gamma(u_1t^2_2)=c_4$ then $\gamma(u_1a_i)\circeq \gamma(a_{i}a_{j})$ and either $t_2^2a_j$ or $t_2^3u_2$ is colored with $c_9$. 
Suppose now that $\gamma(f_1)=c_4$ and $\gamma(f_2)=c_7$. Then either $\gamma(u_2t^1_2)\in \{c_4, c_7\}$ or we are done. 
If $\gamma(u_2t^1_2)=c_4$ then either $u_1a_{i}$ or $t^1_2a_{j}$ must be colored with $c_9$, and if $\gamma(u_2t^1_2)=c_7$, then 
$\gamma(a_{i}t^1_2)\circeq \gamma(a_ia_j), \gamma(a_{j}t^2_2)\circeq c_6$, and so $\gamma(u_2t^3_2)=c_9$.

$\bullet$ Suppose that $e_2=t^2_2t^3_2$. Then $\gamma(t^1_2t^2_2)=\gamma(t^1_2t^3_2)=c_6$. If $\gamma(f_1)=c_7$ and 
$\gamma(f_2)=c_4$, then either $u_1a_{i}$ or $t^3_2a_{j}$ must be colored with $c_9$. Suppose now that 
that $\gamma(f_1)=c_4$ and $\gamma(f_2)=c_7$. Then either $\gamma(u_2t^2_2)\in \{c_4, c_7\}$ or we are done. 
If $\gamma(u_2t^2_2)=c_4$, then $\gamma(u_1a_{i})\circeq \gamma(a_ia_j), \gamma(t^2_2a_{j})\circeq c_6$, and so $\gamma(u_1t^1_2)=c_9$. If
$\gamma(u_2t^2_2)=c_7$, then $\gamma(t^3_2a_{j})\circeq \gamma(a_ia_j), \gamma(t^2_2a_{i})\circeq c_6, \gamma(u_2t^1_2)\circeq c_4$, 
and so $\gamma(u_1a_{i})=c_9$.
\end{proof}

\begin{cor}\label{c:3,4} 
Let $\gamma$ be a  coloring of $D(X)$. If $I\in \{A,B\}$ and  $|\gamma(I)|\geq 5$ then $|\gamma(D(X))|\geq 14$. 
\end{cor}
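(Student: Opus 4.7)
My plan is to deduce this corollary directly from two ingredients that are already in place: the separability of $A$ and $B$ in $X$, and the exact value of $\chi(D(T_1\cup J\cup T_2))$ established in Lemma~\ref{l:TXT}.

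First I would invoke the basic fact, listed among the structural properties of $X$ in Section~\ref{s:X}, that both $A$ and $B$ are separable subsets of $X$. By the last sentence of Section~\ref{s:notation}, this implies that $I\in\{A,B\}$ is separable with respect to \emph{every} coloring $\gamma$ of $D(X)$. Unwinding the definition of ``separable with respect to $\gamma$'', no color used on a segment of $\mathcal{I}$ appears on any segment of $\mathcal{X}^{+}$ relative to $I$, and since segments with both endpoints in $X\setminus I$ are automatically not in $\mathcal{I}^{+}$-crossing position in the way that would force color reuse (this is exactly what separability guarantees at the color level), the sets $\gamma(I)$ and $\gamma(X\setminus I)$ are disjoint subsets of $\gamma(X)$. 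Hence
$$|\gamma(X)| \;\geq\; |\gamma(I)| + |\gamma(X\setminus I)|.$$

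Next I would bound the second summand using Lemma~\ref{l:TXT}. Observe that $X\setminus A = T_1\cup B\cup T_2$ and $X\setminus B = T_1\cup A\cup T_2$, both of cardinality $11$, and both of the shape $T_1\cup J\cup T_2$ with $J\in\{A,B\}$ covered by Lemma~\ref{l:TXT}. Therefore $\chi(D(X\setminus I)) = 11 - 2 = 9$. Since the restriction of $\gamma$ to the segments with both endpoints in $X\setminus I$ is a proper coloring of $D(X\setminus I)$, we conclude $|\gamma(X\setminus I)|\geq 9$.

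Combining the two inequalities with the hypothesis $|\gamma(I)|\geq 5$ gives $|\gamma(X)|\geq 5+9 = 14$, as required. I do not foresee a genuine obstacle here: the corollary is really a one-line bookkeeping consequence of Lemma~\ref{l:TXT} together with the separability of $A$ and $B$, both of which are already at hand. The only minor thing to be careful about is to state the separability argument in the ``color-disjoint'' form (rather than the form of Proposition~\ref{p:closed}(ii), whose stronger hypothesis $|\gamma(I)|=|I|$ we do not want to assume).
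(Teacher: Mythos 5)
Your proposal is correct and follows essentially the same route as the paper: the paper's one-line proof likewise uses the separability of $I$ to split $\gamma(X)$ into the disjoint color sets $\gamma(I)$ and $\gamma(X\setminus I)$, bounds the latter by $9$ via Lemma~\ref{l:TXT} applied to $X\setminus I=T_1\cup J\cup T_2$, and adds the hypothesis $|\gamma(I)|\geq 5$. Your extra care in using the color-disjointness form of separability rather than Proposition~\ref{p:closed}(ii) is a reasonable clarification but does not change the argument.
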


\begin{proof} Since $I$ is a separable subset of $X$, then Lemma~\ref{l:TXT} imply $|\gamma(D(X))|\geq |\gamma(X\setminus I)|+|\gamma(I)|\geq 9+5$. 
\end{proof}

\begin{lemma}\label{thm:luis} Let $A'\subset A, B'\subset B$ and $T'\subset (T_1\cup T_2)$ be such that $|A'|=|B'|=|T'|=3$. 
If $Q$ is a subset of $A'\cup T'\cup B'$ with $|Q|\geq 3$, then $\chi(D(Q))=|Q|-2$. 
\end{lemma}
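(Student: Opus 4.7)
The plan is to invoke Proposition~\ref{p:closed}(i) together with Proposition~\ref{p:n-2} to reduce to showing $\chi(D(Q))\geq 7$ for the $9$-point set $Q:=A'\cup T'\cup B'$. Fix an arbitrary coloring $\gamma$ of $D(Q)$.

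The starting point is a symmetric separability structure on $Q$. Because each of $A$, $B$, $A\cup B$, $T_1\cup T_2$ is separable in $X$ and because $A'\subseteq A$, $B'\subseteq B$, $T'\subseteq T_1\cup T_2$, each of $A',B',T',A'\cup B',A'\cup T',B'\cup T'$ is separable in $Q$. Since same-colored segments must cross or share a vertex, and since $A',B',T'$ are pairwise vertex-disjoint with no segment of one crossing a segment of another, one obtains the three disjointness identities
\[
\gamma(A'\cup B')\cap\gamma(T')=\gamma(A'\cup T')\cap\gamma(B')=\gamma(B'\cup T')\cap\gamma(A')=\emptyset,
\]
and hence
\[
|\gamma(Q)|\geq \max\bigl\{|\gamma(A'\cup B')|+|\gamma(T')|,\,|\gamma(A'\cup T')|+|\gamma(B')|,\,|\gamma(B'\cup T')|+|\gamma(A')|\bigr\}.
\]
Now $A'\cup B'\sim C_{3,3}$ gives $|\gamma(A'\cup B')|\geq 4$ by Theorem~\ref{thm:mario}; and Lemma~\ref{l:TXT} with $I\in\{A,B\}$ combined with Proposition~\ref{p:closed}(i) applied to $A'\cup T'\subsetneq A\cup T_1\cup T_2$ and $B'\cup T'\subsetneq B\cup T_1\cup T_2$ gives $|\gamma(A'\cup T')|,|\gamma(B'\cup T')|\geq 4$ as well. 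Plugging these lower bounds into the display above, the lemma is immediate whenever any triangle size $|\gamma(A')|,|\gamma(B')|,|\gamma(T')|$ is $\geq 3$ or any pair size $|\gamma(A'\cup B')|,|\gamma(A'\cup T')|,|\gamma(B'\cup T')|$ is $\geq 6$.

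It remains to handle the critical configurations in which all three triangle sizes lie in $\{1,2\}$ and all three pair sizes lie in $\{4,5\}$. In these configurations I would split by the shape of $T'$ (the four cases $T'=T_1$, $T'=T_2$, $|T'\cap T_1|=2$, $|T'\cap T_1|=1$) and in each case exploit the rigid structure of the optimal $4$-colorings of the $C_{3,3}$-type subsets $A'\cup B'$ and, when $T'\subseteq T_i$, $A'\cup T_1$ or $B'\cup T_2$, together with Propositions~\ref{p:5apices} and~\ref{p:gamma(A)=3} applied to the convex $5$-subsets of $Q$. Each case should conclude with a forced-color chase along cross segments in $(A'\cup B')*T'$, in the style of the proof of Lemma~\ref{l:TXT}, producing a seventh color on some cross segment.

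The main obstacle is the extreme sub-case with $|\gamma(A')|=|\gamma(B')|=|\gamma(T')|=1$ and $|\gamma(A'\cup B')|=|\gamma(A'\cup T')|=|\gamma(B'\cup T')|=4$. There each pair-restriction of $\gamma$ is an optimal $C_{3,3}$-coloring with exactly two additional colors on cross segments, and the seventh color must be squeezed out of the union of the three resulting two-element cross-color sets; proving that this union has size at least four will require a delicate compatibility argument showing that no single color can serve as a cross-color on all three of $A'*B'$, $A'*T'$ and $B'*T'$ in a mutually consistent way.
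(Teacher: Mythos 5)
Your reduction and bookkeeping are sound and match the paper's opening moves: reduce via Proposition~\ref{p:closed}($i$) and Proposition~\ref{p:n-2} to showing $\chi(D(A'\cup T'\cup B'))\geq 7$, use separability to get $|\gamma(Q)|\geq |\gamma(I)|+|\gamma(Q\setminus I)|$ for $I\in\{A',B',T'\}$, and invoke Theorem~\ref{thm:mario}/Lemma~\ref{l:TXT} to confine the critical configurations to $|\gamma(A')|,|\gamma(B')|,|\gamma(T')|\leq 2$. But from that point on you have written a plan, not a proof. The entire substance of the lemma is the forced-color chase in those critical configurations, and your proposal replaces it with ``I would split by the shape of $T'$ \dots exploit the rigid structure \dots each case should conclude with a forced-color chase,'' explicitly deferring the hardest step to ``a delicate compatibility argument'' that is never given. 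In the paper this is roughly two pages of case analysis (Cases 1.1--1.3 and 2.1--2.3, each with several geometric sub-bullets tracking which specific segments of $(A'\cup B')*T'$ are forced into old colors), and none of that content is present or reconstructible from what you wrote. So there is a genuine gap: the proof of the lemma is missing.

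Two further points of comparison. First, you propose a four-way split on $|T'\cap T_1|$; the paper observes that if $T'$ is concave up (resp.\ down) then $Q\sim A'\cup T_2\cup B'$ (resp.\ $A'\cup T_1\cup B'$), so up to order type one may assume $T'\in\{T_1,T_2\}$ and, by symmetry, treat only $T'=T_1$ --- a reduction that would cut your intended case analysis substantially and that you should not skip. Second, you single out the all-singleton configuration $|\gamma(A')|=|\gamma(B')|=|\gamma(T')|=1$ as the main obstacle, but it is in fact the easiest case: the three pairwise disjoint segments $a_it_1^1, a_jt_1^2, a_kt_1^3$ must receive three distinct colors none of which lies in $\gamma(A')\cup\gamma(T_1)$, giving $|\gamma(A'\cup T_1)|\geq 5$, after which one cross segment into $B'$ finishes. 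The genuinely laborious configurations are those with $|\gamma(T')|=2$ and $|\gamma(A')|=|\gamma(B')|=2$ (the paper's Case 2.3), where the position of $T'$ relative to the quadrilateral spanned by the bichromatic sides of $A'$ and $B'$ must be tracked; your proposal gives no indication of how to handle these.
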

\begin{proof} By Proposition~\ref{p:closed}($i$), it is enough to show the assertion for $Q:=A'\cup T'\cup B'$. 
By Proposition~\ref{p:n-2}, all we need to show 
is that $\chi(D(Q))\geq 7$. On the other hand, it is not hard see that if $T'$ is concave up (respectively, concave down) then 
$Q\sim A'\cup T_2 \cup B'$ (respectively, $Q\sim A'\cup T_1 \cup B'$). Then, w.l.o.g. we can assume that $Q$ is either $A'\cup T_1 \cup B'$
or $A'\cup T_2 \cup B'$. We only analyze the case $T'=T_1$. The case $T'=T_2$ can be handled in a similar way. 

Let $\gamma$ be a  coloring of $D(Q)$. Clearly, each of $A', B'$ and $T_1$ is a separable subset of $Q$. 
 Since $|\gamma(T_1\cup B')|\geq 4$ by Lemma~\ref{l:TXT}, we can assume $1\leq |\gamma(A')|\leq 2$, as otherwise we are done. 
Similarly, from $(A'\cup B' )\sim C_{3,3}\sim (A'\cup T_1)$ and  $|\gamma(C_{3,3})|\geq 4$ (by Theorem~\ref{thm:mario}), we can conclude
that $1\leq |\gamma(T_1)|, |\gamma(B')|\leq 2$. Let $a_i, a_j, a_k$ (resp. $b_p, b_q, b_r$) be the points of $A'$ (respectively, $B'$) with $i<j<k$ (resp. $p<q<r$).
 
\vskip 0.1cm
{\sc Case 1}. Suppose that $|\gamma(T_1)|=1$. 

$(1.1)$ Suppose that $|\gamma(A')|=1$. Then, $\gamma$ assigns distinct colors to $a_{i}t^1_1,$ $a_{j}t^2_1,$ and $a_{k}t^3_1$ and hence 
$|\gamma(A'\cup T_1)|\geq 5$. Since $|\gamma(B')|\geq 2$ implies the required inequality, we can assume that $|\gamma(B')|=1$.
Then either $a_ib_p$ or $t^1_1b_q$ provides the 7th color.   

$(1.2)$ Suppose that $|\gamma(A')|=2$ and $|\gamma(B')|=1$. Let $a'$ be the segment of $A'$ that has a different color than the other two segments of $A'$.
Since $|\gamma(T_1)|=1$, we need to show the existence of 3 additional colors.
Clearly, $c_1:=\gamma(b_pt^1_1)$ and $c_2:=\gamma(b_rt_1^3)$ are 2 new colors. Then $\gamma(t_1^1b_q)\circeq c_1$ and so either
$l(a')b_p$ or $r(a')t_1^2$ provides the 7th color.    
 
$(1.3)$ Suppose that $|\gamma(A')|=2$ and $|\gamma(B')|=2$. Let $a'$ (resp. $b'$) be the segment of $A'$ (resp. $B'$) that has a different color than the other two 
segments of $A'$ (resp. $B'$). Since $|\gamma(T_1)|=1$, we need to show the existence of 2 additional colors, say $c_6$ and $c_7$. 
Clearly, a segment of $l(b')t_1^1$ or $r(b')t_1^3$ provides the $6$th color $c_6$ and the other one must be colored with $\gamma(b')$, as otherwise we are done. If
$\gamma(l(b')t_1^1)=c_6$ then either $l(a')t_1^2$ or $r(a')t_1^3$ is colored with $c_7$. If  $\gamma(r(b')t_1^3)=c_6$ 
then either $l(a')t_1^1$ or $r(a')t_1^2$ is colored with $c_7$.

\vskip 0.1cm
{\sc Case 2}. Suppose that $|\gamma(T_1)|=2$. Let $e_1$ be as in (N3).
Suppose that $\gamma(T_1)=\{c_1, c_3\}$ and that $\gamma(e_1)=c_1$.
 
$(2.1)$ Suppose that $|\gamma(A')|=1$. If $|\gamma(B')|=1$ then $|\gamma(A'\cup B')|\geq 5$, and so 
$|\gamma(Q)|\geq 7$, as required. Thus, we can assume that $|\gamma(B')|=2$. Let $b'$ be the segment of $B'$ that has a different color than the other two 
segments of $B'$. Then, we need to show the existence of 2 additional colors, say $c_6$ and $c_7$.

 Clearly, a segment in $\{a_il(b'), a_kr(b')\}$ is colored with the $6$th color $c_6$ and the other one must be colored with $\gamma(b')$, as otherwise we are done. 
Let  $[]$ be the quadrilateral formed by $a_i, a_k, l(b')$ and $r(b')$. 

Suppose that $\gamma(a_il(b'))=c_6$ and $\gamma(a_kr(b'))=\gamma(b')$. If $T_1$ is not on the left of $[]$, then either $l(e_1)a_j$ or $r(e_1)a_k$ provides the 7th color. 

Thus we can assume that $T_1$ lies on the left of $[]$. Then $\gamma(a_jl(b'))\circeq c_6$, $\gamma(l(e_1)a_i)\circeq c_1$, $\gamma(r(e_1)a_i)\circeq c_1$ and
$\gamma(r(e_1)a_j)\circeq c_6$. Then either $a_it_1^m$ or $l(e_1)l(b')$  provides the 7th color, where $t_1^m$ is the point in $T_1$ but not in $e_1$. 

Suppose now that $\gamma(a_il(b'))=\gamma(b')$ and $\gamma(a_kr(b'))=c_6$. If $T_1$ is not on the right of $[]$, then either 
 $\gamma(l(e_1)a_i)$ or $\gamma(r(e_1)a_j)$ provides the 7th color. Thus we can assume that $T_1$ 
 lies on the right of $[]$. Then $\{\gamma(l(e_1)a_j),  \gamma(r(e_1)a_k)\}=\{c_1, c_6\}$, as otherwise we are done. 
 
$\bullet$  If  $\gamma(l(e_1)a_j)=c_1$ and $\gamma(r(e_1)a_k)=c_6$, then $a_jr(b')$ provides the 7th color. 

$\bullet$ If  $\gamma(l(e_1)a_j)=c_6$ and $\gamma(r(e_1)a_k)=c_1$, then $\gamma(a_jr(b'))\circeq c_6$ and $\gamma(l(e_1)a_k)\circeq c_1$. If 
$t_1^3\in e_1$, then $t_1^3r(b')$ provides the 7th color. If $t_1^3\notin e_1$, then $e_1=t_1^1t_1^2$ and $\gamma(t_1^1r(b'))\circeq c_6$, and
 either $t_1^2r(b')$ or $t_1^3a_k$ provides the 7th color. 

$(2.2)$ Suppose that $|\gamma(A')|=2$ and $|\gamma(B')|=1$. Let $a'$ be the segment of $A'$ 
that has a different color than the other two segments of $A'$.
We need to show the existence of 2 additional colors, say $c_6$ and $c_7$. Clearly, a segment in 
$\{l(a') b_p, r(a')b_r\}$ is colored with the $6$th color $c_6$ and the other one must be 
colored with $\gamma(a')$, as otherwise we are done. Let  $[]$ be the quadrilateral formed by $b_p, b_r, l(a')$ and $r(a')$.  

Suppose that $\gamma(l(a')b_p)=\gamma(a')$ and $\gamma(r(a')b_r)=c_6$. Suppose first that  $T_1$ lies inside of $[]$. 

$\bullet$ If  $e_1 = t_1^1t_1^2$, then $\gamma(t_1^1b_p) \circeq c_1$ and $\gamma(r(a')t_1^2) \circeq c_6$. Then either $b_qt_1^2$ or $b_rt_1^3$ is colored with $c_7$.

$\bullet$  If $e_1 = t_1^1t_1^3$, then $\gamma(t_1^1b_q) \circeq c_1$. Then either  $b_pt_1^1$ or $b_qt_1^3$ is colored with $c_7$.

$\bullet$ If $e_1 = t_1^2t_1^3$, then $\gamma(t_1^3b_q) \circeq c_1, \gamma(r(a')t_1^2) \circeq c_6$ and $\gamma(t_1^1b_p) \circeq c_3$. Then either 
$b_qt_1^2$ or $b_rt_1^3$ is colored with $c_7$.

Suppose now that $T_1$ lies in  the exterior of $[]$. Then $T_1$ lies on the right of $[]$ and $\gamma(r(a')b_q) \circeq c_6$.

$\bullet$ If  $e_1 = t_1^1t_1^2$, then $\gamma(b_rt_1^1) \circeq c_1$ and  $\gamma(b_rt_1^3)\circeq c_3$. Then $\gamma(b_rt_1^2) \in \{c_1, c_3\}$ or we are done. If
 $\gamma(b_rt_1^2) = c_1$, then either $r(a')t_1^2$ or $b_qt_1^1$ must be colored with $c_7$. If $\gamma(b_rt_1^2) = c_3$, then 
 $\gamma(r(a')t_1^2) \circeq c_6, \gamma(r(a')t_1^3) \circeq c_6, \gamma(b_qt_1^2) \circeq c_1$ and so $b_pt_1^1$ must be colored with $c_7$.

$\bullet$ If  $e_1 = t_1^1t_1^3$, then $\gamma(t_1^1b_r) \circeq c_1$ and $\gamma(t_1^3b_r) \circeq c_1$. Then either  $b_pt_1^1$ or $r(a')t_1^3$  is colored with $c_7$.

$\bullet$  If  $e_1 = t_1^2t_1^3$, then $\gamma(t_1^3b_r) \circeq c_1$, 
$\gamma(r(a')t_1^2) \circeq c_6, \gamma(t_1^1b_p) \circeq c_3$ and $\gamma(t_1^2b_r) \circeq c_1$. Then either $r(a')t_1^3$ or $b_qt_1^2$ is colored with $c_7$.

Suppose now that $\gamma(l(a')b_p)=c_6$ and $\gamma(r(a')b_r)=\gamma(a')$. 

$\bullet$  If $e_1=t_1^1t_1^2$, then $\gamma(t_1^3b_r)\circeq c_3, \gamma(t_1^1b_q)\circeq c_1,  \gamma(t_1^2b_q)\circeq c_1,  \gamma(t_1^2b_r)\circeq c_3$ and
$\gamma(t_1^3 r(a'))\circeq \gamma(a')$. These imply that either $t_1^2l(a')$ or $t_1^1b_p$ must be colored with $c_7$. 

$\bullet$  If $e_1=t_1^1t_1^3$, then either $t_1^1b_q$ or $t_1^3b_r$ is colored with $c_7$. 

$\bullet$  If $e_1=t_1^2t_1^3$, then $\gamma(t_1^3b_r)\circeq c_1, \gamma(t_1^1b_q)\circeq c_3,  \gamma(t_1^2b_q)\circeq c_3,  \gamma(t_1^2b_r)\circeq c_1$ and
$\gamma(t_1^3 r(a'))\circeq \gamma(a')$. These imply that either $t_1^1b_p$ or $t_1^2l(a')$ is colored with $c_7$. 

$(2.3)$ Suppose that $|\gamma(A')|=2$ and $|\gamma(B')|=2$. Since $|\gamma(T_1)|=2$, we need to show the existence of 1 additional color. Let 
$a'$ (resp. $b'$) be the segments of $A'$ (resp. $B'$) that has a different color than the other two segments of $A'$ (resp. $B'$).
We note that $\{\gamma(a'), \gamma(b')\}=\{\gamma(l(a')l(b')), \gamma(r(a')r(b'))\}$ as otherwise we are done. 

Let $[]$ be the convex quadrilateral formed be the endpoints of $a'$ and $b'$, and let $x$ (resp. $y$) be the endpoint of $a'$ (resp. $b'$) that is
 incident with both colors $\gamma(a')$ and $\gamma(b')$. Then, either $x=r(a')$ or $y=r(b')$ holds. Suppse that $T_1$ lies in the interior of $[]$. 
 If $t_1^3\in e_1$, then one of $xl(e_1)$ or $yt_1^3$ provides the required color. If $t_1^3\notin e_1$, then $e_1=t_1^1t_1^2$ and one of $xt_2^2$ or $yt_1^1$ provides the required color.  Thus, we can assume that $T_1$ lies outside of $[]$. Let $t_1^m$ be the point
  of $T_1$ that does not belong to $e_1$. 

$\bullet$ Suppose that $x$ is the endpoint of $a'$ that is closest to $t^2_1$. Then the triangle $\Delta(e_1,x)$ must be colored with $c_1$ and
$\gamma(xt_1^m)=c_3$, as otherwise we are done. These imply that either $l(e_1)l(b')$ or $r(e_1)r(b')$ provides the required color.

$\bullet$ Suppose finally that $x$ is the endpoint of $a'$ that is farthest to $t^2_1$. This implies that $y$ is the endpoint of $b'$ that is closest to $t^2_1$. 

$-$ If $e_1=t^1_1t_1^3$, then $\gamma(\Delta(e_1,y))=c_1$ and either $l(a')t_1^1$ or $r(a')t_1^3$ provides the 7th color.

$-$ If $e_1=t^1_1t_1^2$, then $\gamma(t_1^1y)\circeq c_1$ and either $\gamma(t_1^2y)\in \{c_1, c_3\}$ or we are done. If $\gamma(yt_1^2)=c_1$ 
(respectively, $\gamma(yt_1^2)=c_3$) then either $l(a')t_1^1$ or $r(a')t_1^2$ (respectively, $l(a')t_1^2$ or $r(a')t_1^3$) provides the required color. 

$-$ If $e_1=t^2_1t^3_1$, then $\gamma(t_1^3y)\circeq c_1$ and either $\gamma(yt^2_1)\in \{c_1, c_3\}$ or we are done. If $\gamma(yt^2_1)=c_1$ 
(respectively, $\gamma(yt^2_1)=c_3$) then either $l(a')t_1^2$ or $r(a')t_1^3$ (respectively,  $l(a')t^1_1$ or $r(a')t^2_1$) provides the 7th color.
\end{proof} 

\begin{lemma}\label{l:/TT} If $a\in A, b\in B$ and $Q:=T_1 \cup \{a, b\} \cup T_2$, then $\chi(D(Q)|\geq 6$.   
\end{lemma}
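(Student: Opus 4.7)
The upper bound $\chi(D(Q))\leq 6$ is immediate from Proposition~\ref{p:n-2}. For the matching lower bound, suppose for contradiction that $\gamma$ is a coloring of $\qq$ with $|\gamma(Q)|\leq 5$. Since $T_1\cup T_2$ is a separable subset of $X$, hence of $Q$, and $|\gamma(T_1\cup T_2)|\geq 4$ by Theorem~\ref{thm:mario} applied to $T_1\cup T_2\sim C_{3,3}$, the unique segment of $\qq$ with no endpoint in $T_1\cup T_2$, namely $ab$, is disjoint from every segment of $\tt_1\cup \tt_2$ and therefore carries a color outside $\gamma(T_1\cup T_2)$. Hence $5\geq |\gamma(Q)|\geq |\gamma(T_1\cup T_2)|+1\geq 5$, which forces $|\gamma(T_1\cup T_2)|=4$ and every segment of $\qq$ incident to $\{a,b\}$ but distinct from $ab$ to use a color in the five-element palette $\gamma(T_1\cup T_2)\cup\{\gamma(ab)\}$.

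Applying the same separability argument individually to $T_1$ and to $T_2$ yields a sharper restriction on the twelve segments in $a*T_1\cup a*T_2\cup b*T_1\cup b*T_2$: each segment of $a*T_1$ and of $b*T_1$ is disjoint from every segment of $\tt_2$, so its color lies in $\gamma(T_1)\cup\{\gamma(ab)\}$; symmetrically, segments of $a*T_2$ and of $b*T_2$ are colored within $\gamma(T_2)\cup\{\gamma(ab)\}$. Since $\gamma(T_1)\cap \gamma(T_2)=\emptyset$, we have $|\gamma(T_1)|+|\gamma(T_2)|=4$ with $|\gamma(T_i)|\geq 1$. The plan is then to case-split on the ordered pair $(|\gamma(T_1)|,|\gamma(T_2)|)\in\{(1,1),(1,2),(2,1),(1,3),(3,1),(2,2)\}$ and, in each case, exhibit a segment in $a*T_1\cup a*T_2\cup b*T_1\cup b*T_2$ whose forced color lies outside the palette, contradicting $|\gamma(Q)|=5$. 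The argument follows the template of the proof of Lemma~\ref{l:TXT}: fix the four colors of $\tt_1\cup \tt_2$ and the color of $ab$, and then propagate forced colors along a carefully ordered sequence of segments, using the crossing and incidence constraints imposed on any proper edge-coloring of $\qq$.

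The hardest case is expected to be $(|\gamma(T_1)|,|\gamma(T_2)|)=(2,2)$, where each $\tt_i$ carries two colors with a distinguished edge $e_i$ as in (N3), and the $4$-colorings of $D(C_{3,3})$ admit several patterns on the nine cross-segments $T_1*T_2$ depending on the positions of $e_1$ and $e_2$. Each pattern must be analyzed separately to track how colors propagate through $a*T_i$ and $b*T_j$, and to locate a segment — typically one involving an extremal point $l(e_i)$ or $r(e_i)$, or a segment from $a*T_2\cup b*T_1$ that separates $ab$ from a triangle $\Delta(e_i,\cdot)$ — that admits no color in the five-element palette. The cases $(1,1)$, $(1,k)$, and $(k,1)$ with $k\in\{2,3\}$ are easier: the smaller $|\gamma(T_i)|$ is, the smaller the admissible palette $\gamma(T_i)\cup\{\gamma(ab)\}$ for the corresponding $a*T_i$ or $b*T_i$, so a forbidden segment surfaces more quickly, mirroring the sub-steps (1.1)--(1.3) of the proof of Lemma~\ref{l:TXT}.
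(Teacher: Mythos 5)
Your setup matches the paper's: use separability of $T_1\cup T_2$ and Theorem~\ref{thm:mario} to get four colors, observe that $ab$ is disjoint from every segment of $\tt_1\cup\tt_2$ to get a fifth, reduce to $|\gamma(T_1\cup T_2)|=4$, and then hunt for a sixth color by cases on $(|\gamma(T_1)|,|\gamma(T_2)|)$. But two problems remain. First, your ``sharper restriction'' is not correct as stated. Separability of $T_2$ only tells you that a segment of $a*T_1\cup b*T_1$ avoids the colors of segments with \emph{both} endpoints in $T_2$; it does not confine it to $\gamma(T_1)\cup\{\gamma(ab)\}$, because $\gamma(T_1\cup T_2)$ may contain colors that appear only on cross-segments of $T_1*T_2$, and a segment of $a*T_1$ can share such a color (it is incident with an endpoint of such a cross-segment). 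This happens exactly when $|\gamma(T_1)|+|\gamma(T_2)|<4$, so your assertion that this sum equals $4$ is both unjustified and inconsistent with your own case list $(1,1),(1,2),(2,1),\dots$. Indeed, the paper shows $(1,1)$ is \emph{impossible} (the three pairwise disjoint segments $t_1^it_2^i$ would force five colors inside $T_1\cup T_2$), and in the $(2,1)$ and $(1,2)$ cases exactly one cross-segment $e$ carries the fourth color, a fact the paper's argument then exploits.

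Second, and decisively, the proposal stops where the proof begins. Everything after the five-color count is a plan (``the plan is then to case-split\dots'', ``the hardest case is expected to be\dots'', ``each pattern must be analyzed separately''); no forced-color propagation is actually carried out, no sixth color is exhibited in any case. The paper's proof of this lemma consists precisely of that propagation: in case $(2,1)$/$(1,2)$ it locates the distinguished cross-segment $e$ and chains forced colors through $a*T_i\cup b*T_i$; in case $(2,2)$ it first extracts a monochromatic triangle $\Delta$ on the endpoints of $e_1$ and $e_2$ (an idea absent from your sketch) and branches on which $e_i$ lies in $\Delta$. As written, the proposal does not establish the lemma.
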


\begin{proof}  Let $\gamma$ be an optimal coloring of $\qq$. By rotating $Q$ an angle $\pi$ around the origin, if necessary, we may assume w.l.o.g that 
$T_1\cup T_2$ lies on the right semiplane of the line spanned by $ab$. Similarly, we may assume that 
$\gamma(ab)=c_0$. Since if $|\gamma(T_1\cup T_2)|\geq 5$ there is nothing to prove, we can assume that 
$|\gamma(T_1\cup T_2)|=4$ by Corollary~\ref{c:neq6}. Thus, we need to show the existence of one additional color, say $c_6$.

We note that if $|\gamma(T_1)|=1=|\gamma(T_2)|$, then $\gamma(T_1),~\gamma(T_2),~\gamma(t^1_1t^1_2),~\gamma(t^2_1t^2_2)$ and $\gamma(t^3_1t^3_2)$
are pairwise distinct, contradicting that $|\gamma(T_1\cup T_2)|=4$. Thus, we can assume that $|\gamma(T_1)|+|\gamma(T_2)|\geq 3$.

$(1)$  Suppose that $|\gamma(T_{j})|=2$ and $|\gamma(T_{3-j})|=1$ for some $j\in \{1,2\}$. Then there is a unique $e\in\{t^1_1t^1_2, t^2_1t^2_2,\- t^3_1t^3_2\}$
such that $\gamma(e)\notin \gamma(T_1)\cup  \gamma(T_2)$. 

$(1.1)$ Suppose that $j=1$. Then $|\gamma(T_1)|=2$ and $|\gamma(T_2)|=1$. Note that if $e=t^1_1t^1_2$, then one of $\gamma(t^3_2a)$ or $\gamma(t^2_2b)$ 
must be $c_6$. Similarly, if $e=t^2_1t^2_2$, then one of $\gamma(t^1_2a)$ or $\gamma(t^3_2b)$ must be $c_6$. Thus we may assume that $e=t^3_1t^3_2$.
Then $\gamma(t^1_2a)\circeq c_0$, $\gamma(t^3_2b)\circeq \gamma(e)$, and either $l(e_1)t^1_2$ or $r(e_1)t^2_2$ must be colored with $c_6$. 

$(1.2)$ Suppose that $j=2$. Then $|\gamma(T_1)|=1$ and $|\gamma(T_2)|=2$. Note that if $e=t^2_1t^2_2$, then one of $\gamma(t^1_1a)$ or $\gamma(t^3_1b)$ 
must be $c_6$. Similarly, if $e=t^3_1t^3_2$, then one of $\gamma(t^2_1a)$ or $\gamma(t^1_1b)$ must be $c_6$. Thus we may assume that $e=t^1_1t^1_2$. 
Then $\gamma(t^3_1b)\circeq c_0$, $\gamma(t^1_1a)\circeq \gamma(e)$, and so either $l(e_2)t^2_1$ or $r(e_2)t^3_1$ must be colored with  $c_6$. 

$(2)$ Suppose that $|\gamma(T_1)|=|\gamma(T_2)|=2$.
 Suppose that $\gamma(T_1)=\{c_1, c_3\}, \gamma(T_2)=\{c_2, c_4\}, \gamma(e_1)=c_1$ and $\gamma(e_2)=c_4$.
 From $|\gamma(T_1\cup T_2)|=4$ it follows that there is a monochromatic triangle $\Delta$
 with vertices in the endpoints of $e_1$ and $e_2$. Then exactly one of $e_1\in \Delta$ or $e_2\in \Delta$ holds, and so $\gamma(\Delta)\in \{c_1, c_4\}$.   

$\bullet$ Suppose $e_1\in \Delta$. Then $\gamma(\Delta)=c_1$. If $e_1=t^1_1t^3_1$, then either $\gamma(t^1_1a)$ or $\gamma(t^3_1b)$ must be $c_6$. 
If $e_1=t^2_1t^3_1$, then either $\gamma(t^2_1a)$ or $\gamma(t^3_1b)$ must be $c_6$. Similarly, if 
$e_1=t^1_1t^2_1$, then $\gamma(t^1_1b)\circeq c_0$, $\gamma(t^1_1a)\circeq c_0,~\gamma(t^2_1b)\circeq c_3$, and so $\gamma(t^3_1a)=c_6$.

$\bullet$ Suppose $e_2\in \Delta$. Then $\gamma(\Delta)=c_4$.  If $e_2=t^1_2t^3_2$, then either $\gamma(t^1_2a)$ or $\gamma(t^3_2b)$ must be $c_6$. 
If $e_2=t^1_2t^2_2$, then either $\gamma(t^1_2a)$ or $\gamma(t^2_2b)$ must be $c_6$. Similarly, if $e_2=t^2_2t^3_2$ then $\gamma(t^3_2b)\circeq c_0$, $\gamma(t^3_2a)\circeq c_0,~\gamma(t^2_2a)\circeq c_2$, and so $\gamma(t^1_2b)=c_6$.
\end{proof}


 \begin{cor}\label{c:AB>=8} Let $\gamma$ be an optimal coloring of $D(X)$. If there are $a\in A$ and $b\in B$
such that $|\gamma(A\cup B \setminus \{a, b\})|\geq 8$ and $\gamma(A\cup B \setminus \{a, b\})\cap \gamma (T_1\cup \{a, b\}\cup T_2) = \emptyset$, then  
$|\gamma(X)|\geq 14$.  
\end{cor}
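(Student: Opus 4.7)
The plan is to exploit the disjointness hypothesis on the two color palettes, bounding each independently from below and adding. Concretely, set $Q := T_1 \cup \{a,b\} \cup T_2$ and $R := A \cup B \setminus \{a,b\}$, and observe that the segment sets $\qq$ and $\mathcal R$ are both subsets of $\xx$, so $\gamma(Q)$ and $\gamma(R)$ are both subsets of $\gamma(X)$. The hypothesis gives $\gamma(Q) \cap \gamma(R) = \emptyset$, so
\[
|\gamma(X)| \;\geq\; |\gamma(R)| + |\gamma(Q)|.
\]

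First I would handle $\gamma(R)$: by hypothesis $|\gamma(R)| \geq 8$, so nothing to do. Next I would handle $\gamma(Q)$: the restriction $\gamma|_{\qq}$ is, by Note~\ref{rem:edge-coloring}, a proper edge-coloring of the complete geometric graph $\qq$, hence a proper coloring of $D(Q)$. Therefore $|\gamma(Q)| \geq \chi(D(Q))$. Since $a \in A$, $b \in B$, and $Q = T_1 \cup \{a,b\} \cup T_2$ is exactly the set to which Lemma~\ref{l:/TT} applies, we conclude $|\gamma(Q)| \geq 6$.

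Combining, $|\gamma(X)| \geq 8 + 6 = 14$, which is the desired bound. There is no real obstacle here: the statement is a direct corollary whose only content is to package Lemma~\ref{l:/TT} with the disjointness hypothesis. The one point worth stating carefully is the passage from an optimal coloring of $D(X)$ restricted to $\qq$ being a valid (though not necessarily optimal) coloring of $D(Q)$, which is immediate from the definition of $D$ and the fact that the edges of $D(Q)$ are a subset of the edges of $D(X)$ incident to $\qq$.
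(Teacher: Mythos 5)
Your proof is correct and is essentially identical to the paper's own argument: both split $\gamma(X)$ into the disjoint color sets on $A\cup B\setminus\{a,b\}$ and on $T_1\cup\{a,b\}\cup T_2$, and invoke Lemma~\ref{l:/TT} for the bound of $6$ on the latter. The extra remark about the restriction of $\gamma$ being a valid coloring of $D(Q)$ is a fine (and implicitly used) detail.
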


\begin{proof} From $\gamma(A\cup B \setminus \{a, b\})\cap \gamma (T_1\cup \{a, b\}\cup T_2) = \emptyset$ it follows that 
$|\gamma(X)|\geq |\gamma(A\cup B \setminus \{a, b\})| + |\gamma (T_1\cup \{a, b\}\cup T_2)|\geq 8+6$, due to Lemma~\ref{l:/TT}.  
 \end{proof}  
  
 
\begin{lemma}\label{l:/TTb} Let $a\in A, b\in B$ and $Q:=T_1 \cup \{a, b\} \cup T_2$. If $\gamma$ is a coloring of $D(Q)$ such that
$\gamma(ab)\neq \gamma(g)$ for any vertex (segment) $g$ of $\qq\setminus \{ab\}$, then $|\gamma(Q)|\geq 7$.   
\end{lemma}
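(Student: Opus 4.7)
The plan is to argue by contradiction, combining a short absorption argument with a case analysis one colour stronger than that of Lemma~\ref{l:/TT}. Suppose $|\gamma(Q)|\le 6$; since Lemma~\ref{l:/TT} already forces $|\gamma(Q)|\ge 6$, we have $|\gamma(Q)|=6$. Writing $c_0:=\gamma(ab)$, the hypothesis says $c_0$ appears only on $ab$, so the restriction of $\gamma$ to $\qq\setminus\{ab\}$ uses exactly five colours $c_1,\ldots,c_5$. Following the setup in Lemma~\ref{l:/TT}, I would import the WLOG that $T_1\cup T_2$ lies in one open half-plane determined by the line through $a$ and $b$; in particular, no segment with both endpoints in $T_1\cup T_2$ crosses $ab$.

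The first substantive step is an \emph{absorption} argument that pins down $\gamma(T_1\cup T_2)$. I claim $\gamma(T_1\cup T_2)=\{c_1,\ldots,c_5\}$. Indeed, suppose some $c_i$ were absent from $\gamma(T_1\cup T_2)$; then every segment of colour $c_i$ has an endpoint in $\{a,b\}$, hence shares an endpoint with $ab$. Recolouring $ab$ with $c_i$ would thus produce a proper colouring of $D(Q)$ using only the five colours $c_1,\ldots,c_5$, contradicting Lemma~\ref{l:/TT}. Since also $|\gamma(T_1\cup T_2)|\le 5$, we conclude $|\gamma(T_1\cup T_2)|=5$.

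It remains to derive a contradiction in the case $|\gamma(T_1\cup T_2)|=5$. I would split on $(|\gamma(T_1)|,|\gamma(T_2)|)$ as in Lemma~\ref{l:/TT}, using notation (N3) for $e_1,e_2$ when relevant, and examine the twelve segments of $a*(T_1\cup T_2)\cup b*(T_1\cup T_2)$. Each such segment must receive a colour in $\{c_1,\ldots,c_5\}$ and must cross or be incident to every other segment of its own colour. Since all five non-$c_0$ colours are now pre-committed on $T_1\cup T_2$, there is no slack to introduce new colours, and the $\circeq$-style bookkeeping of Lemma~\ref{l:/TT}'s proof translates into a rigid system of forced identifications. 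In each sub-case I expect this to produce either a segment with no admissible colour, or two geometrically disjoint non-incident segments forced into a common colour class---either way, a contradiction.

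The main obstacle is precisely this last step. In Lemma~\ref{l:/TT}, starting from $|\gamma(T_1\cup T_2)|=4$ one had to exhibit merely one additional colour; here, starting from $|\gamma(T_1\cup T_2)|=5$, we must show that no proper extension of the $T_1\cup T_2$ colouring to all of $\qq\setminus\{ab\}$ fits within the five available colours. Because the extra colour on $T_1\cup T_2$ gives that configuration additional slack, the contradictions have to be extracted from the geometry at $a$ and $b$ and the $C_{3,3}$-structure of $T_1\cup T_2$, so the case analysis is liable to be lengthier and more delicate than that of Lemma~\ref{l:/TT}.
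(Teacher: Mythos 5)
Your reduction step is correct and is a genuinely nice observation: if some colour $c_i\in\{c_1,\dots,c_5\}$ were absent from $\gamma(T_1\cup T_2)$, every segment of colour $c_i$ would be incident with $a$ or $b$, so recolouring $ab$ with $c_i$ yields a proper $5$-colouring of $D(Q)$, contradicting Lemma~\ref{l:/TT}. This legitimately pins down $\gamma(T_1\cup T_2)=\{c_1,\dots,c_5\}$. However, what follows is not a proof but a declaration of intent: you write that you ``expect'' the case analysis to produce a contradiction and you yourself identify this as ``the main obstacle.'' That remaining step is the entire substance of the lemma, and it is missing. Nothing you have written rules out a proper colouring of the twelve segments of $\{a,b\}*(T_1\cup T_2)$ using only $c_1,\dots,c_5$ when all five colours already appear on $T_1\cup T_2$; you would have to exhibit, in every sub-case, an uncolourable segment or two disjoint non-incident segments forced into one class. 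Note also that your reduction changes which sub-cases arise: with $|\gamma(T_1\cup T_2)|=5$ you must handle $(|\gamma(T_1)|,|\gamma(T_2)|)=(1,1)$ and $(3,1)$-type splits that do not occur in the proof of Lemma~\ref{l:/TT} (where $|\gamma(T_1\cup T_2)|=4$ is assumed), so you cannot simply transplant that bookkeeping.

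For comparison, the paper does not argue by contradiction at all. It starts from $|\gamma(T_1\cup T_2)|\geq 4$ (Corollary~\ref{c:neq6}), splits on $(|\gamma(T_1)|,|\gamma(T_2)|)$ with both values at most $2$, and in each case directly exhibits six pairwise distinct colours on $\qq\setminus\{ab\}$ via chains of forced identifications ($\circeq$) ending at segments incident with $a$ or $b$; together with $c_0$ this gives seven. Your absorption idea could in principle be combined with such an analysis, but as it stands the proposal establishes only the easy half of the argument.
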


\begin{proof}  By rotating $Q$ an angle $\pi$ around the origin, if necessary, we may assume w.l.o.g that 
$T_1\cup T_2$ lies on the right semiplane of the line spanned by $ab$. Similarly, we may assume that 
$\gamma(ab)=c_0$. By Corollary~\ref{c:neq6}($i$) we know that $|\gamma(T_1\cup T_2)|\geq 4$. 
We proceed similarly as in the proof of Lemma~\ref{l:/TT}.

\noindent (1) Suppose that $|\gamma(T_1)|=|\gamma(T_2)|=1$. Assume w.l.o.g. that $\gamma(T_1)=c_1$ and $\gamma(T_2)=c_2$. Then 
$\gamma(t^1_1t^1_2)=c_3, \gamma(t^2_1t^2_2)=c_4$, $\gamma(t^3_1t^3_2)=c_5$, and so either $\gamma(t^1_1b)$ or
 $\gamma(t^1_2a)$ must be the required color. 

\noindent (2)  Suppose that $|\gamma(T_{j})|=2$ and $|\gamma(T_{3-j})|=1$ for some $j\in \{1,2\}$. From $|\gamma(T_1\cup T_2)|\geq 4$ we
know that there exists $e\in\{t^1_1t^1_2, t^2_1t^2_2, t^3_1t^3_2\}$ 
such that $\gamma(e)\notin \gamma(T_1)\cup  \gamma(T_2)$. 

\noindent (2.1) Suppose that $j=1$. Let $\gamma(T_1)=\{c_1,c_3\}, \gamma(T_2)=\{c_2\}, \gamma(e_1)=c_1$ and $\gamma(e)=c_4$.
 If $e=t^1_1t^1_2$, then $\gamma(t^3_2a)$ and $\gamma(t^2_2b)$ are the required colors, and if 
 $e=t^2_1t^2_2$ then $\gamma(t^1_2a)$ and $\gamma(t^3_2b)$ are the required colors.

Thus we can assume that $e=t^3_1t^3_2$. Then $\gamma(t^1_2a)=c_5$ and $\gamma(t^3_2b)\circeq c_4$.
If $e_1=t^1_1t^2_1$ then $\gamma(t^1_1b)\circeq c_1$ and so $\gamma(t^2_1t^2_2)$ is the required color.
 Then we can assume that $e_1=t^l_1t^3_1$ for some $l\in \{1,2\}$. Then $\gamma(t^l_1t^2_2)\circeq c_1$, 
 $\gamma(t^3_1t^2_2)\circeq c_1$, $\gamma(bt^3_1)\circeq c_4$ and  $\gamma(at^3_2)\circeq c_5$. 
These imply that $\gamma(t^l_1t^1_2)$ is the required color.

\noindent (2.2) Suppose that $j=2$. Let $\gamma(T_1)=\{c_1\}, \gamma(T_2)=\{c_2,c_4\},  \gamma(e_2)=c_2$, and $\gamma(e)=c_3$.
 If $e=t^2_1t^2_2$, then $\gamma(t^1_1a)$ and $\gamma(t^3_1b)$
are the required. Similarly, if $e=t^3_1t^3_2$ then $\gamma(t^2_1a)$ and $\gamma(t^1_1b)$ are the required colors.

Thus we may assume that $e=t^1_1t^1_2$. Then $\gamma(t^3_1b)=c_5$ and  $\gamma(t^1_1a)\circeq c_3$.
If $e_2=t^2_2t^3_2$ then $\gamma(t^2_1t^2_2)\circeq c_2$, and so $\gamma(at^3_2)$ is the required. Then 
  $e_2=t^1_2t^l_2$ for some $l\in \{2,3\}$ and so $\gamma(t^2_1t^1_2)\circeq c_2, \gamma(t^2_1t^l_2)\circeq c_2$
  and $\gamma(at^1_2)\circeq c_3$. These imply that $\gamma(bt^1_1)\circeq c_5$ and so $\gamma(t^3_1t^l_2)$  is the required.  

\noindent (3) Suppose that $|\gamma(T_1)|=2=|\gamma(T_2)|$. Let $\gamma(T_1)=\{c_1, c_3\}, \gamma(T_2)=\{c_2, c_4\}, \gamma(e_1)=c_1,$ and $\gamma(e_2)=c_2$. 
We need to show the existence of two additional colors, say $c_5$ and $c_6$.  

\noindent (3.1) Suppose that $e_1=t^1_1t^2_1$. Then one of $\gamma(bt^1_1)=c_1$ or $\gamma(at^2_1)=c_1$ holds, as otherwise we are done. 

$\bullet$ Suppose that $\gamma(bt_1^1)=c_1$. Then $\gamma(at^2_1)=c_5$. 
Note that if $e_2=t_2^lt^3_2$ for some $l\in \{1,2\}$, then $\gamma(bt_2^l)\circeq c_2$, $\gamma(bt^3_2)\circeq c_2$ and $\gamma(at^3_2)\circeq c_5$. Since these imply
$\gamma(t^2_1t^l_2)=c_6$, we can assume $e_2=t^1_2t^2_2$. Then $\gamma(bt^1_2)\circeq c_2$, $\gamma(bt^2_2)\circeq c_2$, $\gamma(bt^3_2)\circeq c_4$ and
 $\gamma(t^2_1t^1_2)\circeq c_5$, and so $\gamma(at^2_2)$ must be $c_6$. 

$\bullet$ Suppose that $\gamma(at_1^2)=c_1$. Then $\gamma(bt^1_1)=c_5$. 
Note that if $e_2=t^1_2t^l_2$ for some $l\in \{2,3\}$, then $\gamma(at^1_2)\circeq c_2$, $\gamma(bt^l_2)\circeq c_5$ and $\gamma(at^1_1)\circeq c_1$.
Since these imply
 $\gamma(t^2_1t^l_2)=c_6$, we can assume $e_2=t^2_2t^3_2$. Then $\gamma(at^3_2)\circeq c_2$, $\gamma(bt^2_2)\circeq c_5$, $\gamma(at^1_1)\circeq c_1$, and so 
 $\gamma(t^2_1t^2_2)=c_6$. 

\noindent (3.2) Suppose that $e_1=t^1_1t^3_1$. Then either $\gamma(at^1_1)=c_1$ or $\gamma(bt^3_1)=c_1$  holds, as otherwise we are done. 

$\bullet$ Suppose that $\gamma(at_1^1)=c_1$. Then $\gamma(bt^3_1)=c_5$. If $t_2^3\in e_2$, then 
 $\gamma(t_2^3a)\circeq c_2, \gamma(l(e_2)b)\circeq c_5$, $\gamma(t^3_1l(e_2))\circeq c_5$, $\gamma(at^3_1)\circeq c_1$, and 
$\gamma(bt^1_1)=c_6$. If $t_2^3\notin e_2$, then $e_2=t_2^1t_2^2$, and so  $\gamma(t_2^1a)\circeq c_2, \gamma(t_2^2b)\circeq c_5$, $\gamma(t^3_1t_2^2)\circeq c_5$, 
$\gamma(at^3_1)\circeq c_1$, and  $\gamma(bt^1_1)=c_6$.

$\bullet$ Suppose now that $\gamma(b t_1^3)=c_1$. Then $\gamma(at^1_1)=c_5$. Note that if $e_2=t^1_2t^l_2$ for some $l\in \{2,3\}$, then $\gamma(bt^l_2)\circeq c_2$,
 $\gamma(at^1_2)\circeq c_5$, $\gamma(bt^1_1)\circeq c_1$, and so $\gamma(t^3_1t^1_2)=c_6$. Suppose now that $e_2=t^2_2t^3_2$.
 Then $\gamma(bt^2_2)\circeq c_2$ and $\gamma(bt^3_2)\circeq c_2$, $\gamma(at^3_2)\circeq c_5$,  $\gamma(bt^1_1)\circeq c_1$, 
 and hence $\gamma(t^3_1t^2_2)=c_6$.

 \noindent (3.3) Suppose that $e_1=t^2_1t^3_1$.  Then either $\gamma(at^2_1)=c_1$ or $\gamma(bt^3_1)=c_1$  holds, as otherwise we are done. 

$\bullet$ Suppose that $\gamma(at^2_1)=c_1$. Then $\gamma(bt^3_1)=c_5$. Note that if $e_2=t^1_2t^l_2$ for some $l\in \{2,3\}$, then $\gamma(at^1_2)\circeq c_2$,
 $\gamma(bt^l_2)\circeq c_5$, $\gamma(at^3_1)\circeq c_1$, $\gamma(at^1_1)\circeq c_3$, $\gamma(bt^2_1)\circeq c_5$ and so $\gamma(t^3_1t^l_2)=c_6$.  
 Similarly, if $e_2=t^2_2t^3_2$ then $\gamma(at^3_2)\circeq c_2$, $\gamma(bt^2_2)\circeq c_5$,  $\gamma(at^3_1)\circeq c_1$, $\gamma(at^1_1)\circeq c_3$,
 $\gamma(bt^2_1)\circeq c_5$, and so $\gamma(t^3_1t^2_2)=c_6$.  
 
 $\bullet$ Suppose that $\gamma(bt^3_1)=c_1$.  Then $\gamma(at^2_1)=c_5$.  Note that if $e_2=t^l_2t^3_2$ for some $l\in \{1,2\}$, then 
 $\gamma(bt^l_2)\circeq c_2$, $\gamma(bt^3_2)\circeq c_2$ and $\gamma(at^3_2)\circeq c_5$. Since these imply
 $\gamma(t^2_1t^l_2)=c_6$, we can assume $e_2=t^1_2t^2_2$. Then $\gamma(bt^1_2)\circeq c_2$, $\gamma(bt^2_2)\circeq c_2$,
  $\gamma(bt^3_2)\circeq c_4$, $\gamma(t^2_1t^1_2)\circeq c_5$, and so $\gamma(at^2_2)=c_6$. 
 \end{proof}
 
 
 \begin{lemma}\label{l:3vs4}
Let $a\in A, b \in B$ and let $T_j$ be the triangle in $\{T_1, T_2\}$ that is closest to $ab$. Let $Q:=T_j\cup \{a, b,t\} $ with $t \in T_{3-j}$ 
and let $\gamma$ be a coloring of $D(Q)$. If $\gamma(ab)\neq \gamma(xt)$ for some fixed $x\in \{a,b\}$ and $\gamma(\ell)\notin \{\gamma(ab), \gamma(xt)\}$
for any $\ell\in \qq \setminus \{ab, xt\}$, then $|\gamma(Q)|\geq 6$.
\end{lemma}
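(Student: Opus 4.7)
By the order-type symmetry $A\cup T_2\sim B\cup T_1$ of $X$, we may assume $T_j=T_1$ and hence $t\in T_2$; the case $T_j=T_2$ follows symmetrically. Write $c_0:=\gamma(ab)$ and $c_1:=\gamma(xt)$. By hypothesis these two colors appear on no other segment of $\qq$ and $c_0\ne c_1$, so four additional distinct colors must be exhibited among the remaining $13$ segments of $\qq$.

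The strategy is the standard one described at the start of Section~\ref{s:lemmas}: perform a case analysis on $|\gamma(T_1)|\in\{1,2,3\}$ and, in each case, use forced equalities $\gamma(g)\circeq c$ arising from the star-or-thrackle constraint on the monochromatic classes identified so far to propagate color information until enough new colors are located. Note that $\gamma(T_1)\cap\{c_0,c_1\}=\emptyset$, since no segment of $\tt_1$ equals $ab$ or $xt$, so each color inside $T_1$ is automatically new.

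In the case $|\gamma(T_1)|=3$, five distinct colors are already fixed and we need only a sixth. Among the ten remaining segments (the segment joining $\{a,b,t\}\setminus\{x,t\}$ to $t$, the three segments of $t*T_1$, and the six segments of $\{a,b\}*T_1$ apart from $xt$), I argue that since $T_1$ is the triangle closest to $ab$ it separates $\{a,b\}$ from $t$, and each of $\{c_2,c_3,c_4\}:=\gamma(T_1)$ is constrained by the star-or-thrackle rule; this should force at least one of the segments in $t*T_1$ or the segment joining the non-$x$ vertex of $\{a,b\}$ to $t$ into a new color.

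In the case $|\gamma(T_1)|=2$, let $e_1$ be as in (N3), with $\gamma(T_1)=\{c_2,c_3\}$ and $\gamma(e_1)=c_2$; two further colors must be found. I would split on which of the three pairs of points of $T_1$ forms $e_1$ and on the position of $t$ relative to the line spanned by $e_1$. In each subcase one tracks the colors of the six segments joining $l(e_1)$ and $r(e_1)$ to $\{a,b,t\}$, using incidence with $e_1$ and crossings with the two $c_3$-colored sides of $T_1$ to force all $\circeq$ equalities, and then selects two specific remaining segments whose colors must be new.

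In the case $|\gamma(T_1)|=1$, with $\gamma(T_1)=\{c_2\}$, we need three more colors. Here I first observe that for each $u\in\{a,b,t\}$ at most one of the three segments $ut_1^1,ut_1^2,ut_1^3$ can be colored $c_2$, namely the one for which $u$ lies on the opposite side of the third edge of $T_1$, so $c_2$ may repeat only a bounded number of times outside $T_1$. Then a careful chase of forced colors on the nine segments of $\{a,b,t\}*T_1$ together with $bt$, initiated by $\gamma(ab)=c_0$ and $\gamma(xt)=c_1$, yields three new colors through a sub-subcase analysis on $t\in\{t_2^1,t_2^2,t_2^3\}$ and on $x\in\{a,b\}$.

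The main obstacle will be this last case: the monochromaticity of $T_1$ leaves many potential color repetitions on the ten external segments to rule out, requiring patient bookkeeping of $\circeq$ deductions in the spirit of Lemma~\ref{l:/TT} and Lemma~\ref{l:/TTb}. I expect most of the geometric input used there (exploiting that $A\cup T_1\sim C_{5,3}$ and $T_2\cup B\sim C_{3,5}$, together with $T_1$ being closest to $ab$) to transfer with only cosmetic modifications.
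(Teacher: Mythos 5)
Your proposal is an outline rather than a proof: in each of the three cases the decisive steps are announced (``should force'', ``I would split'', ``I expect \dots to transfer'') but never carried out, and the one case you do try to argue ($|\gamma(T_1)|=3$) ends in a conjecture about what the star-or-thrackle rule ``should'' produce. The concrete idea you are missing is the one the paper builds everything on. Let $y$ be the point of $\{a,b\}$ other than $x$. The triangle $\Delta(a,b,t)$ contains $T_1$ in its interior, so (i) its three sides receive three pairwise distinct colors $c_0=\gamma(ab)$, $c_1=\gamma(xt)$, $c_2=\gamma(yt)$ --- the third one is free, directly from the hypothesis $\gamma(yt)\notin\{c_0,c_1\}$ --- and (ii) no segment with both endpoints in $T_1$ crosses or touches a side of $\Delta(a,b,t)$, so $\gamma(T_1)$ is disjoint from $\{c_0,c_1,c_2\}$ and $|\gamma(Q)|\geq|\gamma(T_1)|+3$. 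This disposes of $|\gamma(T_1)|=3$ in one line and leaves only \emph{one} further color to find when $|\gamma(T_1)|=2$. Because you never bank $\gamma(yt)$ as a guaranteed new color, you start every case one color short of the paper and correspondingly overestimate the amount of chasing required.

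The case you flag as the main obstacle, $|\gamma(T_1)|=1$, is in fact the easiest once $T_1\subset\Delta(a,b,t)$ is exploited: join the three corners of $T_1$ to the three corners of $\Delta(a,b,t)$ by three pairwise disjoint segments $s_1,s_2,s_3$; being pairwise disjoint they receive three distinct colors, none of which can be $c_0$ or $c_1$ (by hypothesis) nor the color of $T_1$ (each $s_i$ leaves $T_1$ at a corner and therefore misses the opposite side of $T_1$), giving six colors at once. Your plan for this case --- bounding the repetitions of $\gamma(T_1)$ on $\{a,b,t\}*T_1$ and then running a nine-segment color chase split over $t\in\{t_2^1,t_2^2,t_2^3\}$ and $x\in\{a,b\}$ --- is not wrong in principle, but it is entirely unexecuted, and your closing assertion that the geometric input of Lemmas~\ref{l:/TT} and~\ref{l:/TTb} ``transfers with only cosmetic modifications'' is precisely the part that would have to be verified. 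As written, the proposal does not establish the lemma.
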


\begin{proof} By rotating $Q$ an angle $\pi$ around the origin and relabeling the points of $Q$, if necessary, we may assume w.l.o.g. that 
$T_j\cup \{t\}$ lies on the right semiplane of the line spanned by $ab$. Then $T_j=T_1$ and $t\in T_2$. 
Let $x\in \{a,b\}$ be as in the statement of lemma, let $y$ be such that $\{x,y\}=\{a,b\}$, and let $\Delta:=\Delta( a,b,t)$. For brevity, let $c_0=\gamma(ab)$, $c_1=\gamma(xt)$ and $c_2=\gamma(yt)$.
Then $\gamma(\Delta)=\{c_0, c_1, c_2\}$. Since $|\gamma(Q)|\geq |\gamma(T_1)|+|\gamma(\Delta)|$, we can
assume $|\gamma(T_1)|\in \{1,2\}$, as otherwise we are done.

\noindent (1) Suppose that $\gamma(T_1)=\{c_4\}$. Then we can always connect the corners of $T_1$ with the corners of $\Delta$ 
(without creating any intersections) by means of three pairwise disjoint segments $s_1, s_2$ and $s_3$. 
Since $\gamma(s_1), \gamma(s_2), \gamma(s_3), c_0, c_1, c_4$ are pairwise distinct, we are done. 
  
\noindent (2) Suppose now that $\gamma(T_1)=\{c_4,c_5\}$. Let $\gamma(e_1)=c_4$. Clearly, $c_0, c_1, c_2, c_4$ and $c_5$ are pairwise distinct.  

\noindent $\bullet$ Suppose that $e_1=t^1_1t^3_1$. Then $\{c_2, c_4\}=\{\gamma(t^1_1y), \gamma(t^3_1t)\}$ or the $6$th color is provided by some of
$t^1_1y$ or $t^3_1t$. That equality implies that some of $t^1_1x$ or $t^3_1x$ provides the $6$th color. 

\noindent $\bullet$ Suppose that $e_1=t^1_1t^2_1$. If $a=x$,
 then $\gamma(t^1_1x)\circeq c_4$, $\gamma(t^2_1x)\circeq c_4$ and either $\gamma(t^1_1y)$ or $\gamma(t^2_1t)$ is the $6$th color. Then $b=x$ and so 
 $\gamma(t^1_1x)\circeq c_4$, $\gamma(t_1^2t)\circeq c_2$, $\gamma(t^1_1y)\circeq c_4$,  $\gamma(t_1^2y)\circeq c_2$, and 
 either $\gamma(t^2_1x)$ or $\gamma(t_1^3t)$ is the $6$th color.

\noindent $\bullet$ Suppose that $e_1=t^2_1t^3_1$. If $a=x$, then $\gamma(t^2_1x)\circeq c_4$, $\gamma(t^3_1x)\circeq c_4$,
 $\gamma(t^3_1y)\circeq c_2$, $\gamma(t^3_1t)\circeq c_2$, and either $\gamma(t^1_1x)$ or $\gamma(t^2_1y)$ is the $6$th color.
Then $b=x$ and so $\gamma(t^3_1x)\circeq c_4$,  $\gamma(t^2_1y)\circeq c_2$, $\gamma(t^2_1t)\circeq c_2$ and $\gamma(t^3_1t)\circeq c_4$ 
 These imply that either $\gamma(t^2_1x)$ or $\gamma(t^1_1y)$ is the $6$th color.
\end{proof}

 
\begin{lemma}\label{l:4vs5}
Let $a\in A, b \in B$ be such that $T_1$ lies on the right semiplane of the line spanned by $ab$. Let $Q:=T_{1}\cup \{a, b,t_2^i, t_2^j\}$ with $1\leq i<j\leq 3$, and
 let $\Delta':=\Delta(a, t_2^i, t_2^j)$. If $\gamma$ is a  coloring of $D(Q)$, $|\gamma(\Delta')|=1$,  
$\gamma(ab)\neq \gamma(bt_2^j)$ and $\gamma(\ell)\notin \{\gamma(ab), \gamma(bt_2^j)\}$ for any $\ell\in \qq \setminus \{ab, bt_2^j\}$, then $|\gamma(Q)|\geq 7$.
\end{lemma}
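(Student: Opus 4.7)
The plan is to extract three distinct colors from the hypotheses and then force four more by case analysis on $m := |\gamma(T_1)|$. Set $c_0 := \gamma(ab)$, $c_1 := \gamma(bt_2^j)$, and let $c_3$ be the common color of the three sides of $\Delta'$, which exists because $|\gamma(\Delta')| = 1$. The hypotheses give $c_0 \neq c_1$ and make both of these classes singletons; since $at_2^i \notin \{ab, bt_2^j\}$, it follows that $c_3 \notin \{c_0, c_1\}$. So already three distinct colors are in use and it suffices to exhibit four more among the remaining $16$ segments.

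The crucial geometric lever is the following observation: any segment of color $c_3$ must be incident to some vertex of $\Delta'$, because a straight segment cannot cross all three sides of a triangle. In particular, no edge of $T_1$ and no segment $bt_1^k$ ($k=1,2,3$) can carry color $c_3$. Together with the singletoness of $c_0$ and $c_1$, this severely restricts which colors can appear on the cross segments between $T_1$ and $\{a, b, t_2^i, t_2^j\}$, which is what makes the forthcoming forcing arguments tractable.

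I would then split on $m \in \{1,2,3\}$. When $m = 3$, the three colors in $\gamma(T_1)$ are new (by the singletoness of $c_0, c_1$ and the triangle-crossing observation for $c_3$), giving six colors; the seventh is forced by a short analysis of $b * T_1$ and $a * T_1$, using that each $bt_1^k$ lies in $\gamma(T_1) \cup \{\text{new}\}$ and that the three star edges at $b$ cannot all be absorbed into $\gamma(T_1)$ without triggering a new color on some $at_1^k$ or $t_1^k t_2^i$. When $m = 2$, I use notation (N3), write $\gamma(T_1) = \{c_4, c_5\}$ with $\gamma(e_1) = c_4$, and split on which of $t_1^1 t_1^2, t_1^1 t_1^3, t_1^2 t_1^3$ is $e_1$; within each sub-case I chain forced colors across $at_1^k, bt_1^k, t_1^k t_2^i, t_1^k t_2^j$, mirroring the templates in Cases~(3.1)--(3.3) of the proof of Lemma~\ref{l:/TTb}. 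The hardest case will be $m = 1$: $T_1$ contributes a single color $c_4$, so only four colors are known a priori and three more must be produced. Here the forcing chains are the longest; the fresh colors have to come from the twelve segments connecting $T_1$ to $\{a, b, t_2^i, t_2^j\}$, and the $c_3$-incidence restriction together with the star structure at $a$ and $b$ is what closes out each sub-configuration. As in Lemmas~\ref{l:/TTb} and~\ref{l:3vs4}, the bulk of the labour is a systematic walk through geometric sub-configurations indexed by the pair $\{i, j\} \subset \{1, 2, 3\}$ and, when $m \geq 2$, the position of $e_1$ inside $T_1$.
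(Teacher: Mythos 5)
Your setup is sound as far as it goes (the three colors $\gamma(ab)$, $\gamma(bt_2^j)$, and the common color of $\Delta'$ are indeed pairwise distinct, and your observation that any segment in the $\Delta'$-class must be incident with a vertex of $\Delta'$ is correct), but the proposal has a genuine gap: it is an outline that defers exactly the part of the argument that carries the difficulty. You acknowledge that the case $|\gamma(T_1)|=1$ requires producing \emph{three} additional colors from the cross segments, and for that case you offer only the assertion that ``the forcing chains are the longest'' and that the incidence restriction ``closes out each sub-configuration.'' Nothing is actually forced or exhibited there, and the $|\gamma(T_1)|\in\{2,3\}$ cases are likewise described rather than proved. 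As written, the lemma is not established.

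The reason the paper's proof is short is an observation your proposal misses: the three segments $at_1^2$, $bt_1^1$ and $t_1^3t_2^j$ are pairwise disjoint, none of them is $ab$ or $bt_2^j$ (so their colors avoid $c_0=\gamma(ab)$ and $c_1=\gamma(bt_2^j)$ by hypothesis), and none of them can carry the $\Delta'$-color (each fails to meet some side of $\Delta'$). Together with $\gamma(at_2^j)$, which is the $\Delta'$-color, this yields \emph{six} pairwise distinct colors $c_0,\ldots,c_5$ before any case analysis, so that in every case only \emph{one} further color is needed. In particular your ``hardest case'' $|\gamma(T_1)|=1$ becomes immediate: the single color of $T_1$ is disjoint from all six and is the seventh. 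The remaining work reduces to noting that if $|\gamma(T_1)|\geq 2$ one may assume $\gamma(T_1)\subset\{c_3,c_4,c_5\}=\{\gamma(at_1^2),\gamma(bt_1^1),\gamma(t_1^3t_2^j)\}$, and then a few short forced chains (driven by $bt_2^i$ and a handful of segments in $a*T_1$, $b*T_1$, $T_1*\{t_2^i\}$) produce the seventh color in each of the subcases $\{c_3,c_4\}$, $\{c_3,c_5\}$, $\{c_4,c_5\}$ and the case $|\gamma(T_1)|=3$. Without identifying a batch of pairwise disjoint cross segments of this kind, your plan commits you to a much larger forcing analysis that the proposal does not carry out.
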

\begin{proof} For brevity, let $c_0=\gamma(ab)$, $c_1=\gamma(bt_2^j)$, $c_2=\gamma(at_2^j)$ and $\Delta=\overline{Q}$. 
Then $\gamma(\Delta)=\{c_0, c_1, c_2\}$. 
Let $c_3=\gamma(at^2_1)$, $c_4=\gamma(bt^1_1)$ and $c_5=\gamma(t^3_1t_j^2)$. Clearly, $c_0, c_1, \ldots , c_5$ are pairwise distinct.
We need to show the existence of one additional color.  If $|\gamma(T_1)|=1$, then $\gamma(T_1)\notin \{c_0, c_1, \ldots ,c_5\}$ is
the required color. Then we may assume that $|\gamma(T_1)|\geq 2$ and $\gamma(T_1)\subset \{c_3, c_4, c_5\}$,
 as otherwise we are done. We also note that if  $|\gamma(T_1)|=3$,  then $\gamma(T_1)=\{c_3, c_4, c_5\}$ and $bt_2^i$ provides the required color.
Then we can assume that $|\gamma(T_1)|=2$. If $\gamma(T_1)=\{c_3, c_4\}$, then $\gamma(bt_2^i)\circeq c_5$, $\gamma(at^3_1)\circeq c_3$, 
$\gamma(t^2_1t^3_1)\circeq c_3$,  $\gamma(t^1_1t^2_1)\circeq c_4$ and $\gamma(t^1_1t^3_1)\circeq c_4$. These imply that either 
$\gamma(at^1_1)$ or $\gamma(bt^2_1)$ is the required color. Similarly, if $\gamma(T_1)=\{c_3, c_5\}$, then $\gamma(t^1_1t^3_1)\circeq c_5$, 
$\gamma(bt_2^i)\circeq c_4$, $\gamma(t^1_1t^2_1)\circeq c_3$, $\gamma(at^1_1)\circeq c_3$ and so $\gamma(t^2_1t_2^i)$  is the required color.
Finally, if $\gamma(T_1)=\{c_4, c_5\}$, then $\gamma(bt_2^i)$  is the required color. 
\end{proof}

Let us introduce some notation that we will use in the rest of the paper. Let $Q\subseteq X$ and let $e\in {\mathcal Q}$. If no segment of $\qq$ crosses $e$, 
we will say that $e$ is {\em clean} in $\qq$.  Similarly, if $p\in X$ is not incident with  $e$, then $d(p,e)$ will be the euclidian distance from $p$ to $e$.  


\vskip 0.1cm
\begin{definition}\label{d:U}
Let $\gamma$ be a  coloring of $D(X)$ and let $\ell$ be a segment in $A*B$. Then, (1) we shall use $U_{\ell}$ to denote the subset
 (may be empty) of $T_1\cup T_2$ such that $v\in U_{\ell}$ if and only if the two segments that join $v$ with the endpoints of $\ell$ are colored with the 
 same color $c$ and $c\neq \gamma(\ell)$; (2) if $|U_{\ell}|<6$, we let $v_0, v_1, \ldots ,v_s$ be such that $(T_1\cup T_2)\setminus U_{\ell}=\{v_0, v_1, \ldots ,v_s\}$ 
 and  $d(v_i, \ell)<d(v_j, \ell)$ for $i<j$; (3) if $|U_{\ell}|<6$, $\Delta_{\ell}:=\Delta(\ell, v_0)$, and $\ell_1$ and $\ell_2$ will be the other sides of $\Delta_{\ell}$;   
(4) if $|U_{\ell}|<5$, $\Delta_i:=\Delta(\ell_i, v_1)$ for $i=1,2$.  
\end{definition}


\begin{proposition}\label{p:starvgood} Let $\Delta$ be a triangle with corners in $A\cup B$, let $\ell$ be a side of $\Delta$ such that $\ell\in A*B$, and let
 $Q:=T_1\cup  \Delta \cup T_2$.  If $\gamma$ is an optimal coloring of $D(Q)$ and $\Delta$ contains $T_1\cup T_2$ in its interior, then $\ell$ belongs to a star of 
 $\gamma(Q)$ or $|\gamma(Q)|\geq |Q|-2$.      
\end{proposition}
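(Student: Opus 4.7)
The plan is to prove the contrapositive: assuming $\ell$ does not belong to a star of $\gamma$, I will deduce $|\gamma(Q)|\geq |Q|-2=7$.

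The key geometric input is that, writing $\Delta=\{a,b,w\}$ with $\ell=ab$, every point of $Q\setminus\{a,b\}=\{w\}\cup T_1\cup T_2$ lies strictly on the same (interior) side of the line through $\ell$. Therefore no segment of $\qq$ crosses $\ell$, and every segment in the color class of $\ell$ must be incident to $a$ or $b$. Because $\ell$ is not in a star, this class contains segments $ap_1$ (with $p_1\neq b$) and $bp_2$ (with $p_2\neq a$); as they must cross or share an endpoint, either $p_1=p_2$ or $ap_1$ properly crosses $bp_2$. The same geometry rules out the mixed possibilities in which exactly one of $p_1,p_2$ equals $w$, since then one of $aw$ or $bw$ (a side of $\Delta$) and the other segment (lying inside $\Delta$) would be disjoint. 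Hence either \emph{Scenario W} holds, in which $p_1=p_2=w$ and $\Delta$ is monochromatic; or \emph{Scenario T} holds, in which $p_1,p_2\in T_1\cup T_2$.

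In Scenario W, the same geometric argument shows that no segment of $\qq$ other than the three sides of $\Delta$ can share the color $c_0:=\gamma(\ell)$: any candidate would be disjoint from at least one side of $\Delta$. Hence the class of $\ell$ is exactly $\{ab, aw, bw\}$, and the restriction of $\gamma$ to the segments with endpoints in $Q_0:=T_1\cup\{a,b\}\cup T_2$ uses color $c_0$ only on $\ell$. Lemma~\ref{l:/TTb} then gives $|\gamma(Q_0)|\geq 7$, whence $|\gamma(Q)|\geq 7$.

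In Scenario T, the restriction of $\gamma$ to the segments with endpoints in $Q_0$ still has $\ell$ in a non-star class (containing $\ell, ap_1, bp_2$). The remaining task is therefore to strengthen Lemma~\ref{l:/TT} to the assertion that, under this extra non-star hypothesis, $|\gamma(Q_0)|\geq 7$. I would establish this by following the case-analysis template of Lemma~\ref{l:/TTb}, splitting on $|\gamma(T_1)|, |\gamma(T_2)|\in\{1,2\}$ (values $\geq 3$ being handled by the separability of $T_1\cup T_2$ together with Corollary~\ref{c:neq6}). In each sub-case I would propagate the colors forced by $\gamma(ab)=\gamma(ap_1)=\gamma(bp_2)=c_0$ and by the convex quadrilateral $a,p_2,p_1,b$ (or the monochromatic triangle $\{ab,av,bv\}$ when $p_1=p_2=v$), and use the crossing/incidence constraints inside $T_1$ and $T_2$ to pinpoint a segment forced to receive a seventh color. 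The main obstacle will be precisely this case analysis in Scenario T: although similar in spirit to the proof of Lemma~\ref{l:/TTb}, it is more delicate because $c_0$ already occupies three segments of $Q_0$, so the remaining classes are constrained differently, and one must further split on the relative positions of $p_1,p_2$ within $T_1\cup T_2$ and on which $T_i$-segments are monochromatic.
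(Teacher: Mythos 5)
Your setup is sound: since $T_1\cup T_2\cup\{w\}$ lies on one side of the line through $\ell=ab$, the segment $\ell$ is clean, every segment in its class is incident with $a$ or $b$, and your reduction to Scenario~W versus Scenario~T is correct. Your treatment of Scenario~W also matches the paper's handling of the case $|\gamma(\Delta)|=1$: the class of $\ell$ is exactly the three sides of $\Delta$, so on $Q_0=T_1\cup\{a,b\}\cup T_2$ the color of $\ell$ appears only on $\ell$, and Lemma~\ref{l:/TTb} gives $|\gamma(Q_0)|\geq 7$.

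The gap is Scenario~T, which you do not actually prove. You reduce it to an unestablished strengthening of Lemma~\ref{l:/TT} (namely $|\gamma(Q_0)|\geq 7$ when the class of $\ell$ is a thrackle reaching into $T_1\cup T_2$ from both endpoints) and then only describe how you \emph{would} carry out the case analysis, explicitly flagging it as the main obstacle. That claimed strengthening is not a consequence of any lemma in the paper (Lemma~\ref{l:/TT} only gives $6$, and Lemma~\ref{l:/TTb} requires the class of $\ell$ to be a singleton), so as written the proof is incomplete. Moreover, the heavy analysis is unnecessary: in Scenario~T each of $aw$ and $bw$ is disjoint from one of $ap_1,bp_2$ (a segment from a vertex of $\Delta$ to an interior point misses the opposite side), so neither can share the color of $\ell$, and hence $|\gamma(\Delta)|\in\{2,3\}$. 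If $|\gamma(\Delta)|=3$, the separability of $T_1\cup T_2$ gives $|\gamma(Q)|\geq|\gamma(T_1\cup T_2)|+|\gamma(\Delta)|\geq 4+3=7$. If $\gamma(aw)=\gamma(bw)$, then, since $aw$ and $bw$ are clean, that class is a star with apex $w$, and Proposition~\ref{p:closed}~($iii$) together with Lemma~\ref{l:/TT} applied to $Q\setminus\{w\}=Q_0$ yields $|\gamma(Q)|\geq 6+1=7$. This is exactly the paper's argument: it splits on $|\gamma(\Delta)|$ rather than on the structure of the class of $\ell$, and thereby never needs the strengthened lemma you propose.
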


\begin{proof}  We note that $\ell$ is clean in $\qq$ and that $|Q|=9$. Then we must show that $|\gamma(Q)|\geq 7$. Since 
$|\gamma(Q)|\geq |\gamma(T_1\cup T_2)|+|\gamma(\Delta)|\geq 4+|\gamma(\Delta)|$, then either $|\gamma(\Delta)|\in \{1,2\}$ or we are done.  
\vskip 0.1cm
\noindent{$\bullet$} Suppose that $|\gamma(\Delta)|=1$. Assume first that $\Delta$ has a vertex in $a\in A$
and the other two in $b_p, b_q\in B$ with $p<q$. Since $T_1\cup T_2$ lies in the interior of $\Delta$, then $p\in \{1,2,3\}$ and $q\in \{4,5\}$.
By applying Lemma~\ref{l:/TTb} to $T_1\cup \{a, b_p\}\cup T_2$ we obtain $|\gamma(Q)|\geq 7$, as desired.
The case in which $\Delta$ has an endpoint in $b\in B$ and the other two in $A$ can be handled in a similar way.

\vskip 0.1cm
\noindent{$\bullet$} Suppose that  $|\gamma(\Delta)|=2$. Then two sides, say $\ell_1$ and $\ell_2$, of $\Delta$ receive the same color and they form a star of $\gamma|(Q)$.  
We can assume that $\ell\notin \{\ell_1, \ell_2\}$, as otherwise $\ell$ is in a star of $\gamma(Q)$, as claimed. Let $v$ be the common endpoint of $\ell_1$ and $\ell_2$. 
Since $\ell_1$ and $\ell_2$ are clean in $\qq$, the chromatic class $\gamma(\ell_1)=\gamma(\ell_2)$
is a star of $\gamma(Q)$ with apex in $v$, and hence $|\gamma(Q\setminus \{v\})|=|\gamma(Q)|-1$ by Proposition~\ref{p:closed}~($iii$). 
Since $\ell\in A*B$, then $|\gamma(Q\setminus \{v\})|\geq 6$ by Lemma~\ref{l:/TT} and so $|\gamma(Q)|\geq 7$ as required. 
\end{proof}


\begin{lemma}\label{l:TDT} Let $\Delta_0$ be a triangle with vertices in $A\cup B$, and let 
$Q\subseteq T_1 \cup \Delta_0 \cup T_2$ with $|Q|\geq 3$. If $\gamma$ is an optimal coloring of $D(Q)$, then $|\gamma(Q)|\geq |Q|-2$.   
\end{lemma}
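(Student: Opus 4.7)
First, by Proposition~\ref{p:n-2} combined with Proposition~\ref{p:closed}(i), it suffices to prove the bound for $Q_0:=T_1\cup\Delta_0\cup T_2$ (which has $|Q_0|=9$): once $\chi(D(Q_0))\geq 7$ is established, every $Q\subseteq Q_0$ with $|Q|\geq 3$ inherits $\chi(D(Q))\geq|Q|-2$ from Proposition~\ref{p:closed}(i). If $\Delta_0\subseteq A$ or $\Delta_0\subseteq B$, then $Q_0\subseteq T_1\cup A\cup T_2$ or $Q_0\subseteq T_1\cup B\cup T_2$ and Lemma~\ref{l:TXT} gives $\chi(D(Q_0))=7$ at once. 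This leaves the \emph{mixed} cases $\Delta_0=\{a_i,a_j,b_k\}$ and $\Delta_0=\{a_i,b_p,b_q\}$, which are symmetric via the order-type identity $A\cup T_2\sim B\cup T_1$ and the mirrored versions of the technical lemmas of Section~\ref{s:lemmas}; so I would only discuss $\Delta_0=\{a_i,a_j,b_k\}$ with $i<j$.

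Fix such a $\Delta_0$ and an optimal coloring $\gamma$ of $D(Q_0)$, and suppose for contradiction that $|\gamma(Q_0)|\leq 6$. Corollary~\ref{c:neq6}(i) forces $|\gamma(T_1\cup T_2)|\in\{4,5,6\}$. Since $a_ib_k,a_jb_k\in A*B$, Lemma~\ref{l:/TT} applied to the two $8$-point subsets $T_1\cup\{a_i,b_k\}\cup T_2$ and $T_1\cup\{a_j,b_k\}\cup T_2$ shows that each of them already uses the entire $6$-color palette of $\gamma$. Consequently, every color of $\gamma$ must appear simultaneously on a segment incident with $\{a_i,b_k\}\cup T_1\cup T_2$ and on one incident with $\{a_j,b_k\}\cup T_1\cup T_2$, which imposes strong coincidences on the $18$ cross-segments $\{a_i,a_j,b_k\}*(T_1\cup T_2)$ together with the three sides $a_ia_j,a_ib_k,a_jb_k$ of $\Delta_0$.

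When $\Delta_0$ contains $T_1\cup T_2$ in its interior, both $a_ib_k$ and $a_jb_k$ are clean in the segment-set of $Q_0$, and Proposition~\ref{p:starvgood} forces one of them into a star of $\gamma$; peeling off that star's apex via Proposition~\ref{p:closed}(iii) reduces the remaining configuration to one resolved by Lemma~\ref{l:/TT}, delivering the seventh color and the desired contradiction. In the remaining geometric configurations I would branch on the profile $(|\gamma(T_1)|,|\gamma(T_2)|)\in\{(1,1),(1,2),(2,1),(2,2)\}$ in the spirit of the proof of Lemma~\ref{l:TXT}, and propagate forced colors along the cross-segments using the $\circeq$-convention from Section~\ref{s:lemmas}: Lemma~\ref{l:/TTb} closes the argument whenever $a_ib_k$ or $a_jb_k$ is pinned to be the unique segment of its chromatic class, while Lemmas~\ref{l:3vs4} and~\ref{l:4vs5} take over once color-pinning effectively shrinks one of $T_1$ or $T_2$ to two active points.

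The main obstacle I anticipate is the subcase $|\gamma(T_1)|=|\gamma(T_2)|=2$: with four colors already absorbed by $T_1\cup T_2$ and a budget of at most two more colors for the $\Delta_0$-incident part, almost every cross-segment becomes color-forced, and ruling out that all the resulting forcings can coexist requires a careful $\circeq$-chase paralleling those inside the proofs of Lemmas~\ref{l:/TT}--\ref{l:4vs5}. Which technical lemma applies in a given branch is dictated by the geometric position of $b_k$ relative to $\{a_i,a_j\}$ and to $T_1\cup T_2$, which in turn determines which cross-segments are clean in $Q_0$.
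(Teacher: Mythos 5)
Your reduction to $Q_0=T_1\cup\Delta_0\cup T_2$, the dispatch of $\Delta_0\subseteq A$ or $\Delta_0\subseteq B$ via Lemma~\ref{l:TXT}, and the interior case via Proposition~\ref{p:starvgood} followed by peeling the star's apex with Proposition~\ref{p:closed}(iii) all match the paper's proof (note only that after peeling you need Lemma~\ref{l:TXT} as well as Lemma~\ref{l:/TT}, depending on which endpoint of the $A*B$ side is the apex). The genuine gap is the exterior case, which is where almost all of the work in the paper's proof lives: you announce that you \emph{would} branch on $\bigl(|\gamma(T_1)|,|\gamma(T_2)|\bigr)$ and chase forced colors, and you explicitly flag the $(2,2)$ subcase as an anticipated obstacle, but you never carry this out. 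That is not a proof of the hard case but a statement of intent, and the intended strategy is not the one that works in the paper. The paper instead branches on $|\gamma(\Delta_0)|\in\{1,2\}$: when $|\gamma(\Delta_0)|=1$ it verifies the hypotheses of Lemma~\ref{l:/TTb} directly, and when $|\gamma(\Delta_0)|=2$ it needs a substantial structural apparatus --- eliminating stars with apices at corners of $\Delta_0$ (Claim~\ref{cl:star}), a recoloring normalization (Claim~\ref{cl:l3=c2}), bounding the set $U_{\ell'}$ of Definition~\ref{d:U} via Claim~\ref{cl:holds6}, and then separability arguments on $\Delta_{\ell'}$, the quadrilateral $[]$, and $\Delta'_l$ (Claims~\ref{cl:xDelta'} and~\ref{cl:ch3}). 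None of these reductions appears in your plan, and without them a raw $\circeq$-chase over the cross-segments has no clear termination: the observation that both $8$-point sets $T_1\cup\{a_i,b_k\}\cup T_2$ and $T_1\cup\{a_j,b_k\}\cup T_2$ exhaust the $6$-color palette is true under your contradiction hypothesis but is never converted into a contradiction.

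A secondary issue: you dismiss the case $\Delta_0=\{a_i,b_p,b_q\}$ by appeal to $A\cup T_2\sim B\cup T_1$, but that order-type identity concerns only $8$ of the $16$ points and does not by itself give an order-type equivalence between $T_1\cup\{a_i,a_j,b_k\}\cup T_2$ and $T_1\cup\{a,b_p,b_q\}\cup T_2$. The paper avoids any such symmetry claim by treating both mixed cases uniformly, labelling the two sides of $\Delta_0$ lying in $A*B$ as $\ell,\ell'$ and the third side $\ell''$ as having both endpoints in $A$ or both in $B$.
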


\begin{proof} By Proposition~\ref{p:closed}~($i$), it is enough to verify the case in which $Q:=T_1 \cup \Delta_0 \cup T_2$. Then, we need to show that 
$|\gamma(Q)|\geq 7$. Since $|\gamma(Q)|\geq |\gamma(T_1\cup T_2)|+|\gamma(\Delta_0)|\geq 4+|\gamma(\Delta_0)|$, we can assume $|\gamma(\Delta_0)|\in \{1,2\}$. 

Suppose first that the 3 points of $\Delta_0$ are in $A$. Since
 $|\gamma(Q)|\leq 6$ implies $D(T_1\cup A\cup T_2)\leq |\gamma(Q)|+2\leq 8$, and this last contradicts Lemma~\ref{l:TXT}, 
 we conclude that $|\gamma(Q)|\geq 7$. An analogous reasoning shows that $|\gamma(Q)|\geq 7$ if  $\Delta_0\subset B$.  Then $\Delta_0$ 
 has at least one vertex in each of $A$ and $B$.
Let $\ell, \ell', \ell''$ be the sides of $\Delta_0$, and assume w.l.o.g. that $\ell, \ell'\in A*B$. Then $\ell''$ has both endpoints in either $A$ or $B$. 

{\sc Case 1}. Suppose that $T_1\cup T_2$ lies in the interior of $\Delta_0$. By Proposition~\ref{p:starvgood} we can assume that $\ell=ab$ belongs to a star with apex $v\in \{a,b\}$, as otherwise we are done. If $v\in \ell''$ then Lemma~\ref{l:/TT} implies $|\gamma(Q\setminus \{v\})|\geq 6$ and so  $|\gamma(Q)|\geq 7$. Similarly, if $v\notin \ell''$ then Lemma~\ref{l:TXT} and Proposition~\ref{p:closed}~$(i)$ imply that $|\gamma(Q\setminus \{v\})|\geq 6$, and so $|\gamma(Q)|\geq 7$.   
 

{\sc Case 2}. Suppose that $T_1\cup T_2$ lies in the exterior of $\Delta_0$. Assume w.l.o.g. that $\ell=ab$ (with $a\in A$ and $b\in B$) is closer to  $O=(0,0)$ than $\ell'$. Then, 
$\ell'$ and $\ell''$ are clean in $\qq$. 

\begin{claim}\label{cl:star} If $\gamma(Q)$ has a star with apex in a corner of $\Delta_0$, then $|\gamma(Q)|\geq 7$. 
\end{claim}
\noindent{\em Proof of Claim}~\ref{cl:star}. Suppose that $v$ is a corner of $\Delta_0$ that is apex of a star of $\gamma(Q)$. If such $v\in \ell''$, then 
$|\gamma(Q\setminus \{v\})|\geq 6$ by Lemma~\ref{l:/TT}, and so $|\gamma(Q)|\geq 7$. Similarly, if $v\notin \ell''$ then 
$|\gamma(Q\setminus \{v\})|\geq 6$ by Lemma~\ref{l:TXT} and Proposition~\ref{p:closed}~($i$), and hence  $|\gamma(Q)|\geq 7$. 
\halnospace

\vskip 0.1cm
\noindent (2.1) Suppose that $|\gamma(\Delta_0)|=1$. Then $\ell, \ell', \ell''$ receive the same color, and so
 $\gamma$ and $T_1\cup \{a, b\}\cup T_2$ satisfy the hypotheses of Lemma~\ref{l:/TTb}. Therefore, $|\gamma(Q)|\geq 7$.  
 \vskip 0.1cm
\noindent (2.2) Suppose that $|\gamma(\Delta_0)|=2$. Then, two sides of $\Delta_0$ receive the same color, say $c_2$. Suppose that 
$\gamma(\Delta_0)=\{c_1, c_2\}$. By Corollary~\ref{c:neq6}($i$), $|\gamma(T_1\cup T_2)|\geq 4$. Since if $|\gamma(T_1\cup T_2)|= 5$ there is nothing to prove,
we can assume that $\gamma(T_1\cup T_2)=\{c_3, c_4, c_5, c_6\}$.  
\vskip 0.1cm
\noindent (2.2.1) Suppose that $\gamma(\ell)=c_1$. Since $\ell'$ and $\ell''$ are clean in $\qq$ and $\gamma(\ell')=c_2=\gamma(\ell'')$, then $c_2$ must be a star of 
$\gamma(Q)$ with apex $v=\ell'\cap \ell''$ and we are done by Claim~\ref{cl:star}.
\vskip 0.1cm
\noindent (2.2.2) Suppose that $\gamma(\ell)=c_2$.  

\begin{claim}\label{cl:l3=c2} We can assume that $\gamma(\ell')=c_1$ and $\gamma(\ell)=\gamma(\ell'')=c_2$. 
\end{claim}
\noindent{\em Proof of Claim}~\ref{cl:l3=c2}. Suppose that $\gamma(\ell)\neq \gamma(\ell'')$. From $\gamma(\ell)=c_2$ it follows that $\gamma(\ell'')= c_1$, and so 
$\gamma(\ell')=c_2$.
 We recall that $\ell''$ has its endpoints in either $A$ or $B$, and that it is clean in $\qq$. It is easy to see that each segment of colored with $c_1$ intersects to $\ell$ and so we can recolor $\ell$ with $c_1$ without affecting the essential properties of $\gamma(Q)$ (with the roles of $c_1$ and $c_2$ 
 in $\Delta_0$ interchanged).  \halnospace


\begin{claim}\label{cl:holds6} If $\gamma(Q)$ has three distinct stars with apices in $T_1\cup T_2$, then Lemma~\ref{l:TDT} holds. 
\end{claim}
\noindent{\em Proof of Claim}~\ref{cl:holds6}. Let $u_1, u_2, u_3\in T_1\cup T_2$ be the apices of such stars of $\gamma(Q)$. Then 
$|\gamma(Q\setminus \{u_1, u_2, u_3\})|\geq 4$ by Lemma~\ref{thm:luis}, and hence $|\gamma(Q)|\geq 7$.  
\halnospace

\vskip 0.1cm

Let $\ell'$ be as above. Let us define $U_{\ell'}\subset T_1\cup T_2$ as in Definition~\ref{d:U}. From the choice of $\ell'$ it is not hard to see that each point of $U_{\ell'}$ is the 
apex of a proper star of $\gamma(Q)$. By Claim~\ref{cl:holds6}, we may asssume that $|U_{\ell'}|\leq 2$. 
Let $\{v'_0, \ldots, v'_s\}, \Delta_{\ell'}, \ell'_1, \ell'_2$ and $\Delta'_1, \Delta'_2$ be the objects corresponding to $\ell'$ described in Definition~\ref{d:U}. Since $|U_{\ell'}|\leq 2$ all these are well-defined. Let $Q'=Q\setminus U_{\ell'}$. By Proposition~\ref{p:closed}~($iii$), it is enough to show $|\gamma(Q')|\geq 7- |U_{\ell'}|$. 
We note that $\Delta_{\ell'}$ is a separable subset of $Q'$ with respect to $\gamma$. Then 
 $|\gamma(Q')|\geq |\gamma(Q'\setminus \Delta_{\ell'})|+|\gamma(\Delta_{\ell'})|$ by Proposition~\ref{p:closed}~($ii$).

\begin{claim}\label{cl:xDelta'} $\gamma(\Delta_{\ell'})=\{c_1\}$ or Lemma~\ref{l:TDT} holds. 
\end{claim}
\noindent{\em Proof of Claim}~\ref{cl:xDelta'}. By Claim~\ref{cl:l3=c2} $\gamma(\ell')=c_1$, and so $c_1\in \gamma(\Delta_{\ell'})$. 
Since $Q'\setminus \Delta_{\ell'} \subset T_1\cup I\cup T_2$ for some $I\in \{A,B\}$,  Lemma~\ref{l:TXT} implies 
$|\gamma(Q'\setminus \Delta_{\ell'})|\geq |Q'\setminus \Delta_{\ell'}|-2$, and so $|\gamma(Q')|\geq |Q'\setminus \Delta_{\ell'}|-2+|\gamma(\Delta_{\ell'})|$. We note that 
$|Q'\setminus \Delta_{\ell'}|=9-|U_{\ell'}|-|\Delta_{\ell'}|$. 

If $|\gamma(\Delta_{\ell'})|=|\Delta_{\ell'}|=3$, then $|\gamma(Q')|\geq (9-|U_{\ell'}|-|\Delta_{\ell'}|)-2+|\Delta_{\ell'}|=7-|U_{\ell'}|$, as required. Then we can assume that
$|\gamma(\Delta_{\ell'})|\in \{1,2\}$. Since $|\gamma(\Delta_{\ell'})|=1$
implies $\gamma(\Delta_{\ell'})=\{c_1\}$, as claimed, we must have $|\gamma(\Delta_{\ell'})|=2$. 
From $v'_0\notin U_{\ell'}$ we know that $\gamma(\ell'_1)\neq\gamma(\ell'_2)$. Since $|\gamma(\Delta_{\ell'})|=2$,
we must have $c_1\in \{\gamma(\ell'_1), \gamma(\ell'_2)\}$. In any case, $c_1$ is a star with apex in $\Delta_0$. This last 
and Claim~\ref{cl:star} imply $|\gamma(Q)|\geq 7$.       
\halnospace

 \begin{claim}\label{cl:ch3} Let $f$ and $g$ be the segments that join $v'_s$ with the endpoints of $\ell''$. Then $\gamma(f)= c_2=\gamma(g)$ or Lemma~\ref{l:TDT} holds. 
\end{claim}
\noindent{\em Proof of Claim}~\ref{cl:ch3}. Let $[]$ be the convex quadrilateral formed by the 3 points of $\Delta_0$ 
together with $v'_s$. Then either $f$ or $g$ is a side of $[]$ and the other one is a diagonal of $[]$. Suppose that $f$ (resp. $g$) is a side (resp. diagonal) 
of $[]$. We start by showing that $|\gamma([])|\in \{2,4\}$ implies $|\gamma(Q')|\geq 7- |U_{\ell'}|$, as required.

Suppose that $|\gamma([])|=2$. Then $\gamma([])=\{c_1,c_2\}$ and $\gamma(f)=c_2$. Then $\gamma(g)\neq c_2$ implies that $c_2$
is a star of $\gamma(Q)$ with apex in a corner $v$ of $\Delta_0$. From this  and Claim~\ref{cl:star} we can have that $|\gamma(Q)|\geq 7$ and so 
$|\gamma(Q')|\geq 7- |U_{\ell'}|$. 

If $|\gamma([])|=4$ then $[]$ is a separable subset of $Q'$, and  
Lemma~\ref{l:TXT} implies $|\gamma(Q'\setminus [])|\geq 9-|U_{\ell'}|-|[]|-2$, and hence 
$|\gamma(Q')|\geq |\gamma(Q'\setminus [])|+|\gamma([])|=(9-|U_{\ell'}|-|[]|-2)+|[]|=7-|U_{\ell'}|$. Thus we can assume that $\gamma([])=\{c_1, c_2, c_3\}$ for some $c_3\in \gamma(Q')\setminus \{c_1,c_2\}$. From the difinition of $f$ it is not hard to see that 
$\ell', \ell'',f$ are consecutive in $[]$ and appear in this cyclic order. Then $\ell'\cap f=\emptyset$ and $\gamma(\ell')=c_1$ imply 
$\gamma(f)\neq c_1$. 

Let $h$ be the side of $[]$ that joins $v'_s$ with $v=\ell\cap \ell'$. From Claim~\ref{cl:xDelta'} and $v'_0\neq v'_s$ it follows that $\gamma(h)\neq c_1$. Similarly, 
 $h\cap \ell''=\emptyset$ and $\gamma(\ell'')=c_2$ imply $\gamma(h)\neq c_2$. These and $\gamma([])=\{c_1, c_2, c_3\}$ imply $\gamma(h)= c_3$. 
 From $v'_s\notin U_{\ell'}$ we have $\gamma(f)\neq c_3$ and so $\gamma(f)=c_2$. We note that if $\gamma(g)\neq c_2$, then $c_2$ 
 must be a star of $\gamma(Q)$ with apex in $\Delta_0$, and so we are done by Claim~\ref{cl:star}. 
\halnospace

We are ready to complete the proof of $(2.2.2)$ and so the proof of Lemma~\ref{l:TDT}. Let $a'\in A$ and $b'\in B$ be such that $\ell'=a'b'$. 
We recall that $\Delta'_l=\Delta(\ell'_l,v'_1)$ for $l\in \{1,2\}$. From $v'_1\notin U_{\ell'}$ we know that $\gamma(v'_1a')\neq \gamma(v'_1b')$ and so 
$|\gamma(\Delta'_l)|=3=|\Delta'_l|$ for some $l\in \{1,2\}$.
 
Let $f, g$ and $[]$ be as in the proof of Claim~\ref{cl:ch3}. Then $f$ is the side of $[]$ that joins $v'_s$ with $\ell''$ and $\gamma(f)=c_2=\gamma(g)$. 
Since $f$ is disjoint from any segment in $\Delta'_l$, then $c_2\notin \gamma(\Delta'_l)$. From this and the fact that $\Delta'_l$ is a separable subset of $Q'$
we know that $|\gamma(Q')|\geq |\gamma(Q'\setminus \Delta'_l)|+|\gamma(\Delta'_l)|$.

 Let $v=\ell\cap \ell'$. If $v\in \Delta'_l$, then $Q'\setminus \Delta'_l \subset T_1\cup I\cup T_2$  for some $I\in \{A,B\}$, then Lemma~\ref{l:TXT}  implies 
 $|\gamma(Q'\setminus \Delta'_l)|\geq |Q'\setminus \Delta'_l|-2$. If $v\notin \Delta'_l$, then 
$Q'\setminus \Delta'_l \subset T_1\cup \{a,b\} \cup T_2$ with 
$\ell=ab$, then Lemma~\ref{l:/TTb} and Proposition~\ref{p:closed}~($ii$) imply $|\gamma(Q'\setminus \Delta'_l)|\geq |Q'\setminus \Delta'_l|-2$.
In any case, we have $|\gamma(Q')|\geq |Q'\setminus \Delta'_l|-2+|\gamma(\Delta'_l)|$. Since  
 $|Q'\setminus \Delta'_l|=9-|U_{\ell'}|-|\Delta'_l|$ we have $|\gamma(Q')|\geq (9-|U_{\ell'}|-|\Delta'_l|) - 2+|\gamma(\Delta'_l)|=7-|U_{\ell'}|$, as required.
\end{proof}


\begin{lemma}\label{l:1T3} Let $\Delta_U$ be a triangle with corners in $U\in \{A,B\}$, $y\in (A\cup B)\setminus U$ and $Q\subseteq T_1 \cup \Delta_U \cup \{y\} \cup T_2$.
If $\gamma$ is an optimal coloring of $D(Q)$, then $|\gamma(Q)|\geq |Q|-2$.   
\end{lemma}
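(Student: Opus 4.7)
The plan is to apply Proposition~\ref{p:closed}($i$) to reduce to the maximal case $Q=T_1\cup\Delta_U\cup\{y\}\cup T_2$, so $|Q|=10$ and the task becomes to show $|\gamma(Q)|\ge 8$. By the order-type symmetry $A\cup T_2\sim B\cup T_1$ recorded in Section~\ref{s:X}, I may assume without loss of generality that $U=A$, $\Delta_U=\{a_i,a_j,a_k\}$ and $y=b\in B$. Since $T_1\cup T_2$ is a separable subset of $X$ (and therefore of $Q$), the argument behind Proposition~\ref{p:closed}($ii$) gives
\[
|\gamma(Q)|\;\ge\;|\gamma(\Delta_U\cup\{b\})|+|\gamma(T_1\cup T_2)|\;\ge\;|\gamma(\Delta_U\cup\{b\})|+4,
\]
where the final bound uses Theorem~\ref{thm:mario} together with $T_1\cup T_2\sim C_{3,3}$. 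Thus the conclusion is immediate whenever $|\gamma(\Delta_U\cup\{b\})|\ge 4$, so the real work lies in the cases $|\gamma(\Delta_U\cup\{b\})|\in\{2,3\}$.

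The principal engine I would invoke is Lemma~\ref{l:TDT}: for every $z\in\Delta_U\cup\{b\}$, the set $Q\setminus\{z\}$ has the form $T_1\cup\Delta'\cup T_2$ for a triangle $\Delta'$ with vertices in $A\cup B$, so $|\gamma(Q\setminus\{z\})|\ge 7$. Combined with Proposition~\ref{p:closed}($iii$), if some $z\in\Delta_U\cup\{b\}$ is an apex of some star of $\gamma$, we immediately get $|\gamma(Q)|\ge|\gamma(Q\setminus\{z\})|+1\ge 8$. This forces the argument into the \emph{no-apex case}, where no point of $\Delta_U\cup\{b\}$ is an apex of any star of $\gamma$. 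A short check then shows that each of the six segments spanned by $\Delta_U\cup\{b\}$ must belong to a thrackle class of $\gamma$: it cannot form a $2$-star (both its endpoints would be apices), it cannot lie in a star with apex in $\Delta_U\cup\{b\}$ by hypothesis, and it cannot lie in a star with apex in $T_1\cup T_2$ since it has no endpoint there.

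The main obstacle I expect is this no-apex case. My plan is to subdivide it further by $|\gamma(\Delta_U\cup\{b\})|\in\{2,3\}$ and by the pattern of thrackle partners of the six segments of $\Delta_U\cup\{b\}$, exploiting two structural facts: first, any two of these six segments that share a vertex (the three segments within $\Delta_U$ meet pairwise, and the three segments $a_lb$ share $b$) cannot be each other's thrackle partners, so their partners must come from the cross-family $(T_1\cup T_2)*(\Delta_U\cup\{b\})$ or from a non-incident segment inside $\Delta_U\cup\{b\}$; second, since $T_1\cup T_2$ is separable in $X$, no segment with both endpoints in $T_1\cup T_2$ can be a thrackle partner of a segment of $\Delta_U\cup\{b\}$. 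In each subcase I would then apply the previously established Lemmas~\ref{l:/TT}, \ref{l:/TTb}, \ref{l:3vs4}, and \ref{l:4vs5} to the natural sub-configurations $T_1\cup\{a_l,b\}\cup T_2$ and $T_1\cup\{a_l,b,t\}\cup T_2$, aiming either to exhibit a segment of $\qq$ whose colour falls outside the seven colours already accounted for, or to contradict the forced thrackle pattern using the specific coordinates of $X$.
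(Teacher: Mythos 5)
Your opening moves coincide with the paper's: reduce to $|Q|=10$ via Proposition~\ref{p:closed}($i$), and dispose of any configuration in which a point of $\Delta_U\cup\{y\}$ is an apex by applying Lemma~\ref{l:TDT} and Proposition~\ref{p:closed}($iii$) to $Q\setminus\{z\}$. But from that point on the proposal has two genuine problems. First, your ``short check'' that in the no-apex case all six segments spanned by $\Delta_U\cup\{y\}$ lie in thrackle classes is incorrect: the three sides of $\Delta_U$ are clean in $\qq$, so if they all receive one colour, that chromatic class is exactly the triangle $\Delta_U$ --- a class with no crossing pair (hence a star in the paper's terminology) but with \emph{no apex at all}, which is perfectly compatible with your no-apex hypothesis. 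This is not a pathological corner: $|\gamma(\Delta_U)|=1$ is precisely the configuration the paper identifies as the hard case (it rules out $|\gamma(\Delta_U)|=2$ because that \emph{does} force an apex, and $|\gamma(\Delta_U)|=3$ by a separability count with Lemma~\ref{l:TXT}), and it is where essentially all of the work lies.

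Second, for that hard case you offer only a programme, not a proof: ``I would subdivide \ldots aiming either to exhibit a segment \ldots or to contradict the forced thrackle pattern.'' Nothing is executed. The paper's argument here is substantial: it splits on whether $T_1\cup T_2$ lies inside or outside $\Delta(u_i,u_k,y)$ (the inside case falling to a separability count, since $\gamma(u_iy)\neq\gamma(u_ky)$ is forced); in the outside case it forces $\gamma(u_jy)=\gamma(u_ky)=\gamma(u_iy)=c_1$, produces a segment $g\in u_i*(T_1\cup T_2)$ of colour $c_1$, dispatches the subcase $\gamma(\Delta(\ell,g))\neq\{c_1\}$ via Lemma~\ref{l:/TTb} applied to $T_1\cup\{u_j,y\}\cup T_2$, and then runs the $U_{\ell}$ machinery of Definition~\ref{d:U} (bounding $|U_{\ell}|\leq 2$ as in Claim~\ref{cl:holds6} and distinguishing $\Delta'=\Delta_{\ell}$ from $\Delta'\neq\Delta_{\ell}$) to extract the eighth colour. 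None of these steps, nor any workable substitute for them, appears in your proposal, so the lemma is not established.
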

\begin{proof} By Proposition~\ref{p:closed}~($i$), it is enough to verify the case in which $Q=T_1 \cup \Delta_U \cup \{y\} \cup T_2$. 
Then, we need to show that $\gamma(Q)\geq 8$. We can assume that $\gamma$ does not have a star with apex in a point $v\in \Delta_U  \cup \{y\}$. Indeed, if such $v$ exists the required inequality follows by applying Lemma~\ref{l:TDT} and Proposition~\ref{p:closed}~($iii$) to $Q\setminus \{v\}$. Let $u_{i}, u_{j}, u_{k}$ be the corners of $\Delta_U$ and suppose that $i<j<k$. 

Let $\Delta:=\Delta(u_{i}, u_{k}, y)$. Then $u_{j}$ lies in the interior of $\Delta$. Note that $\Delta_U$ is a separable subset of $Q$ and that its segments are clean in $\qq$. 
Since $\gamma(\Delta_U)=2$ implies that $\gamma$ has a star with apex
in a point of $\Delta_U$, we must have that $|\gamma(\Delta_U)|\in \{1,3\}$. If $|\gamma(\Delta_U)|=3$ then $|\gamma(Q)|\geq |\gamma(Q\setminus \Delta_U)|+3$. Since  
$|\gamma(Q\setminus \Delta_U)|\geq 5$ by Lemma~\ref{l:TXT}, we can assume  $|\gamma(\Delta_U)|=1$.

\vskip 0.1cm
{\sc Case 1}. Suppose that $T_1\cup T_2$ lies in the interior of $\Delta$. Then $\Delta=\overline{Q}$. Since $\gamma(u_{i}y)=\gamma(u_{k}y)$ implies that color $\gamma(u_{i}y)$ induces a star
  with apex in $y$,  we can assume $\gamma(u_{i}y)\neq \gamma(u_{k}y)$. Then $|\gamma(\Delta)|=3$ and so $|\gamma(Q)|\geq |\gamma(Q\setminus \Delta)|+3$. 
Since $|\gamma(Q\setminus \Delta)|\geq 5$ by Lemma~\ref{l:TXT}, we are done.  

\vskip 0.1cm
{\sc Case 2}. Suppose that $T_1\cup T_2$ is in the exterior of $\Delta$. Then we must have that $i=1, j=2$ and $k=3$. Let $\ell=u_{i}y$ and $c_1=\gamma(\ell)$. 
We note that if a segment $f\in \{u_i, y\}* (T_1\cup T_2)$ intersects $\ell$, then $f$ intersects each of $u_jy$ and $u_ky$. Then, w.l.o.g. 
we can assume that $\gamma(u_jy)=c_1=\gamma(u_ky)$. Since $c_1$ cannot be a star, there is a segment $g\in u_i*(T_1\cup T_2)$ such that
$\gamma(g)=c_1$. Let $\Delta':=\Delta(\ell, g)$. If $\gamma(\Delta')\neq \{c_1\}$, then $T_1\cup \{u_j, y\}\cup T_2$ satisfy the hypotheses of Lemma~\ref{l:/TTb} and so
$|\gamma(Q\setminus \{u_i, u_k\})|\geq 7$. Since $\gamma(u_iu_k)$ is not one of these 7 colors, we are done. Thus, we can assume $\gamma(\Delta')=\{c_1\}$.

\vskip 0.1cm
For $\ell=u_{i}y$, let $U_{\ell}$ be as in Definition~\ref{d:U}. From the assertions in previous paragraph it is not hard to see that each point of $U_{\ell}$ is the apex of a 
proper star of $\gamma(Q)$.  By using the argumet in the proof of Claim~\ref{cl:holds6}, we can asssume that $|U_{\ell}|\leq 2$.  
Let $\{v_0, \ldots, v_s\}, \Delta_{\ell}, \ell_1, \ell_2$ and $\Delta_1, \Delta_2$ be as in Definition~\ref{d:U}. Since $|U_{\ell}|\leq 2$, all these are well-defined.
Let $Q'=Q\setminus U_{\ell}$. By Proposition~\ref{p:closed}~($iii$), it is enough to show that $|\gamma(Q')|\geq 8- |U_{\ell}|$. 

From the definition of $U_{\ell}$ we know that $\Delta'\subset Q'$. Suppose first that $\Delta'=\Delta_{\ell}$, and let  
$[]$ be the convex quadrilateral formed by the 3 points of $\Delta$ together with $v_s$. Then $[]=\overline{Q'}$.
From $v_s\notin U_{\ell}$ and $\gamma(\Delta_{\ell})=\{c_1\}$ it follows that $|\gamma([])|=4$. Then $|\gamma(Q')|\geq |\gamma(Q'\setminus [])|+4$. On the other hand, 
Lemma~\ref{l:TXT} implies $|\gamma(Q'\setminus [])|\geq (10-|U_{\ell}|)-4-2=4-|U_{\ell}|$, and hence $|\gamma(Q')|\geq 8 - |U_{\ell}|$.   
Suppose finally that $\Delta'\neq\Delta_{\ell}$. From $v_0\notin U_{\ell}$ it follows that $\gamma(\ell_1)\neq \gamma(\ell_2)$, and so 
$|\gamma(\Delta_{\ell})|=3$. Since $\Delta_{\ell}$ is a separable subset of $Q'$, then $|\gamma(Q')|\geq |\gamma(Q'\setminus \Delta_{\ell})|+3$. 
By Lemma~\ref{l:TXT} we know that $|\gamma(Q'\setminus \Delta_{\ell})|\geq |Q'\setminus \Delta_{\ell}|-2=5-|U_{\ell}|$, as required.  
\end{proof}



\begin{proposition}\label{p:starvgood2} Let $[]$ be a convex quadrilateral with corners in $A\cup B$, let $\ell$ be a side of $[]$ such that $\ell\in A*B$, and let
 $Q\subseteq T_1\cup  [] \cup T_2$ with $|Q|\geq 3$. If $\gamma$ is an optimal coloring of $D(Q)$ and $[]$ contains $T_1\cup T_2$ in its interior, then $\ell$ belongs to a star of 
 $\gamma(Q)$ or $|\gamma(Q)|\geq |Q|-2$.      
\end{proposition}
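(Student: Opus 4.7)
The plan is to mirror the proof of Proposition~\ref{p:starvgood} with a quadrilateral replacing the triangle. By Proposition~\ref{p:closed}($i$) it suffices to prove the statement for $Q=T_1\cup[]\cup T_2$, so $|Q|=10$ and the goal is that either $\ell$ lies in a star of $\gamma(Q)$ or $|\gamma(Q)|\ge 8$. Write $[]=p_1p_2p_3p_4$ cyclically with $\ell=p_1p_2$. Since $[]$ is convex and $T_1\cup T_2$ lies in its interior, no segment of $\mathcal{Q}$ crosses $\ell$; hence $\ell$ is clean in $\mathcal{Q}$ and every segment in the chromatic class of $\ell$ is incident to $\{p_1,p_2\}$. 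Because $T_1\cup T_2$ is separable in $Q$, Corollary~\ref{c:neq6}($i$) gives $|\gamma(Q)|\ge|\gamma(T_1\cup T_2)|+|\gamma([])|\ge 4+|\gamma([])|$, so I may assume $|\gamma([])|\le 3$.

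First I would split by the distribution of the four corners of $[]$ between $A$ and $B$. In the asymmetric case, where three corners lie in a common set $U\in\{A,B\}$, we write $[]=\Delta_U\cup\{y\}$ with $\Delta_U$ a triangle whose corners lie in $U$ and $y\in(A\cup B)\setminus U$; Lemma~\ref{l:1T3} applied directly to $Q$ then gives $|\gamma(Q)|\ge|Q|-2=8$, and we are done without appealing to the star alternative. So I may assume the balanced case, with exactly two corners of $[]$ in each of $A$ and $B$.

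In the balanced case I would first attempt an apex reduction. If some $w\in\{p_3,p_4\}$ is the apex of a star of $\gamma$, then Proposition~\ref{p:closed}($iii$) combined with Lemma~\ref{l:TDT} applied to $Q\setminus\{w\}=T_1\cup\Delta'\cup T_2$ (a triangle $\Delta'$ with vertices in $A\cup B$) yields $|\gamma(Q)|=|\gamma(Q\setminus\{w\})|+1\ge 7+1=8$. So assume neither $p_3$ nor $p_4$ is the apex of any star. Now if the chromatic class of $\ell$ is itself a star we are done; otherwise it is a thrackle, and since $\ell$ is clean, this class must contain a pair of segments $p_1x, p_2y$ (distinct from $\ell$) that cross each other. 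From here I would apply Lemma~\ref{l:/TT} or Lemma~\ref{l:/TTb} to the subset $T_1\cup\{p_1,p_2\}\cup T_2$, which supplies at least six colors on that block, and then exploit the crossing pair $p_1x, p_2y$ together with the absence of apex stars at $p_3, p_4$ to furnish the additional colors needed to reach $|\gamma(Q)|\ge 8$.

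The hard part will be the final step: an exhaustive subcase analysis on $|\gamma([])|\in\{2,3\}$, on the color pattern of the four sides and two diagonals of $[]$, and on whether $x, y$ lie in $T_1\cup T_2$ or in $\{p_3,p_4\}$. The clean-ness of $\ell$ and the separability of $T_1\cup T_2$ handle most subcases, but the thrackle structure of the class of $\ell$ must be leveraged carefully to extract a color outside the core block $T_1\cup\{p_1,p_2\}\cup T_2$. I expect this bookkeeping to be technically delicate, in the spirit of the long case analysis carried out in the proof of Lemma~\ref{l:TDT}.
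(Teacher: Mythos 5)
Your reductions are sound as far as they go: passing to $Q=T_1\cup[]\cup T_2$, disposing of the three-corners-in-one-set configuration via Lemma~\ref{l:1T3}, and removing an apex at $p_3$ or $p_4$ via Lemma~\ref{l:TDT} and Proposition~\ref{p:closed}($iii$) are all valid steps (the first of these even covers a configuration that the paper's own proof treats only implicitly, since its argument wants the side of $[]$ opposite $\ell$ to lie in $A*B$ so that Lemma~\ref{l:/TT} applies). But the proposal stops exactly where the proposition starts to require an argument. In the remaining case --- $|\gamma([])|\in\{2,3\}$, no apex at $p_3,p_4$, and the class of $\ell$ a thrackle --- you say you would apply Lemma~\ref{l:/TT} to $T_1\cup\{p_1,p_2\}\cup T_2$ to get six colors and then ``exploit the crossing pair'' to furnish the rest, deferring everything to an unspecified exhaustive subcase analysis. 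That deferred analysis is the entire content of the statement: nothing in the proposal shows that two further colors exist, and it is not automatic that they do. Only $\gamma(p_3p_4)$ comes for free as a seventh color (it is disjoint from every segment of the block $T_1\cup\{p_1,p_2\}\cup T_2$); an eighth color must be extracted from segments incident with $p_3$ or $p_4$, which can a priori reuse the colors already counted. As written, this is a plan, not a proof.

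The paper closes this gap by a different and much shorter route that you do not invoke: it takes the set $U_\ell$ of Definition~\ref{d:U} (points of $T_1\cup T_2$ joined to the two endpoints of $\ell$ by a monochromatic pair of segments), observes that each such point is a star apex so that one may assume $|U_\ell|\le 2$ by the argument of Claim~\ref{cl:holds6}, and then, after deleting $U_\ell$, peels off the separable triangle $\Delta_\ell=\Delta(\ell,v_0)$ --- or, when $\Delta_\ell$ is monochromatic, one of the separable triangles $\Delta_1,\Delta_2$ over $v_1$ --- reducing the count to Lemma~\ref{l:/TT} applied to $T_1\cup\ell'\cup T_2$ plus $|\gamma(\Delta_\ell)|$ or $|\gamma(\Delta_l)|$. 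In that scheme the ``$\ell$ belongs to a star'' alternative falls out of the single residual subcase $|\gamma(\Delta_\ell)|=2$, where $\gamma(\ell)$ is forced to be a star with apex at an endpoint of $\ell$, rather than being split off at the outset as you do. To salvage your own route you would have to actually carry out the final case analysis; adopting the $U_\ell/\Delta_\ell$ decomposition is the efficient way to finish.
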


\begin{proof} By Proposition~\ref{p:closed}~($i$), it is enough to verify the case in which $Q:=T_1 \cup [] \cup T_2$. We note that $\ell$ is clean in $\qq$ and that
$|\gamma([])|\geq 2$. We need to show that $|\gamma(Q)|\geq 8$. Since $|\gamma(Q)|\geq |\gamma(T_1\cup T_2)|+|\gamma([])|\geq 4+|\gamma([])|$, 
we can assume $|\gamma([])|\in \{2,3\}$. 

Let $U_{\ell}$ be as in Definition~\ref{d:U}. From the choice of $\ell$ it is not hard to see that each point of $U_{\ell}$ is the apex of a proper star of $\gamma(Q)$.
Again, by using the argumet in the proof of Claim~\ref{cl:holds6}, we can asssume that $|U_{\ell}|\leq 2$. Then, if 
$\{v_0, \ldots, v_s\}, \Delta_{\ell}, \ell_1, \ell_2$ and $\Delta_1, \Delta_2$ are as in Definition~\ref{d:U}, they are well-defined because $|U_{\ell}|\leq 2$.
Let $Q'=Q\setminus U_{\ell}$. By Proposition~\ref{p:closed}~($iii$), it is enough to show $|\gamma(Q')|\geq 8- |U_{\ell}|$. 

Let $\ell'$ be the opposite side of $\ell$ in $[]$, and note that $c_1:=\gamma(\ell)\in \gamma(\Delta_{\ell})$. 
Since the set of points forming $\Delta_{\ell}$ is a separable subset of $Q'$, then 
 $|\gamma(Q')|\geq |\gamma(Q'\setminus \Delta_{\ell})|+|\gamma(\Delta_{\ell})|$ by Proposition~\ref{p:closed}~($ii$).

Because $Q'\setminus \Delta_{\ell} \subset T_1\cup \{a,b\} \cup T_2$ for $ab=\ell'$, then Lemma~\ref{l:/TT} and 
Proposition~\ref{p:closed}~($ii$) imply $|\gamma(Q'\setminus \Delta_{\ell})|\geq |Q'\setminus \Delta_{\ell}|-2$, and so $|\gamma(Q')|\geq |Q'\setminus \Delta_{\ell}|-2+|\gamma(\Delta_{\ell})|$. We note that  $|Q'\setminus \Delta_{\ell}|=10-|U_{\ell}|-|\Delta_{\ell}|$. 

If $|\gamma(\Delta_{\ell})|=|\Delta_{\ell}|$, then $|\gamma(Q')|\geq (10-|U_{\ell}|-|\Delta_{\ell}|)-2+|\Delta_{\ell}|=8-|U_{\ell}|$, as required. 
Then we must have $|\gamma(\Delta_{\ell})|\in \{1,2\}$. 

\vskip 0.1cm
\noindent{$\bullet$}  Suppose that $|\gamma(\Delta_{\ell})|=1$. From $v_1\notin U_{\ell}$ it follows that $|\gamma(\Delta_l)|=3$ for some $l\in \{1,2\}$.
Then $\Delta_l$ is a separable subset of $Q'$, and hence
 $|\gamma(Q')|\geq |\gamma(Q'\setminus \Delta_l)|+|\gamma(\Delta_l)|$. By  Lemma~\ref{l:/TT} and Proposition~\ref{p:closed}~($i$) we know that
 $|\gamma(Q'\setminus \Delta_l)|\geq |Q'\setminus \Delta_l|-2=(10-|U_{\ell}|)-2-|\Delta_l|$, and hence  
 $|\gamma(Q'\setminus \Delta_l)|\geq (10-|U_{\ell}|)-2-|\Delta_l|+3=8-|U_{\ell}|$, as required.

\vskip 0.1cm
\noindent{$\bullet$}  Suppose that $|\gamma(\Delta_{\ell})|=2$.  From $v_1\notin U_{\ell}$ we know that $\gamma(\ell_1)\neq\gamma(\ell_2)$, and hence 
$c_1\in \{\gamma(\ell_1), \gamma(\ell_2)\}$. In any case, we have that $c_1$ must be a star with apex in an endpoint of $\ell$, as required. 
\end{proof}


\begin{lemma}\label{l:2T2} Let $a_{i}, a_{j}\in A$, $b_{p}, b_{q}\in B$ and $Q:=T_1 \cup \{a_{i}, a_{j}, b_{p}, b_{q}\} \cup T_2$. If  $\gamma$ is an optimal coloring of $D(Q)$, 
then $|\gamma(D(Q))|\geq 8$.   
\end{lemma}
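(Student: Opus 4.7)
The plan is to mimic the structure of Lemma~\ref{l:TDT}, replacing the triangle $\Delta_0$ by the convex quadrilateral $[]:=\{a_i,a_j,b_p,b_q\}$. Note that $|Q|=10$, so we must prove $|\gamma(Q)|\geq 8$. Because $A\cup B\sim C_{5,5}$ is a double chain, the four corners of $[]$ are in convex position with $a_ia_j$ and $b_pb_q$ as opposite sides; in particular there are exactly two sides of $[]$ in $A*B$ (together with the two diagonals). Since $T_1\cup T_2$ is a separable subset of $X$, Proposition~\ref{p:closed}(ii) combined with Corollary~\ref{c:neq6}(i) gives
\[
|\gamma(Q)|\geq |\gamma(T_1\cup T_2)|+|\gamma([])|\geq 4+|\gamma([])|,
\]
so we may assume $|\gamma([])|\leq 3$.

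Next I would show that no corner $v$ of $[]$ is the apex of a star of $\gamma$. Indeed, if such a $v$ exists, then $Q\setminus\{v\}\subseteq T_1\cup \Delta\cup T_2$ for some triangle $\Delta$ with vertices in $A\cup B$, and Lemma~\ref{l:TDT} yields $|\gamma(Q\setminus\{v\})|\geq 7$; Proposition~\ref{p:closed}(iii) then gives $|\gamma(Q)|\geq 8$. Under this assumption I split into two geometric cases.

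The first (easy) case is when $T_1\cup T_2\subset \operatorname{int}([])$. Here one of the two sides $\ell\in A*B$ of $[]$ is clean in $\qq$, and Proposition~\ref{p:starvgood2} applies: either $|\gamma(Q)|\geq 8$ as required, or $\ell$ belongs to a star of $\gamma(Q)$. In the latter case the apex of that star must be an endpoint of $\ell$ (a corner of $[]$), contradicting the previous paragraph. The second case is when $T_1\cup T_2\not\subset \operatorname{int}([])$. Here I follow the Case~2 analysis of Lemma~\ref{l:TDT}: take the side $\ell\in A*B$ of $[]$ closest to $T_1\cup T_2$, define $U_\ell,\Delta_\ell,\ell_1,\ell_2,\Delta_1,\Delta_2$ as in Definition~\ref{d:U}, and run the same separable-subset reductions as in the proof of Lemma~\ref{l:TDT}. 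The key tools are: (a) the monochromatic-triangle obstruction of Lemma~\ref{l:/TTb} applied to $T_1\cup\{a,b\}\cup T_2$ when $|\gamma([])|=1$; (b) Proposition~\ref{p:starvgood} / Proposition~\ref{p:starvgood2} combined with Claim-\ref{cl:star}-style reductions when $|\gamma([])|=2$; (c) an $|\gamma([])|=3$ subcase that is immediate from separability. In each subcase one either finds enough colors among the segments in $\{a_i,a_j,b_p,b_q\}*(T_1\cup T_2)$ directly, or obtains a star whose apex is a corner of $[]$, again contradicting the reduction above.

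The main obstacle will be Case~2: when $T_1\cup T_2$ straddles $[]$ (rather than sits inside), the quadrilateral carries \emph{six} segments instead of the three of a triangle, which leaves $\gamma|_{[]}$ more freedom and forces a longer case split indexed by which pair of sides of $[]$ share a color and by the relative position of the color-classes. The bookkeeping analogous to Claims~\ref{cl:xDelta'} and \ref{cl:ch3} has to be repeated for each of the two sides $\ell\in A*B$ and for the diagonals, and one must verify that in every surviving configuration a monochromatic triangle or a clean star with apex in $[]$ appears, so that the inductive hypotheses (Lemmas \ref{l:TXT}, \ref{l:/TT}, \ref{l:/TTb}, \ref{l:TDT} and Proposition~\ref{p:closed}(iii)) produce the final eighth color.
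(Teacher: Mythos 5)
Your reductions at the start are exactly the paper's: excluding an apex at a corner of $[]$ via Lemma~\ref{l:TDT} and Proposition~\ref{p:closed}~($iii$), disposing of the case $T_1\cup T_2\subset \operatorname{int}([])$ via Proposition~\ref{p:starvgood2}, and bounding $|\gamma([])|\leq 3$ by separability of $T_1\cup T_2$. But the remainder is where essentially all of the work lies, and your sketch both misjudges one case and leaves the other unproved. First, the subcase $|\gamma([])|=3$ is \emph{not} ``immediate from separability'': separability only yields $|\gamma(Q)|\geq 4+3=7$, one short of the target. The paper's Case~2 for $\gamma([])=3$ is a substantial argument involving recoloring of $\ell_3$ and the diagonals, locating the monochromatic triangle carrying $c_1$, and a further split on whether its apex in $T_1\cup T_2$ is the nearest point $v_0$ to $\ell_1$. (Also, your subcase (a) with $|\gamma([])|=1$ is vacuous, since $a_ia_j$ and $b_pb_q$ are disjoint and must receive distinct colors.)

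Second, and more seriously, the core case $|\gamma([])|=2$ with $T_1\cup T_2$ exterior cannot be dispatched by ``Claim-\ref{cl:star}-style reductions'' or by rerunning the $U_\ell$/$\Delta_\ell$ machinery of Lemma~\ref{l:TDT}. The paper's Case~1 is a four-way split on $(|\gamma(T_1)|,|\gamma(T_2)|)$ that relies on tools absent from your outline: the recoloring convention that every segment incident with a designated corner may be assumed to carry $c_1$; Claims~\ref{cl:thereisapex} and~\ref{cl:thereisapexinT1}, which manufacture a star with apex in $T_2$ (resp.\ $T_1$) so that Proposition~\ref{p:closed}~($iii$) can peel that point off; Claims~\ref{cl:3vs4} and~\ref{cl:T1<2star}, which handle the removal of \emph{two} points of a triangle; and the auxiliary Lemmas~\ref{l:3vs4} and~\ref{l:4vs5}, which were proved precisely to close these residual configurations and which your argument never invokes. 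Without an explicit substitute for these steps, the assertion that ``in every surviving configuration a monochromatic triangle or a clean star with apex in $[]$ appears'' is a conjecture, not a proof: in several of the paper's subcases no such star exists and the eighth color is instead extracted by direct color-chasing through $\{a_i,a_j,b_p,b_q\}*(T_1\cup T_2)$. The proposal is therefore an accurate plan of attack for the easy reductions but has a genuine gap where the lemma's actual content lives.
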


\begin{proof}   Let $[]$ be the convex quadrilateral
defined by $a_{i}, a_{j}, b_{p}, b_{q}$. W.l.o.g suppose that $i<j$ and $p<q$. We may assume that $\gamma$ does not have a star with apex in a corner
$v$ of $[]$. Indeed, if such $v$ exists then we can deduce the required inequality by applying Lemma~\ref{l:TDT} and Proposition~\ref{p:closed}~($iii$) to 
$Q\setminus \{v\}$. Similarly, we can assume that $T_1\cup T_2$ lies in the exterior of $[]$, as otherwise $|\gamma(Q)|\geq 8$ by Proposition~\ref{p:starvgood2}. 

From $\gamma(a_{i}a_{j})\neq \gamma(b_{p}b_{q})$, we know that $\gamma([])\geq 2$.
Since $\gamma(Q)\geq \gamma(T_1\cup T_2)+\gamma([])$ and $\gamma(T_1\cup T_2)\geq 4$, then either $\gamma([])\in \{2,3\}$ or we are done.
Let $\ell_1, \ell_2, \ell_3, \ell_4$ be the sides of $[]$ and suppose that they appear in this cyclic (clockwise) order. W.l.o.g. let $\ell_1$ (resp. $\ell_3$) be the farthest (resp. closest) segment of $\{a_{i}b_{p}, a_{j}b_{q}\}$ to $(0,0)$. For $i\in \{1,3\}$, let $c_i=\gamma(\ell_i)$. Clearly, $c_1\neq c_3$.

\vskip 0.1cm
{\sc Case 1}. Suppose that  $\gamma([])=2$. By rotating $Q$ an angle $\pi$ around the origin and relabeling the 10 points of $Q$, if necessary, we may assume w.l.o.g that 
$T_1\cup T_2$ lies on the right semiplane of the line spanned by $\ell_3$. Then, either $\gamma(\ell_2)=c_1, \gamma(\ell_4)=c_3$ or 
$\gamma(\ell_2)=c_3, \gamma(\ell_4)=c_1$ hold. 

\vskip 0.1cm
{\sc Case 1.1}. Suppose that $|\gamma(T_2)|=1$. Let $\gamma(T_2)=\{c_2\}$. We first analyze the case $\gamma(\ell_2)=c_1$ and $\gamma(\ell_4)=c_3$.  
Since $\gamma$ has no apices in $[]$, then $\gamma(a_{j}b_{p})=c_1$ and so
any segment colored with $c_1$ and distinct from $a_{j}b_{p}$ must be incident with $a_{i}$. Moreover, we can assume that any segment incident with  
$a_{i}$ is colored with color $c_1$. Indeed, if necessary, we can slightly modify $\gamma$ by recoloring with $c_1$ all such segments. It is not hard to see that 
this change does not affect the essential properties of $\gamma(Q)$.

Clearly, $c_2\notin \{c_1, c_3\}$. Let $\Delta:=\Delta(t^1_2, t^3_2,a_j)$.
From $|\gamma(T_2)|=1$ it follows that $|\gamma(\Delta)|\in \{2,3\}$. Since $\Delta$ is a separable subset of $Q$, then 
$|\gamma(Q)|\geq |\gamma(Q\setminus \Delta)|+ |\gamma(\Delta)|$. Since Lemma~\ref{l:TDT} 
implies $|\gamma(Q\setminus \Delta)|\geq |Q\setminus \Delta|-2$, we can assume that $|\gamma(\Delta)|=2$ or we are done. Let $c_4$ be such that 
$\gamma(\Delta)=\{c_2,c_4\}$. Then the sides $a_{j}t^1_2$ and $a_{j}t^3_2$ of $\Delta$ are both colored with $c_4\notin \{c_1, c_2, c_3\}$. From
$\gamma(a_{i}t^3_2)=c_1$, and the fact that $t^1_2t^3_2$ and $a_{i}t^3_2$ are the only segments intersectig both $a_{j}t^1_2$ and $a_{j}t^3_2$, it follows that 
$c_4$ must be a star with apex $a_{j}\in []$, and we are done. 

Suppose now that $\gamma(\ell_2)=c_3$ and $\gamma(\ell_4)=c_1$. As $\gamma$ has no apices in $[]$, then 
$\gamma(a_ib_q)=c_1$. As before, we can assume that any segment incident with $b_{p}$ is colored with color $c_1$. 
Let $\Delta:=\Delta(t^2_2, t^3_2,b_q)$.  Then, $\Delta$ is a separable subset of $Q$ and so 
$|\gamma(Q)|\geq |\gamma(Q\setminus \Delta)|+ |\gamma(\Delta)|$. Since Lemma~\ref{l:TDT} 
implies $|\gamma(Q\setminus \Delta)|\geq |Q\setminus \Delta|-2$ and $|\gamma(T_2)|=1$, then $|\gamma(\Delta)|=2$ or we are done.
Then $b_{q}t^2_2$ and $b_{q}t^3_2$ are both colored with $c_4\notin \{c_1, c_2, c_3\}$. If $|\gamma(T_1)|=1$, then $t_1^2t_2^1$ and 
$t_1^3t_2^2$ provide the $6$th and $7$th color, and either $a_i t_1^1$ or $a_jt_2^3$ provides the $8$th color. 

Suppose now that $|\gamma(T_1)|=2$ with $\gamma(T_1)=\{c_5, c_6\}$. Clearly, $\{c_1, c_2, \ldots , c_6\}$ are pairwise distinct. 
Let $e_1$ be as in (N3) and suppose that $\gamma(e_1)=c_6$. If $\gamma(a_jt_2^3)\neq c_3$, then $\gamma(a_jt_2^3)$ is the $7$th color, and
 either $l(e_1)t_2^1$ or $r(e_1)t_2^2$ provides the $8$th color. Thus we may assume that $\gamma(a_jt_2^3)=c_3$. Then some of $l(e_1)t_2^1$ or $r(e_1)t_2^2$ provides the $7$th color $c_7$, and the other one must be colored with $c_6$, as otherwise we are done. If $\gamma(l(e_1)t_2^1)=c_6$ and $\gamma(r(e_1)t_2^2)=c_7$, 
 then $a_it_2^1$ provides the $8$th color. Then, we must have $\gamma(r(e_1)t_2^2)=c_6$ and $\gamma(l(e_1)t_2^1)=c_7$, and hence 
 $\gamma(a_it_2^1)\circeq c_7$, $\gamma(l(e_1)t_2^2)\circeq c_6$ and
$\gamma(r(e_1)t_2^3)\circeq c_4$. If $e_1=t_1^2t_1^3$, then either $a_it_1^1$ or $b_qt_1^2$ provides the $8$th required color. Similarly,
if $e_1\neq t_1^2t_1^3$, then $b_qt_1^1$  provides the $8$th required color. 

\vskip 0.1cm
{\sc Case 1.2}. Suppose that $|\gamma(T_1)|=1$ and $|\gamma(T_2)|=2$. Let $\gamma(T_1)=\{c_2\}$ and note that $c_2\notin \{c_1, c_3\}$. 
 
\vskip 0.1cm
\noindent {(1.2.1)}  Suppose that $\gamma(\ell_2)=c_3$ and $\gamma(\ell_4)=c_1$. Since $\gamma$ does not have a star with apex in $[]$, then
 $\gamma(a_{i}b_{q})=c_1$ and as before, we can assume that any segment incident with $b_{p}$ is colored with color $c_1$.

Let $\Delta:=\Delta(t^1_1, t^3_1, b_q)$. From $|\gamma(T_1)|=1$ it follows that $|\gamma(\Delta)|\in \{2,3\}$. 
We note that any segment of $Q$ intersecting $\Delta$ is incident with a point of 
$\Delta\cup \{b_{p}\}$, and so $|\gamma(Q)|\geq |\gamma(Q\setminus \Delta)|+ |\gamma(\Delta)|$. Since Lemma~\ref{l:TDT}
implies $|\gamma(Q\setminus \Delta)|\geq |Q\setminus \Delta|-2$, then we must have $|\gamma(\Delta)|=2$, or we are done. Let $c_4$ be such that 
$\gamma(\Delta)=\{c_2,c_4\}$. Then $b_{q}t^1_1$ and $b_{q}t^3_1$ are both colored with $c_4\notin \{c_1, c_2, c_3\}$. 
Since any segment that is incident with $b_{p}$ is colored with $c_1$, then $c_4$ must be a star with apex $b_{q}$ in a corner of $[]$, and so we are done. 

\vskip 0.1cm 
\noindent {(1.2.2)}  Suppose that $\gamma(\ell_2)=c_1$ and $\gamma(\ell_4)=c_3$. Since $\gamma$ does not have a star with apex in $[]$, then
 $\gamma(a_{j}b_{p})=c_1$ and as before, we can assume that any segment incident with $a_{i}$ is colored with color $c_1$.

Let $\Delta:=\Delta(t^1_1, t^2_1,a_{j})$. From $|\gamma(T_1)|=1$ it follows that $|\gamma(\Delta)|\in \{2,3\}$. 
We note that any segment of $Q$ intersecting $\Delta$ is incident with a point of 
$\Delta\cup \{a_{i}\}$, and so $|\gamma(Q)|\geq |\gamma(Q\setminus \Delta)|+ |\gamma(\Delta)|$. Since Lemma~\ref{l:TDT} 
implies $|\gamma(Q\setminus \Delta)|\geq |Q\setminus \Delta|-2$, then we must have $|\gamma(\Delta)|=2$, or we are done. Let $c_4$ be such that 
$\gamma(\Delta)=\{c_2,c_4\}$. Then $a_{j}t^1_1$ and $a_{j}t^2_1$ are both colored with $c_4\notin \{c_1, c_2, c_3\}$.

Suppose that $\gamma(T_2)=\{c_5, c_6\}$, and let $e_2$ be as in (N3). Suppose that $\gamma(e_2)=c_5$. Clearly, some of $t_1^2l(e_2)$ or $t^3_1r(e_2)$ provides a new color, say $c_7$, and hence  $\gamma(b_{p}t^1_1)\circeq c_3$.  

If $\gamma(t^2_1l(e_2))=c_7$, then $\gamma(t^3_1r(e_2))\circeq c_5$ and $b_{q}t^3_1$ provides the $8$th color. Thus, we may assume that $\gamma(t^3_1r(e_2))=c_7$, and so
$\gamma(t^2_1l(e_2))\circeq c_5$, $\gamma(b_{q}t^3_1)\circeq c_7$, $\gamma(b_{q}r(e_2))\circeq c_7$, $\gamma(t^2_1r(e_2))\circeq c_5$ and
$\gamma(t^1_1l(e_2))\circeq c_4$. If $t^3_2\in e_2$, then $a_{j}t^3_2$ provides the $8$th color. Similarly, if $t^3_2\notin e_2$ then 
some of  $b_{q}t^3_2$ or $a_{j}t^2_2$ provides the $8$th color. This proves Case 1.2.

 \vskip 0.1cm
{\sc Case 1.3}. Suppose that $|\gamma(T_1)|=2= |\gamma(T_2)|$. Let $\gamma(T_1)=\{c_2, c_4\}$ and $\gamma(T_2)=\{c_5, c_6\}$. For $j\in \{1,2\}$
let $e_j$ be as in (N3), and let $\gamma(e_1)=c_4,~\gamma(e_2)=c_6$.   
Clearly, $c_1, c_2, \ldots ,c_6$ are pairwise distinct. In order to show this case, we need the following four claims.

\begin{claim}~\label{cl:3vs4} 
 If $u,v\in T_2$ are such that 
 $\gamma(Q\setminus \{u,v\}) \subseteq \gamma(Q)\setminus \gamma(T_2)$, then $|\gamma(Q)|\geq 8$.
\end{claim}

\noindent{\em Proof of Claim}~\ref{cl:3vs4}.  Suppose that $T_{2}=\{u,v,w\}$. Since $|\gamma(T_2)|=2$ and 
$\gamma(Q\setminus \{u,v\}) \subseteq \gamma(Q)\setminus \gamma(T_2)$, it is enough to show that $|\gamma(Q')|\geq 6$ for $Q'=Q\setminus \{u,v\}$.
We note that Lemma~\ref{thm:luis} guarantees $|\gamma(Q'\setminus \{w\})|\geq 5$. 

Suppose first that $\gamma(\ell_2)=c_1$ and $\gamma(\ell_4)=c_3$. Since $\gamma$ does not have a star with apex in $[]$, then
 $\gamma(a_{j}b_{p})=c_1$ and as before we can assume that any segment incident with $a_{i}$ is colored with $c_1$. 
 
 Let $c_0=\gamma(b_{q}w)$. We note that if $\gamma(a_{j}w)=c_0$, then $c_0$ is star with apex $w$, and so $|\gamma(Q')|\geq |\gamma(Q'\setminus \{w\})|+1=6$ by Proposition~\ref{p:closed}~($iii$) and Lemma~\ref{thm:luis}. Thus, we may assume that 
$\gamma(a_{j}w)\neq c_0$. Since $c_0\notin \gamma(T_1)\cup \{c_1, \gamma(a_{j}w)\}$, then $c_0=c_3$ or $c_0$ is the $6$th required color.
Since $c_3$ cannot be a star of $\gamma$, then $\gamma(b_{p}w)\circeq c_3$. By applying Lemma~\ref{l:3vs4} to $Q''=Q'\setminus \{a_{i}, b_{q}\}$ 
 we have $|\gamma(Q')|\geq |\gamma(Q'')|\geq 6$, as required. 

 Suppose now that $\gamma(\ell_2)=c_3$ and $\gamma(\ell_4)=c_1$. Since $\gamma$ does not have a star with apex in $[]$, then
 $\gamma(a_{i}b_{q})=c_1$ and as before we can assume that any segment incident with $b_{p}$ is colored with $c_1$. 
 Let $c_0=\gamma(a_{j}w)$. If $\gamma(b_{q}w)=c_0$, then $c_0$ is star with apex $w$, and so $|\gamma(Q')|\geq |\gamma(Q'\setminus \{w\})|+1=6$ by Proposition~\ref{p:closed}~($iii$) and Lemma~\ref{thm:luis}. Thus, we may assume that 
$\gamma(b_{q}w)\neq c_0$. Since $c_0\notin \gamma(T_1)\cup \{c_1, \gamma(b_{q}w)\}$, then $c_0=c_3$ or $c_0$ is the $6$th required color.
Since $c_3$ cannot be a star of $\gamma$, then $\gamma(a_{i}w)\circeq c_3$. By applying Lemma~\ref{l:3vs4} to $Q''=Q'\setminus \{a_{j}, b_{p}\}$ 
 we have $|\gamma(Q')|\geq |\gamma(Q'')|\geq 6$, as required. 
\halnospace

\vskip 0.1cm
\noindent {(1.3.1)}  Suppose that $\gamma(\ell_2)=c_1$ and $\gamma(\ell_4)=c_3$. Since $\gamma$ does not have a star with apex in $[]$, then
 $\gamma(a_{j}b_{p})=c_1$ and as before we can assume that any segment incident with $a_{i}$ is colored with color $c_1$.

\begin{claim}\label{cl:thereisapex} $\gamma(Q)$ has a proper star with apex $u\in T_2$ or $|\gamma(Q)|\geq 8$. 
 \end{claim}
 \noindent{\em Proof of Claim}~\ref{cl:thereisapex}. Suppose that $t^2_2$ is not an apex of $\gamma(Q)$. Then $e_2=t_2^2t_2^k$ for some $k\in\{1,3\}$.
 Let $l$ be such that $\{l,k\}=\{1,3\}$ and let $\Delta:=\Delta(e_2, a_{j})$. 
  Additionally, we suppose that $t_2^l$ is not an apex of $\gamma(Q)$. This implies that $\gamma(a_{j}t^2_2)=c_5$ and, as a consequence, 
  any segment of $\qq$ coloured with $c_5$ must be incident with a corner of $\Delta$.  Since $e_2$ and $a_{j}t^2_2$ are sides of $\Delta$, $|\gamma(\Delta)|\geq 2$. 

We now note that no color of $\gamma(\Delta)$ belongs to $\gamma(Q')$ for $Q'=Q\setminus \Delta$. Then $|\gamma(Q)|\geq |\gamma(Q')|+|\gamma(\Delta)|$. Since
$|\gamma(Q')|\geq 5$ by Lemma~\ref{l:TDT}, then $|\gamma(\Delta)|=2$ or we are done. Then we must have that $\gamma(a_{j}t_2^k)=c_6$. 
From this last it follows that $c_6$ must be a star of $\gamma$ with apex $t^k_2\in T_2$, as claimed.
\halnospace  

\vskip 0.1cm
By Claim~\ref{cl:thereisapex} we know that $\gamma(Q)$  has a star with apex $u\in T_2$. By Proposition~\ref{p:closed}~($iii$), it is enough to show
$|\gamma(Q')|\geq 7$ for $Q'=Q\setminus \{u\}$. Let $\{t_2^l,t_2^k\}=T_2\setminus \{u\}$ with $l<k$, and let $\Delta':=\Delta(t_2^l, t_2^k,a_{j})$. 
Since $\Delta'$ is a separable subset of $Q'$, we may assume that $|\gamma(\Delta')|\in \{1,2\}$ or we are done by Lemma~\ref{thm:luis}.

Suppose that $|\gamma(\Delta')|=1$. Then either $l(e_1)a_{j}$ or $r(e_1)t_2^l$ provides the $6$th color of $\gamma(Q')$, and so 
$\gamma(b_{q}t_2^k)\circeq c_3$. This and the fact that
 $b_{q}$ cannot be apex of any star of $\gamma$ imply $\gamma(b_{p}t_2^k)\circeq c_3$. By applying Lemma~\ref{l:4vs5} to 
 $Q''=Q'\setminus \{a_{i}, b_{q}\}$ we have $|\gamma(Q')|\geq |\gamma(Q'')|\geq 7$, as required. 

Suppose now that $|\gamma(\Delta')|=2$. If $\gamma(a_{j}t_2^l)=\gamma(a_{j}t_2^k)$ then 
$a_{j}$ is an apex of $\gamma(Q')$ and we are done by Lemma~\ref{l:TDT}.  Then 
$t_2^lt_2^k$ must have the same color $c'_5$ that some of $a_{j}t_2^l$ or $a_{j}t_2^k$. From this last it is easy to see that
$c'_5$ must be a star of $\gamma$ with apex $v\in \{t_2^l, t_2^k\}$, and so the required inequality follows by applying Claim~\ref{cl:3vs4} to
$Q\setminus \{u,v\}$. This proves (1.3.1).

\vskip 0.1cm
\noindent {(1.3.2)} Suppose that $\gamma(\ell_2)=c_3$ and $\gamma(\ell_4)=c_1$. Since $\gamma$ does not have a star with apex in $[]$, then
 $\gamma(a_{i}b_{q})=c_1$ and as before we can assume that any segment incident with $b_{p}$ is colored with color $c_1$.

\begin{claim}~\label{cl:T1<2star} 
 If  $u,v\in T_1$ are such that $\gamma(Q\setminus \{u,v\}) \subseteq \gamma(Q)\setminus \gamma(T_1)$, then $|\gamma(Q)|\geq 8$.
\end{claim}

\noindent{\em Proof of Claim}~\ref{cl:T1<2star}. Suppose that $T_{1}=\{u,v,w\}$. Since $|\gamma(T_1)|=2$ and 
$\gamma(Q\setminus \{u,v\}) \subseteq \gamma(Q)\setminus \gamma(T_1)$, it is enough to show that $|\gamma(Q')|\geq 6$ for $Q'=Q\setminus \{u,v\}$.
 
 Let $c_7:=\gamma(b_{q}w)$ and note that $c_7\notin \{c_1, c_3, c_5, c_6\}$. Thus, we need to show the existence of one additional color. Since 
 Lemma~\ref{thm:luis} implies that $|\gamma(Q\setminus\{u,v,w\})|\geq 5$, we can assume that $w$ is not an apex of $\gamma(Q')$. 
 This last implies that $\gamma(a_iw)\neq c_7$, and so $\gamma(a_iw)\circeq c_3$ and $\gamma(a_jw)\circeq c_3$. Then we can modify 
 $\gamma$, if necessary, by recoloring with $c_3$ all segments in $a_{i}*(T_2\cup \{w\})$. It is not hard to see this modification does 
 not affect the essential properties of $\gamma(Q')$.  
  
We may assume that $c_5$ is a thrackle of $\gamma(Q')$. Indeed, if $c_5$ is a star with apex $x\in T_2$, then Proposition~\ref{p:closed}~$(iii)$ and 
Lemma~\ref{thm:luis} imply $|\gamma(Q')|\geq|\gamma(Q'\setminus \{x\})|+1\geq 6$, as required. 
Then $e_2=t_2^2t_2^k$ for some $k\in \{1,3\}$ and $\gamma(a_{j}t^2_2)=c_5$. These imply $\gamma(a_{j}t_2^k)\circeq c_6$ and so
$c_6$ must be a star with apex $t_2^k$. By Proposition~\ref{p:closed}~$(iii)$ and 
Lemma~\ref{thm:luis} we have $|\gamma(Q')|\geq|\gamma(Q'\setminus \{t^2_k\})|+1\geq 6$, as required. 
\halnospace

\begin{claim}\label{cl:thereisapexinT1} $\gamma(Q)$ has a proper star with apex $u\in T_1$ or $|\gamma(Q)|\geq 8$. 
 \end{claim}
 \noindent{\em Proof of Claim}~\ref{cl:thereisapexinT1}. Suppose that $t_1^2$ is not an apex of $\gamma(Q)$. Then $e_1=t_1^2t_1^k$ for some $k\in\{1,3\}$.
 Let $l$ be such that $\{l,k\}=\{1,3\}$ and let $\Delta:=\Delta(e_1,b_{q})$. 
 Additionally, we may assume that $t_1^l$ is not an apex of $\gamma(Q)$. This implies that $\gamma(b_{q}t_1^2)=c_2$ and, as a consequence, 
  any segment of $\qq$ coloured with $c_2$ must be incident with a corner of $\Delta$.  Since $e_1$ and $b_{q}t_1^2$ are sides of $\Delta$, $|\gamma(\Delta)|\geq 2$. 

We now note that no color of $\gamma(\Delta)$ belongs to $\gamma(Q')$ for $Q'=Q\setminus \Delta$. Then $|\gamma(Q)|\geq |\gamma(Q')|+|\gamma(\Delta)|$. Since
$|\gamma(Q')|\geq 5$ by Lemma~\ref{l:TDT}, then $|\gamma(\Delta)|=2$ or we are done. Then we must have that $\gamma(b_{q}t_1^k)=c_4$. 
From this last it follows that $c_4$ must be a star of $\gamma$ with apex $t_1^k\in T_1$, as claimed.
\halnospace

\vskip 0.1cm
By Claim~\ref{cl:thereisapexinT1} we know that $\gamma(Q)$ has a star with apex $u\in T_1$. Let $t_1^l,t_1^k\in T_1\setminus \{u\}$ with $l<k$, and let 
$\Delta':=\Delta(t_1^l, t_1^k, b_{q})$. Since $\Delta'$ is a separable subset of $Q'$ with respect to $\gamma$, we may assume that
  $|\gamma(\Delta')|\in \{1,2\}$ or we are done. Moreover, it is not hard to see that $|\gamma(\Delta')|=2$ implies that some chromatic class of $\Delta'$ is a proper star of
  $\gamma(Q)$ with apex $v\in \Delta'$. If $v=b_{q}$ the required result follows by applying Lemma~\ref{l:TDT} and Proposition~\ref{p:closed}~($iii$) to 
  $Q\setminus \{v\}$. Similarly, if 
$v\in \{t_1^l, t_1^k\}$ we are done by Claim~\ref{cl:T1<2star}. Thus, we can assume that $|\gamma(\Delta')|=1$ and that neither $t_1^l$ nor $t_1^k$ is an 
apex of $\gamma(Q)$. By Proposition~\ref{p:closed}~($iii$), it is enough to show $|\gamma(Q')|\geq 7$ for $Q'=Q\setminus \{u\}$. 
We remark that $c_1, c_3, \gamma(\Delta'), c_5$ and $c_6=\gamma(e_2)$ are pairwise distinct.

 Clearly, some of $t_1^k l(e_2)$ or $b_qr(e_2)$ provides the $6$th of $\gamma(Q')$. If 
$\gamma(b_qr(e_2))$ is such $6$th color, then $\gamma(t_1^kl(e_2))\circeq c_6$ and either $t_1^la_i$ or $t_1^k a_{j}$ provides the $7$th required color.
 Thus we may assume that $\gamma(t_1^kl(e_2))$ is the $6$th color, say $c'_6$. Then $\gamma(b_qr(e_2))\circeq c_6$, 
$\gamma(t_1^ll(e_2))=c'_6$, $\gamma(t_1^la_{i})\circeq c_3$, $\gamma(t_1^la_{j})\circeq c_3$, $\gamma(t_1^ka_{j})\circeq c'_6$
 and $\gamma(b_ql(e_2))\circeq c_6$. The $7$th color is given by $t_2^3a_{j}$ if $t_2^3\in e_2$, and by either  
 $t_2^2a_{j}$ or $b_qt_2^3$ when $t^3_2\notin e_2$. This proves (1.3.2).

\vskip 0.1cm
{\sc Case 2}. Suppose that  $\gamma([])=3$. Then one of $\gamma(\ell_2)\notin \{c_1,c_3\}$ or $\gamma(\ell_4)\notin \{c_1,c_3\}$ holds. Suppose that
 $c_2=\gamma(\ell_2)\notin \{c_1,c_3\}$. Then $\gamma(\ell_4)\in \{c_1,c_3\}$.  Note that if $\gamma(\ell_4)=c_1$, then $\ell_3$ can be recolored with $c_2$ 
 and we are done by Case 1. Then we can assume $\gamma(\ell_4)=c_3$. Since any segment that intersects $\ell_2$ also intersect the diagonal $\ell'_2$ of $[]$ that goes from
 $\ell_1\cap \ell_4$ to $\ell_2\cap \ell_3$, we can assume that $\gamma(\ell'_2)=c_2$. Similarly, if $\ell'_4$ is the diagonal of $[]$ that goes from
 $\ell_1\cap \ell_2$ to $\ell_3\cap \ell_4$, we can assume that $\gamma(\ell'_4)=c_3$. Since $c_1$ cannot be a star of $\gamma(Q)$, then $c_1$ must be 
 a triangle $\Delta$ with a side in $\ell_1$.

\vskip 0.1cm
Let $U_{\ell_1}$ be as in Definition~\ref{d:U}. From the definition of $\ell_1$ it is not hard to see that each point of $U_{\ell_1}$ is the apex of a proper star of $\gamma$.
Thus, we may asssume that $|U_{\ell}|\leq 2$, as otherwise Proposition~\ref{p:closed}~($iii$) and Lemma~\ref{thm:luis} imply $|\gamma(Q)|\geq 8$. 
Let $\{v_0, \ldots, v_s\}$ be as in Definition~\ref{d:U}. We remark that $s\geq 4$ because $|U_{\ell_1}|\leq 2$.
 Let $Q'=Q\setminus U_{\ell_1}$. By Proposition~\ref{p:closed}~($iii$), it is enough to show $|\gamma(Q')|\geq |Q'|-2$. 
 
 Let $v_i\in \{v_0, \ldots, v_s\}$ be the corner of $\Delta$ in $T_1\cup T_2$.
 Suppose first that $v_i=v_0$, let $\Delta_j:=\Delta(v_0v_1,x_k)$ with $x_k=\ell_1\cap \ell_k$ and $k\in \{2,4\}$, and let $c_4=\gamma(v_0v_1)$.
Cearly, $c_4\notin \{c_1, c_2, c_3\}$, $x_2=a_i,$ and $x_4=b_p$. It is not hard to see that  $|\gamma(Q')|\geq |\gamma(Q'\setminus\Delta_{j})|+|\gamma(\Delta_{j})|$. Since 
$|\gamma(Q'\setminus \Delta_{j})|\geq |Q'\setminus \Delta_{j}|-2$ by Lemma~\ref{l:TDT}, we can assume that $|\gamma(\Delta_j)|=2$ and so that $\gamma(\Delta_j)=\{c_1, c_4\}$. From this last it follows that
$\gamma(v_1x_2)=c_4=\gamma(v_1x_4)$, contradicting that $v_1\notin U_{\ell_1}$. Then we can assume that $v_i\neq v_0$. Since $v_0\notin U_{\ell_1}$, the triangle 
$\Delta_{\ell_1}:=\Delta(\ell_1, v_0)$ satisfies $|\gamma(\Delta_{\ell_1})|=3$. Suppose that some $c_j\in \{c_2, c_3\}$ is in $\gamma(\Delta_{\ell_1})$. Since any segment of color $c_j$ intersects to $\ell_3$, we can recolor $\ell_3$ (if necessary) with $c_j$. Then $\gamma(Q'\setminus \Delta_{\ell_1})=\gamma(Q') \setminus \{c_1,c_k\}$ with 
$\{c_j, c_k\}=\{c_2,c_3\}$ and $\ell_3$ is the only segment of $Q'\setminus \Delta_{\ell_1}$ colored with $c_j$. Moreover, we note that in such a case 
$Q'\setminus \Delta_{\ell_1}$ and $\ell_3$ satisfy the conditions of  Lemma~\ref{l:/TTb}, and so we can conclude that $|\gamma(Q'\setminus \Delta_{\ell_1})|\geq |Q'\setminus\Delta_{\ell_1}|-1$, or equivalently, $|\gamma(Q')|\geq |Q'|-2$.  Then we can assume that neither $c_2$ nor $c_3$  belongs to $\gamma(\Delta_{\ell_1})$. 
From this fact it is easy to see that $|\gamma(Q')|\geq |\gamma(Q'\setminus\Delta_{\ell_1})|+|\gamma(\Delta_{\ell_1})|$. Since $|\gamma(\Delta_{\ell_1})|=3$, and 
 $ |\gamma(Q'\setminus\Delta_{\ell_1})|\geq |Q'\setminus \Delta_{\ell_1}|-2$ by Lemma~\ref{l:/TT}, then $|\gamma(Q')|\geq |Q'|-2$. The case in which
$\gamma(\ell_4)\notin \{c_1,c_3\}$ can be handled in a similar way.
\end{proof}


\begin{lemma}\label{l:2T3} Let  $\Delta_U$ be a triangle with corners in $U \in \{A,B\}$, 
$v_p, v_q\in (A\cup B)\setminus U$ and $Q:=T_1 \cup \Delta_U \cup \{v_p, v_q\} \cup T_2$. If $\gamma$ is an optimal coloring of $D(Q)$, then $|\gamma(D(Q))|\geq 9$.   
\end{lemma}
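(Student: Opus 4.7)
The plan is to follow closely the structure of Lemmas~\ref{l:1T3} and~\ref{l:2T2}. By Proposition~\ref{p:closed}($i$), it suffices to treat the case $Q = T_1 \cup \Delta_U \cup \{v_p, v_q\} \cup T_2$, so $|Q|=11$ and we must show $|\gamma(Q)|\geq 9$. The first reduction is standard: if $\gamma$ has a star with apex $v\in \Delta_U\cup\{v_p,v_q\}$, then Proposition~\ref{p:closed}($iii$) together with Lemma~\ref{l:2T2} (when $v\in \Delta_U$, leaving two corners of $\Delta_U$ plus $v_p,v_q$) or Lemma~\ref{l:1T3} (when $v\in\{v_p,v_q\}$, leaving $\Delta_U$ plus the remaining point) gives $|\gamma(Q\setminus\{v\})|\geq 8$, hence $|\gamma(Q)|\geq 9$. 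So throughout we assume no such apex exists.

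Since $\Delta_U$ is a separable subset of $Q$ whose sides are clean in $\qq$, the absence of an apex in $\Delta_U$ forces $|\gamma(\Delta_U)|\neq 2$ (a $2$-coloring of a monochromatic-free triangle creates an apex at the common endpoint of the two equally-colored sides). Hence $|\gamma(\Delta_U)|\in\{1,3\}$. When $|\gamma(\Delta_U)|=3$ we are done immediately: by Proposition~\ref{p:closed}($ii$) and Lemma~\ref{l:/TT} applied to $Q\setminus \Delta_U = T_1\cup\{v_p,v_q\}\cup T_2$,
\[
|\gamma(Q)|\geq |\gamma(Q\setminus\Delta_U)|+|\gamma(\Delta_U)|\geq 6+3=9.
\]

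The remaining (and main) case is $|\gamma(\Delta_U)|=1$. Let $u_i, u_j, u_k$ be the corners of $\Delta_U$ with $i<j<k$, and let $c_1$ be the common color of the three sides of $\Delta_U$. Let $[]$ denote the convex quadrilateral with corners $u_i, u_k, v_p, v_q$ (by general position and the geometry of $C_{5,5}$ this is indeed a quadrilateral). Apply Lemma~\ref{l:2T2} to $Q' := T_1\cup[]\cup T_2$ to obtain $|\gamma(Q')|\geq 8$. It then remains to produce a color from some segment incident with $u_j$ that lies outside $\gamma(Q')$. The argument would split on the position of $u_j$ with respect to $[]$: either $u_j$ lies in the interior of $[]$ (in which case the segment $u_jv_p$ or $u_jv_q$ together with the monochromatic triangle $\Delta_U$ of color $c_1$ and the separability of $\Delta_U$ yield the extra color, via a localized application of Proposition~\ref{p:closed}($ii$)), or $u_j$ lies outside $[]$ on the $U$-side, in which case the triangle $\Delta(u_i,u_j,u_k)$ is isolated from $Q'$ by the sides of $\Delta_U$ and one may use Lemma~\ref{l:/TTb} on $T_1\cup\{u_i,u_k\}\cup T_2$ with the segment $u_iu_k$ of color $c_1$ acting as the ``isolated'' segment.

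The hard part will be the last step: showing that the single extra point $u_j$ cannot have all its incident segments colored out of the eight colors already forced by Lemma~\ref{l:2T2} on $Q'$. The danger is that $u_j$ could be painted entirely in color $c_1$ (matching $\Delta_U$) together with colors already appearing in $\gamma(Q')$. The plan to eliminate this is to use $U_\ell$-style arguments (Definition~\ref{d:U}) on a suitable side $\ell\in\{u_iv_p, u_iv_q, u_kv_p, u_kv_q\}$ of $[]$, mimicking the closing moves of Lemmas~\ref{l:TDT} and~\ref{l:2T2}: bound $|U_\ell|\leq 2$ via Lemma~\ref{thm:luis} and Proposition~\ref{p:closed}($iii$), then study the triangle $\Delta_\ell$ and the quadrilateral it spans with $u_j$, forcing either a new color on some $u_jt$ with $t\in T_1\cup T_2$ or a star with apex at a corner of $[]$ (which contradicts our no-apex assumption). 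The symmetric case $U=B$ is handled identically by swapping the roles of $A$ and $B$.
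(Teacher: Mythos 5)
Your opening reductions match the paper's: the elimination of apices in $\Delta_U\cup\{v_p,v_q\}$ via Lemmas~\ref{l:2T2} and~\ref{l:1T3} with Proposition~\ref{p:closed}($iii$), and the reduction to $|\gamma(\Delta_U)|=1$, are exactly how the paper begins. (One small slip there: for $|\gamma(\Delta_U)|=3$ you invoke Lemma~\ref{l:/TT} on $T_1\cup\{v_p,v_q\}\cup T_2$, but that lemma requires one point from each of $A$ and $B$, whereas $v_p,v_q$ both lie in $(A\cup B)\setminus U$; the correct citation is Lemma~\ref{l:TXT}, which gives the same bound of $6$.) Also, by the double-chain structure the middle corner $u_j$ always lies inside $\Delta(u_i,u_k,y)$ for $y$ in the opposite chain, hence inside $[]$, so your second geometric branch never occurs.

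The genuine gap is in the main case $|\gamma(\Delta_U)|=1$. Your route --- apply Lemma~\ref{l:2T2} to $Q'=T_1\cup[]\cup T_2$ to get $8$ colors and then extract a ninth color from a segment incident with $u_j$ --- is a different decomposition from the paper's, but the crucial step is only announced, not proved. Knowing $|\gamma(Q')|\geq 8$ does not by itself prevent every segment at $u_j$ from reusing colors of $\gamma(Q')$ (note $\gamma(u_ju_i)=\gamma(u_ju_k)=c_1\in\gamma(Q')$ already), and your proposed dichotomy ``new color on some $u_jt$ or a star with apex at a corner of $[]$'' is not established; moreover a chromatic class that is a star of the \emph{restriction} $\gamma|_{\qq'}$ need not be a star of $\gamma(Q)$, since in $Q$ it may gain segments incident with $u_j$ that destroy the apex, so the contradiction with your no-apex assumption does not follow automatically. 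This missing step is precisely where the paper does its real work: it splits on whether $T_1\cup T_2$ lies inside or outside $[]$, in the interior case runs the $U_\ell$/$\Delta_\ell$ machinery of Definition~\ref{d:U} against the triangles $\Delta_1,\Delta_2$ (applying Lemmas~\ref{l:2T2} and~\ref{l:1T3} to $Q'\setminus\Delta_i$ rather than to $Q'$ itself), and in the exterior case first normalizes $\gamma$ on the five points $\{u_i,u_j,u_k,v_p,v_q\}$ to one of the four configurations of Figure~\ref{fig:4cases} and then performs a long explicit analysis over $|\gamma(T_1)|,|\gamma(T_2)|$ and the apices in $T_1$. Without carrying out an argument of comparable force, the proof is incomplete.
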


\begin{proof} Let $u_i, u_j, u_k$ be the corners of  $\Delta_U$ with $i<j<k$. W.l.o.g.
we assume that $p<q$.  We may assume that $\gamma$ does not have a star 
with apex $w\in  \Delta_U$ (resp. $w\in \{v_p, v_q\}$). Indeed, if such an apex $w$ exists, then we can deduce the required inequality by applying Lemma~\ref{l:2T2} 
(resp. Lemma~\ref{l:1T3}) and Proposition~\ref{p:closed}~($iii$) to $Q\setminus \{w\}$. From Lemma~\ref{l:TXT} we know that  
$|\gamma(Q\setminus \Delta_U)|\geq 6$. From this and the fact that $\Delta_U$ is a separable triangle of $Q$, we have that
$|\gamma(Q)|\geq |\gamma(Q\setminus \Delta_U)|+|\gamma(\Delta_U)|\geq 6+ |\gamma(\Delta_U)|$ and so we may assume that $|\gamma(\Delta_U)|\in \{1,2\}$, 
as otherwise we are done. Moreover, since $|\gamma(\Delta_U)|=2$ implies that some vertex $w$ of $\Delta_U$ is an apex of $\gamma(Q)$, we may assume 
 $|\gamma(\Delta_U)|=1$. Let $c_0=\gamma(\Delta_U)$ and let $[]$ be the convex quadrilateral defined by 
$\{u_i, u_k, v_p, v_q\}$. Since $c_0, \gamma(u_iv_p), \gamma(u_kv_q)$ are pairwise distinct, then $|\gamma([])|\geq 3$. See Figure~\ref{fig:4cases}.  

\vskip 0.1cm
{\sc Case 1}. Suppose that $T_1\cup T_2$ lies in the interior of $[]$. Then $[]=\overline{Q}$. If $\gamma([])=4$ then $|\gamma(Q)|\geq |\gamma(Q\setminus [])|+4$. Since  
$|\gamma(Q\setminus [])|\geq 5$ by Lemma~\ref{l:TXT}, we can assume that $|\gamma([])|=3$. 

Let $c_1=\gamma(v_pv_q)$. Let $f\in \{u_iv_p, u_kv_q\}$ be such that $\gamma(f)=c_1$. We note that $f$ exists and is uniquely determined, because $|\gamma([])|=3$. 
Since $\gamma$ has no stars with an apex in $[]$, the triangle $\Delta(f, v_pv_q)$ must be colored with $c_1$. Let $\ell$ be the opposite segment to $f$ in $[]$, let 
$\ell':=u_iu_k$ and let $c_2=\gamma(\ell)$. Then $\gamma([])=\{c_0, c_1, c_2\}$.

Let $U_{\ell}$ be as in Definition~\ref{d:U}. From the choice of $\ell$ it is not hard to see that each point of $U_{\ell}$ is the apex of a proper star of $\gamma$.
Thus, we may asssume that $|U_{\ell'}|\leq 2$, as otherwise Proposition~\ref{p:closed}~($iii$) and Lemma~\ref{thm:luis} imply $|\gamma(Q)|\geq 9$. 
Let $\{v_0, \ldots, v_s\}, \Delta_{\ell}, \ell_1, \ell_2$ and $\Delta_1, \Delta_2$ be as in Definition~\ref{d:U}. We remark that they are well-defined because $|U_{\ell}|\leq 2$. 
Let $Q'=Q\setminus U_{\ell}$. By Proposition~\ref{p:closed}~($iii$), it is enough to show $|\gamma(Q')|\geq 9- |U_{\ell}|$. 

By an argument totally analogous to the one used in the proof of Claim~\ref{cl:xDelta'}, we can assume that $\gamma(\Delta_{\ell})=\{c_2\}$. 
From $v_1\notin U_{\ell}$ we know that $|\gamma(\Delta_i)|=3=|\Delta_i|$ for some $i\in \{1,2\}$. From the position of the points of $\Delta_i$ 
it is not hard  to see that any segment colored with some color in $\gamma(\Delta_i)$ is incident with a point of $\Delta_i$, and hence
 $|\gamma(Q')|\geq |\gamma(Q'\setminus \Delta_i)|+|\gamma(\Delta_i)|$. 
 
 If $\Delta_i\cap \Delta_U\neq \emptyset$, then $Q'\setminus \Delta_i \subset Q \setminus (\Delta_i\cap \Delta_U)$ and so Lemma~\ref{l:2T2} and 
Proposition~\ref{p:closed}~($ii$) imply $|\gamma(Q'\setminus \Delta_i)|\geq |Q'\setminus \Delta_i|-2$.  If $\Delta_i\cap \Delta_U=\emptyset$, then 
$Q'\setminus \Delta_i \subset Q \setminus (\Delta_i\cap \{v_p, v_q \})$ and so Lemma~\ref{l:1T3} and Proposition~\ref{p:closed}~($i$) 
imply $|\gamma(Q'\setminus \Delta_i)|\geq |Q'\setminus \Delta_i|-2$. In any case, we have $|\gamma(Q')|\geq |Q'\setminus \Delta_i|-2+|\gamma(\Delta_i)|$. Since  
 $|Q'\setminus \Delta_i|=11-|U_{\ell}|-|\Delta_i|$ we have $|\gamma(Q')|\geq (11-|U_{\ell}|-|\Delta_i|) - 2+|\gamma(\Delta_i)|=9-|U_{\ell}|$, as required.

\begin{figure}[t]
	\centering
	\includegraphics[width=1\textwidth]{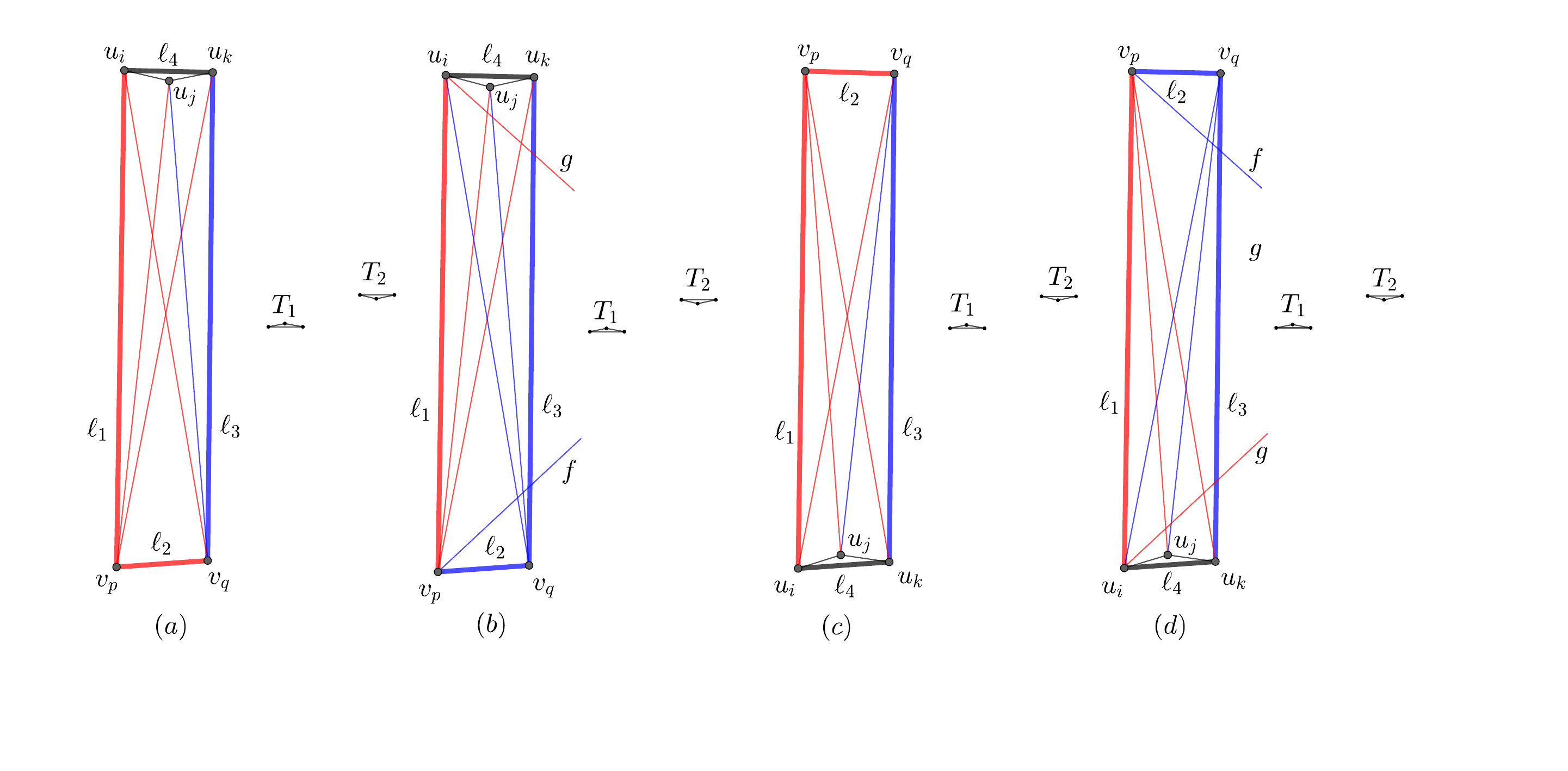}
	\caption{\small Here $T_1\cup T_2$ lies in the exterior of the convex quadrilateral $[]$ formed by the thick segments.
	 W.l.o.g. we may assume that if $S~=~\{u_i, u_j, u_k, v_p, v_q\}$, then
	the segments with both endpoints in $S$ are coloured by $\gamma$ as illustrated in some of these four cases. In (b) $f\in v_p*(T_1\cup T_2)$ and 
	$g\in u_i*(T_1\cup T_2)$. The existence of such $f$ (resp. $g)$ it follows from the assumption that $v_q$ (resp. $v_p$) cannot be an apex of $[]$. Similarly,
	we can assume the existence of the corresponding $f$ and $g$ in (d).}
	\label{fig:4cases}
\end{figure}

{\sc Case 2}. Suppose that $T_1\cup T_2$ lies in the exterior of $[]$. Then the index $i$ of $u_i$ is at most 3, and hence $T_1\cup T_2$ must lie on the
right side of $[]$, as depicted in Figure~\ref{fig:4cases}. Let $\ell_1=u_iv_p, \ell_2=v_pv_q, \ell_3=v_qu_k$ and
 $\ell_4=u_iu_k$ be the sides of $[]$ and let $S=\{u_i, u_j, u_k, v_p, v_q\}$. 
 By Lemma\ref{thm:luis} we may assume that $\gamma$ has at most 2 stars with apices in $T_1\cup T_2$.  


\begin{claim}\label{cl:4cases}
W.l.o.g. we may assume that the segments with both endpoints in $S$ are coloured by $\gamma$ as in some of the four cases illustrated in Figure~\ref{fig:4cases}.
\end{claim}

\noindent{\em Proof of Claim}~\ref{cl:4cases}. Suppose first that $\Delta_U\subset A$. Thus the points of $S$ are accommodated as depicted in Figure~\ref{fig:4cases}~($a)-(b$).
Note that if $\ss_1=\{v_pu_j, v_pu_k, v_qu_i\}$, then $\ell_1\cup \ss_1$ form a thrackle and moreover, if $\ell\in \qq\setminus \{\ell_1\}$ is such
that $\ell_1\cap \ell\neq \emptyset$, then $\ell \cap \ell'\neq \emptyset$ for each $\ell'\in \ss_1$. From these it follows that we can recolour each segment of $\ss_1$ with color
$\gamma(\ell_1)$ without affecting the essential properties of $\gamma(Q)$. We now consider two subcases, depending on whether or not $\gamma(\ell_1)=\gamma(\ell_2)$.

Suppose that $\gamma(\ell_1)=\gamma(\ell_2)$. Then $\gamma(u_jv_q)\notin \{\gamma(\ell_1), \gamma(\ell_4)\}$. We note that any
$\ell\in \qq\setminus \{u_jv_q\}$ with $\gamma(\ell)=\gamma(u_jv_q)$ must intersect $\ell_3$. Then, if necessary, 
we can recolour $\ell_3$ with color $\gamma(u_jv_q)$ without affecting the essential properties of $\gamma(Q)$, and so we are in the case depicted in 
Figure~\ref{fig:4cases}~($a$).

Suppose now that $\gamma(\ell_1)\neq\gamma(\ell_2)$. We note that if $\ss_2=\{v_qu_i, v_qu_j, v_qu_k\}$, then $\{\ell_2\}\cup \ss_2$ is a thrackle. Moreover, note that if 
$\gamma(\ell)\notin  \{\gamma(\ell_1), \gamma(\ell_4)\}$ and $\ell\cap \ell_2\neq \emptyset$, then $\ell$ intersects to each segment of $\ss_2$, and so we can recolour each segment of $\ss_2$ with color $\gamma(\ell_2)$, lying in the case depicted in Figure~\ref{fig:4cases}~($b$). 

An analogous reasoning shows that if $\Delta_U\subset B$, then $\gamma(S)$ is as in some of the cases depicted in Figure~\ref{fig:4cases}~($c)-(d)$.  
\halnospace

From now on, we let $c_1=\gamma(\ell_1)$ and $c_3=\gamma(\ell_3)$. We recall that $c_0=\gamma(\Delta_U)$. 

\vskip 0.1cm
{\sc Case 2.1}. Suppose that the segments in $\ss$ are coloured by $\gamma$ as in Figure~\ref{fig:4cases}~($a$). By recoloring, if necessary,
we may assume that each segment incident with $v_p$ is coloured with $c_1$. We remark that this assumption does not affect the essential 
properties of $\gamma(Q)$. 

\vskip 0.1cm
{\sc Case 2.1.1}. Suppose that $|\gamma(T_1)|=1$. Let $\Delta:=\Delta(t^1_1, t^3_1,v_q)$. We note that any segment 
that crosses $\Delta$ is incident with $v_p$ and so has color $c_1$. Then $\Delta$ is a separable subset of $Q$ with respect to $\gamma$ and so we can assume that 
$|\gamma(\Delta)|\in \{1,2\}$, as otherwise we are done by Lemma~\ref{l:1T3}. This assumption and $|\gamma(T_1)|=1$ imply that $|\gamma(\Delta)|=2$. From this last it follows 
that $\gamma$ has a star with apex $v_q\in []$, and so we are done by applying  Lemma~\ref{l:1T3} and Proposition~\ref{p:closed}~($iii$) 
to $Q\setminus \{v_q\}$.

{\sc Case 2.1.2}. Suppose that $|\gamma(T_1)|=2$ and $|\gamma(T_2)|=1$. We remark that the 5 colors in $C=\{c_0, c_1, \gamma(T_2)\}\cup \gamma(T_1)$ are pairwise
distinct. Let $c_6=\gamma(t^3_2u_k), c_7=\gamma(t^1_2u_j)$ and $c_8=\gamma(t^2_2v_q)$ 
and note that the colors in $C\cup \{c_6, c_7, c_8\}$ are pairwise distinct.  Then $\gamma(l(e_1)u_i)\circeq \gamma(e_1)$, $\gamma(r(e_1)u_j)\circeq c_7$, 
$\gamma(t^1_2u_k)\circeq c_6$ and $\gamma(t^3_2v_q)\circeq c_8$. From these it follows that $\gamma(r(e_1)t^2_2)$ is the
$9$th color.

\vskip 0.1cm
{\sc Case 2.1.3}. Suppose that $|\gamma(T_1)|=2=|\gamma(T_2)|$. Let $e_2$ be as in (N3). 
\vskip 0.1cm
\noindent $\bullet$ Suppose that $\gamma(Q)$ has 2 stars with apices $u,v\in T_1$. It is enough to show that $|\gamma(Q\setminus \{u,v\})|\geq 7$.
 Let $w$ be such that $T_1=\{u,v,w\}$.
By Lemma~\ref{l:TXT} we may assume that no point of $T_2\cup \{w\}$ is an apex of $\gamma$. We note that the 5 
colors in $C=\{c_0, c_1, c_3\}\cup \gamma(T_2)$ are pairwise distinct. Then $c_6=\gamma(wu_k)$ must be the 
$6$th color, and so $\gamma(wv_q)\circeq c_3$. Suppose first that $t^3_2\in e_2$. Since $c_3$ cannot be a star, then $\gamma(\Delta)=\gamma(e_2)$ where 
$\Delta:=\Delta(e_2, v_q)$. Then either $l(e_2)w$ or $t_2^3u_k$ provides the $7$th color.
Suppose now that $t^3_2\notin e_2$. Then $e_2=t^1_2t^2_2$, and either $\gamma(v_qt^3_2)$ is the $7$th required color or 
$\gamma(v_qt^3_2)=\gamma(t^1_2t^3_2)=\gamma(t^2_2t^3_2)$. Since these equalities imply that $t^3_2\in T_2$ is an apex of $\gamma$, we are done.

\vskip 0.1cm
\noindent $\bullet$ Suppose that $\gamma(Q)$ has exactly 1 star with apex in $T_1$. Let $u$ be the only apex of $\gamma$ in $T_1$, and let $t_1^l, t_1^m$ be such that 
$T_1=\{u,t_1^l, t_1^m\}$ with $l<m$. By Proposition~\ref{p:closed}~($iii$), it is enough to show that $|\gamma(Q\setminus \{u\})|\geq 8$. Let $\Delta:=\Delta(t_1^l, t_1^m,v_q)$. We note that any segment that crosses $\Delta$ is incident with $v_q$ and so has color $c_1$. 
Then $\Delta$ is a separable subset of $Q$ with respect to $\gamma$, and so we can assume that $|\gamma(\Delta)|\in \{1,2\}$, as otherwise we are done by Lemma~\ref{l:1T3}. 
Moreover, since $|\gamma(\Delta)|=2$ implies that $\gamma$ has an apex in a corner of $\Delta$, and this is impossible by previous assumptions, 
we must have $|\gamma(\Delta)|=1$. Then $c_7=\gamma(t_1^lu_k)$ is the $7$th color, and so $\gamma(t_1^mu_k)\circeq c_7$,
$\gamma(t_1^ml(e_2))\circeq \gamma(e_2)$ and $\gamma(v_qr(e_2))\circeq c_3$. Then $t_1^lu_j$ provides the $8$th required color. 
    
\vskip 0.1cm
\noindent $\bullet$ Suppose that no point of $T_1$ is an apex of $\gamma(Q)$. Then either $e_1=t^1_1t^2_1$ or $e_1=t^2_1t^3_1$. Let $c_2$ (resp. $c_4$) be such that
$\{c_2, \gamma(e_1)\}=\gamma(T_1)$ (resp. $\{c_4, \gamma(e_2)\}=\gamma(T_2)$). We remark that the 7 colors in $C=\{c_0, \ldots, c_4, \gamma(e_1), \gamma(e_2)\}$ 
are pairwise distinct. Since $c_2$ cannot be a star of $\gamma(Q)$ and any segment incident with $v_p$ is colored with 
$c_1$, then $\gamma(v_qt^2_1)=c_2$.  

\noindent $-$ Suppose that $e_1=t^1_1t^2_1$. From $\gamma(v_qt^2_1)=c_2$ it follows that $c_8:=\gamma(t^3_1u_k)$ is the $8$th color. 
We may assume that $\gamma(t^1_1v_q)\in \{\gamma(e_1), c_3\}$, as otherwise $\gamma(t^1_1v_q)$ is the required color.
If $\gamma(t^1_1v_q)=\gamma(e_1)$, then $\gamma(t^2_1u_k)\circeq c_8$, $\gamma(t^2_1u_j)\circeq c_3$, and hence either
$l(e_2)t^3_1$ or $r(e_2)v_q$ provides the $9$th color. Suppose now that $\gamma(t^1_1v_q)=c_3$. 
 Since $c_3$ cannot be a star of $\gamma(Q)$ with apex in $v_q$, then neither $l(e_2)v_q$ nor $r(e_2)v_q$ is colored $c_3$.   
Then $\gamma(v_ql(e_2))\circeq\gamma(e_2)\circeq \gamma(v_qr(e_2))$. If $t^3_2\in e_2$, then either $t^3_1l(e_2)$ or $t^3_2u_k$ provides
 the $9$th color, and if $t^3_2\notin e_2$ then $\gamma(t^3_1t^1_2)\circeq c_8$, $\gamma(t^3_2v_q)\circeq c_4$ and so the $9$th required color is provided by 
 $t^2_2u_k$.
 
\noindent $-$ Suppose that $e_1=t^2_1t^3_1$. From $\gamma(v_qt^2_1)=c_2$ it follows that $c_8:=\gamma(t^1_1u_k)$ is the $8$th color. 
 We may assume that $\gamma(t^3_1v_q)\in \{\gamma(e_1), c_3\}$, as otherwise $\gamma(t^3_1v_q)$ is the required color. 
 If $\gamma(t^3_1v_q)=\gamma(e_1)$, then $\gamma(t^2_1l(e_2))\circeq \gamma(e_2)$, $\gamma(r(e_2)v_q)\circeq c_3$, $\gamma(t^1_1u_j)\circeq c_8$
and so $t^2_1u_k$ provides the $9$th color. Suppose now that $\gamma(t^3_1v_q)=c_3$. Then $\gamma(t^1_1u_j)\circeq c_8$. Since $c_3$ cannot be a star of $\gamma(Q)$,
then $\gamma(v_qr(e_2))\circeq \gamma(e_2)$, $\gamma(t^3_1l(e_2))\circeq \gamma(e_1)$ and $t^2_1u_k$ provides the $9$th color.


\vskip 0.1cm
{\sc Case 2.2}. Suppose that the segments in $\ss$ are coloured by $\gamma$ as in Figure~\ref{fig:4cases}~($b$). 

\vskip 0.1cm
{\sc Case 2.2.1}. Suppose that $|\gamma(T_1)|=1=|\gamma(T_2)|$. Let $c_6:=\gamma(t^1_1t^1_2)$, $c_7:=\gamma(t^2_1t^2_2)$ and $c_8:=\gamma(t^3_1t^3_2)$.
Clearly, the 8 colors $c_0, c_1, c_3, c_6, c_7, c_8, \gamma(T_1), \gamma(T_2)$ are pairwise distinct. Then either $t^1_1u_j$ or $t^1_2u_k$ provides the $9$th color.  

\vskip 0.1cm
{\sc Case 2.2.2}. Suppose that $|\gamma(T_1)|=1$ and $|\gamma(T_2)|=2$. Let $\gamma(T_1)=\{c_2\}$, $\gamma(T_2)=\{c_4, \gamma(e_2)\}$, 
$c_7:=\gamma(t^3_1u_k)$ and $c_8:=\gamma(t^2_1u_j)$. Then the $8$ colors $c_0, c_1, c_2, c_3, c_4, c_7, c_8, \gamma(e_2)$ 
are pairwise distinct. Then $\gamma(t^1_1u_j)\circeq c_8$, $\gamma(t^2_1u_k)\circeq c_7$, $\gamma(t_1^3l(e_2))\circeq \gamma(e_2)$, 
$\gamma(t_1^3r(e_2))\circeq \gamma(e_2)$ and $\gamma(t^3_1v_p)\circeq c_3$. Then $r(e_2)v_q$ provides the $9$th color. 

\vskip 0.1cm
{\sc Case 2.2.3}. Suppose that $|\gamma(T_1)|=2$ and $|\gamma(T_2)|=1$. Let $\gamma(T_1)=\{c_2, \gamma(e_1)\}$, 
$\gamma(T_2)=\{c_4\}$, $c_7:=\gamma(t^3_2u_k)$ and $c_8:=\gamma(t^1_2u_j)$. Then the $8$ colors $c_0, c_1, c_2, c_3, c_4, c_7, c_8, \gamma(e_1)$ 
are pairwise distinct.  Then $\gamma(l(e_1)t_2^2)\circeq \gamma(e_1)$, $\gamma(r(e_1)t_2^2)\circeq \gamma(e_1)$,
$\gamma(l(e_1)u_j)\circeq c_8$ and $\gamma(t^1_2u_k)\circeq c_7$. Then $t^3_2r(e_1)$ provides the $9$th color.

\vskip 0.1cm
{\sc Case 2.2.4}. Suppose that $|\gamma(T_1)|=2=|\gamma(T_2)|$. Let $\gamma(T_1)=\{c_2, \gamma(e_1)\}$ and
 $\gamma(T_2)=\{c_4, \gamma(e_2)\}$.
\vskip 0.1cm

\noindent $\bullet$ Suppose that $\gamma(Q)$ has 2 stars with apices $u,v\in T_1$. It is enough to show that $|\gamma(Q\setminus \{u,v\})|\geq 7$. 
 Let $w$ be such that $T_1=\{u,v,w\}$. By Lemma~\ref{l:TXT} we may assume that no point of $T_2\cup \{w\}$ is an apex of $\gamma(Q)$.
 We note that the 5 colors $c_0, c_1, c_3, c_4, \gamma(e_2)$ are pairwise distinct. 
 
Then $c_6:=\gamma(wu_j)$ must be the $6$th color, and so $\gamma(wu_k)\circeq c_6$. Since $t_2^2$ cannot be an apex of $\gamma$,
then either $e_2=t_2^1t_2^2$ or $e_2=t_2^2t_2^3$. If $e_2=t_2^1t^2_2$, then  $\gamma(t^1_2u_k)\circeq \gamma(e_2)$,
$\gamma(t^3_2u_k)\circeq c_4$, $\gamma(wt^2_2)\circeq c_6$, and so $\gamma(wt^1_2)\in \{\gamma(e_2), c_6\}$. We note that if
$\gamma(wt^1_2)=\gamma(e_2)$ (respectively, $\gamma(wt^1_2)=c_6)$) then $t^1_2$ (respectively, $w$) is an apex of $\gamma$, 
contradicting that $\gamma(T_2\cup \{w\})$ has no apices. Thus $e_2=t_2^2t_2^3$ and so $\gamma(t^3_2u_k)\circeq \gamma(e_2)$. 
Since $\gamma(e_2)$ cannot be a star, then 
$\gamma(t_2^2u_k)\circeq \gamma(e_2)$, $\gamma(t_2^3w)\circeq c_6$ and $\gamma(t_2^2w)\circeq c_6$. But these imply  
that $c_6$ is a star with apex in $w$, a contradiction.  

\vskip 0.1cm
\noindent $\bullet$ Suppose that $\gamma(Q)$ has exactly 1 star with apex in $T_1$. Let $u$ be the only apex of $\gamma$ in $T_1$, let $t_1^l, t_1^m$ be such that 
$T_1=\{u,t_1^l, t_1^m\}$ with $l<m$, and let $e=t_1^lt_1^m$. It is enough to show that $|\gamma(Q\setminus \{u\})|\geq 8$. 

 We note that the 6 colors $c_0, c_1, c_3, c_4, \gamma(e), \gamma(e_2)$ are pairwise distinct. Clearly, either $t_1^lu_j$ or $t_1^mu_k$ provides the $7$th color $c_7$.

\vskip 0.1cm
\noindent $-$ Suppose that $\gamma(t_1^lu_j)=c_7$. Then $\gamma(t_1^mu_k)\circeq \gamma(e)$.
Since $t_1^m$ cannot be an apex, there is $h\in t_1^l*T_2$ such that $\gamma(h)=\gamma(e)$. 

If $t_2^3\in e_2$, then $\gamma(t_2^3u_k)\circeq \gamma(e_2)$, $\gamma(l(e_2)t_1^m)\circeq \gamma(e)$ and $\gamma(l(e_2)v_q)\circeq c_3$.
These and the existence of $h$ imply $\gamma(t_1^mv_p)\circeq c_1$ and $\gamma(t_1^mu_i)\circeq c_1$. Since $t_1^l$ cannot be an apex, then 
 $t_1^lv_p$ cannot be colored $c_7$, and so $t_1^lv_p$ provides the required $8$th color. Similarly, if $t_2^3\notin e_2$ then $e_2=t_2^1t_2^2$, and so 
  $\gamma(t_2^1u_k)\circeq \gamma(e_2)$, $\gamma(t_2^2t_1^m)\circeq \gamma(e)$ and $\gamma(t_2^2v_q)\circeq c_3$. 
These and the existence of $h$ imply $\gamma(t_1^mv_p)\circeq c_1$ and $\gamma(t_1^mu_i)\circeq c_1$. Since $t_1^l$ cannot be an apex, then 
 $t_1^lv_p$ cannot be colored $c_7$, and so $t_1^lv_p$ provides the required $8$th color.

\vskip 0.1cm
\noindent $-$ Suppose that $\gamma(t_1^mu_k)=c_7$. Then $\gamma(t_1^lu_j)\circeq \gamma(e)$.
Since $t_1^l$ cannot be an apex, there is $h\in t_1^m*\{u_i, u_j\}$ such that $\gamma(h)=\gamma(e)$. 
Then $\gamma(t_1^mv_q)\circeq c_3$, $\gamma(t_1^lv_p)\circeq c_1$ and $\gamma(t_1^lu_i)\circeq c_1$. Since $t_1^m$ cannot be an apex, then
$\gamma(t_1^mv_p)\neq c_7$ and so $\gamma(t_1^mv_p)\circeq c_3$. 
Then the triangle formed by $e_2$ and $v_q$ must be colored with $\gamma(e_2)$ and so $\gamma(t_1^ml(e_2))\circeq c_7$. 
If $t^3_2\in e_2$, then $t^3_2u_k$ provides the required $8$th color, and if $t^3_2\notin e_2$ then $e_2=t^1_2t^2_2$ and either $t^2_2u_k$ or $t^3_2v_q$ 
 provides the required $8$th color.

\vskip 0.1cm
\noindent $\bullet$  Suppose that $\gamma$ has no stars in $T_1$. Then either $e_1 = t^1_1t^2_1$ or $e_1 = t^2_1t^3_1$. 
Since $c_2$ cannot be a star, there is $h\in t^2_1*\{v_p, v_q\}$ such that $\gamma(h) = c_2$. We remark that the 7 colors 
$c_0, c_1, c_3, c_2, c_4, \gamma(e_1), \gamma(e_2)$ are pairwise distinct.

\vskip 0.1cm
\noindent $-$ Suppose that $e_1 = t^1_1t^2_1$. Then either $t^1_1u_j$ or $t^2_1u_k$ needs a new color $c_8$. 
Then $\{\gamma(e_1),c_8\}=\{\gamma(t^1_1u_j), \gamma(t^2_1u_k)\}$, as otherwise we are done. 
If $\gamma(t^1_1u_j) = \gamma(e_1)$ and $\gamma(t^2_1u_k) = c_8$, then 
$\gamma(t^3_1l(e_2)) \circeq  \gamma(e_2)$, $\gamma(t^3_1r(e_2)) \circeq \gamma(e_2)$ and  for $w \in T_2 \setminus \{e_2\}$, 
$\gamma(t^3_1w) \circeq c_4$, hence either $l(e_2)u_j$ or $r(e_2)u_k$ provides the 9th color.
Similarly, if $\gamma(t^1_1u_j) = c_8$ and $\gamma(t^2_1u_k) = \gamma(e_1)$, then   
$\gamma(t^3_1l(e_2)) \circeq \gamma(e_2)$ and $\gamma(t^3_1r(e_2)) \circeq \gamma(e_2)$, and so either
$\gamma(u_kl(e_2))$ or $\gamma(u_kr(e_2))$ is the 9th color.	

\vskip 0.1cm
\noindent $-$ Suppose that $e_1 = t^2_1t^3_1$.  As before, either $t^2_1u_j$ or $t^3_1u_k$ needs a new color $c_8$, and  
 $\{\gamma(t^2_1u_j), \gamma(t^3_1u_k)\} = \{\gamma(e_1),c_8\}$, as otherwise we are done. 
Since $\gamma(t^2_1u_j) = \gamma(e_1)$ and $\gamma(t^3_1u_k) = c_8$ together with the existence of $h$
imply that $\gamma(t^1_1u_j)$ is the  9th color, we can assume that
 $\gamma(t^2_1u_j) = c_8$ and $\gamma(t^3_1u_k) = \gamma(e_1)$. Then $\gamma(t^1_1u_j) \circeq c_8$.
  If $t_2^3\in e_2$, then $\gamma(t_2^3u_k) \circeq \gamma(e_2)$, 
  $\gamma(l(e_2)t^3_1)\circeq \gamma(e_1)$, and hence $t^2_1u_k$ provides the 9th color. Similarly, if $t_2^3\notin e_2$, then $e_2=t_2^1t_2^2$
and  $\gamma(t_2^1u_k) \circeq \gamma(e_2)$,  $\gamma(t^3_1t_2^2)\circeq \gamma(e_1)$, and hence $t^2_1u_k$ provides the 9th color.

\vskip 0.1cm
{\sc Case 2.3}. Suppose that the segments in $\ss$ are coloured by $\gamma$ as in Figure~\ref{fig:4cases}~($c$). As before, w.l.o.g. 
we may assume that each segment incident with $v_p$ is coloured with $c_1$.

\vskip 0.1cm
{\sc Case 2.3.1}. Suppose that $|\gamma(T_2)|=1$. Let $\Delta:=\Delta(t^1_2, t^3_2, v_q)$. We note that any segment 
that crosses $\Delta$ is incident with $v_p$ or $v_q$. Then $\Delta$ is a separable subset of $Q$ with respect to $\gamma$, and so we can assume that 
$|\gamma(\Delta)|\in \{1,2\}$, as otherwise we are done by Lemma~\ref{l:1T3}. This assumption and $|\gamma(T_2)|=1$ imply that $|\gamma(\Delta)|=2$. Then 
$\gamma(Q)$ has a star with apex $v_q\in []$ and we are done by applying  Lemma~\ref{l:1T3} and Proposition~\ref{p:closed}~($iii$) 
to $Q\setminus \{v_q\}$.

{\sc Case 2.3.2}. Suppose that $|\gamma(T_1)|=1$ and $|\gamma(T_2)|=2$. We remark that the 5 colors in $C=\{c_0, c_1, \gamma(T_1)\}\cup \gamma(T_2)$ are pairwise
distinct. Let $c_6:=\gamma(t^2_1v_q), c_7:=\gamma(t^1_1u_i)$ and $c_8:=\gamma(t^3_1u_j)$ 
and note that the colors in $C\cup \{c_6, c_7, c_8\}$ are pairwise distinct. We remark that possibly $c_3\in \{c_6, c_7, c_8\}$.  
Then $\gamma(r(e_2)u_k)\circeq \gamma(e_2)$, $\gamma(l(e_2)u_j)\circeq c_8$, 
$\gamma(t^3_1u_i)\circeq c_7$ and $\gamma(t^1_1v_q)\circeq c_6$. Then $\gamma(t^2_1l(e_2))$ must be the 
$9$th color.
   
\vskip 0.1cm
{\sc Case 2.3.3}. Suppose that $|\gamma(T_1)|=2=|\gamma(T_2)|$. 
\vskip 0.1cm
\noindent $\bullet$ Suppose that $\gamma(Q)$ has 2 stars with apices $u,v\in T_1$. It is enough to show that $|\gamma(Q\setminus \{u,v\})|\geq 7$. 
 Let $w$ be such that $T_1=\{u,v,w\}$. By Lemma~\ref{l:TXT} we may assume that no point of $T_2\cup \{w\}$ is an apex of $\gamma$.
 We note that the 4 colors in $C=\{c_0, c_1\}\cup \gamma(T_2)$ are pairwise distinct. 
Then $c_5:=\gamma(wu_i)$ must be the $5$th color. Since $w$ cannot be an apex of $\gamma$, then $c_6:=\gamma(wv_q)$ must be
the $6$th color. We remark that possibly $c_3\in \{c_5, c_6\}$. Then either $l(e_2)u_j$ or $r(e_2)u_k$ provides the $7$th color.

\vskip 0.1cm
\noindent $\bullet$ Suppose that $\gamma(Q)$ has exactly 1 star with apex in $T_1$. Let $u$ be the only apex of $\gamma$ in $T_1$, and suppose that 
$T_1=\{u,t_1^l, t_1^m\}$ with $l<m$. It is enough to show that $|\gamma(Q\setminus \{u\})|\geq 8$. Let $\Delta:=\Delta(t_1^l, t_1^m, v_q)$. 
Since any segment that crosses a segment of $\Delta$ is incident a point of $\Delta \cup \{v_p\}$, then 
$\Delta$ is a separable subset of $Q$ with respect to $\gamma$ and so we can assume that $|\gamma(\Delta)|\in \{1,2\}$, as otherwise we are done by Lemma~\ref{l:1T3}. Let
$c_2:=\gamma(t_1^lt_1^m)$. Note that the 6 colors in $C=\{c_0, c_1, c_2, c_3\}\cup \gamma(T_2)$ are pairwise distinct. 

If $|\gamma(\Delta)|=1$ then $\gamma(\Delta)=\{c_2\}$, and $\gamma(t_1^lu_k)$ must be the $7$th color. Then the triangle $\Delta':=\Delta(e_2, t_1^m)$ 
must be coloured with $\gamma(e_2)$ or we are done. Then either $l(e_2)v_q$ or $r(e_2)v_q$ provides the $8$th color. 

Suppose now that $|\gamma(\Delta)|=2$. Then two sides of $\Delta$ have the same color. Since $t_1^l$ cannot be an apex,
 then $\gamma(t_1^lv_q)\neq c_2$, and so either $\gamma(v_qt_1^l)=\gamma(v_qt_1^m)$ or $\gamma(v_qt_1^m)=c_2$.
 If $\gamma(v_qt_1^l)=\gamma(v_qt_1^m)$, then $c_7:=\gamma(v_qt_1^l)$ must be the $7$th color, as otherwise $c_3=\gamma(v_qt_1^l)$ 
is a star with apex  $v_q\in []$ and we are done by Lemma~\ref{l:1T3}. Then $\gamma(t_1^lu_k)\circeq c_2$ and either 
$l(e_2)t_1^m$ or $r(e_2)u_k$ provides the $8$th color. Thus we may assume that $\gamma(v_qt_1^m)=c_2$ and hence $c_7:=\gamma(t_1^lu_k)$ must be the $7$th color. 
Since $t_1^l$ and $t_1^m$ cannot be apices of $\gamma$,  then $\gamma(v_qt_1^l)\circeq c_3$ and $\gamma(t_1^mu_j)\circeq c_7$, respectively. 
If $t^3_2\in e_2$ then either $l(e_2)u_k$ or $t_2^3v_q$ provides the $8$th color,  and if $t^3_2\notin e_2$ then either $t_2^2u_k$ or $t_2^1v_q$ provides the $8$th color.

\vskip 0.1cm
\noindent $\bullet$ Suppose that no point of $T_1$ is an apex of $\gamma$. Then either $e_1=t_1^1t_1^2$ or $e_1=t_1^2t_1^3$ and 
there is a segment $h\in t^2_1*\Delta_U$ such that $\gamma(h)=\gamma(t_1^1t_1^3)$.
Let $c_2$ and $c_4$ be such that $\{c_2, \gamma(e_1)\}=\gamma(T_1)$ and $\{c_4, \gamma(e_2)\}=\gamma(T_2)$. We remark that the 7 colors in 
 $C=\{c_0, \ldots, c_4, \gamma(e_1), \gamma(e_2)\}$ are pairwise distinct. 

Let $\Delta:=\Delta(t^1_1, t^2_1,v_q)$. Since any segment that crosses $\Delta$ is incident with $v_p$ and all these are colored with $c_1$, then
$\Delta$ is a separable subset of $Q$ with respect to $\gamma$, and so we can assume that $|\gamma(\Delta)|\in \{1,2\}$, as otherwise we are done by Lemma~\ref{l:1T3}. 
 
Suppose that $|\gamma(\Delta)|=1$. Then $e_1=t^1_1t^2_1$ and $\gamma(e_1)=\gamma(\Delta)$. Then $\gamma(t^1_1u_k)$ is the $8$th color. This last and
the existence of $h$ imply that either $l(e_2)t^2_1$ or $r(e_2)t^3_1$ provides the $9$th color.

Suppose now that $|\gamma(\Delta)|=2$. Since $t^1_1$ cannot be an apex, we must have $\gamma(t^1_1t^2_1)\neq \gamma(t^1_1v_q)$. Then
either $\gamma(t^1_1t^2_1)=\gamma(t^2_1v_q)$ or $\gamma(t^1_1v_q)=\gamma(t^2_1v_q)$. 
If $\gamma(t^1_1t^2_1)=\gamma(t^2_1v_q)$, then $e_1=t_1^1t_1^2$ and $c_8:=\gamma(t^1_1u_k)$ is the $8$th color. Since $t_1^1$ 
cannot be an apex of $\gamma$, then $\gamma(t^1_1v_q)\circeq c_3$. From this, the existence of $h$, and our supposition that $v_q\in []$ cannot be an apex of $\gamma$, it follows that $\gamma(t^3_1v_q)$ must be the $9$th color.
Suppose finally that $\gamma(v_qt^1_1)=\gamma(v_qt^2_1)$. Since $v_q$ cannot be an apex of $\gamma$, then $\gamma(v_qt^1_1)\neq c_3$ and so $\gamma(v_qt^1_1)$ 
must be the $8$th color. Then $\gamma(r(e_2)u_k)\circeq \gamma(e_2)$. If $e_1=t_1^1t_1^2$, then either $t_1^1u_k$ or $t_1^2l(e_2)$ provides the $9$th color. Similarly, 
if $e_1=t_1^2t_1^3$, either $t_1^3u_k$ or $t_1^2l(e_2)$ provides the $9$th color.

\vskip 0.1cm
{\sc Case 2.4}. Suppose that the segments in $\ss$ are coloured by $\gamma$ as in Figure~\ref{fig:4cases}~($d$). 

\vskip 0.1cm
{\sc Case 2.4.1}. Suppose that $|\gamma(T_1)|=1=|\gamma(T_2)|$. Let $c_6:=\gamma(t^1_1t^1_2)$, $c_7:=\gamma(t^2_1t^2_2)$ and $c_8:=\gamma(t^3_1t^3_2)$.
Clearly, the colors $c_0, c_1, c_3, c_6, c_7, c_8, \gamma(T_1), \gamma(T_2)$ are pairwise distinct. Then  
$\gamma(t_1^3u_k)\circeq c_8$ and $\gamma(t_2^3u_k)\circeq c_8$, and so $t^3_1u_j$ provides the $9$th color.   

\vskip 0.1cm
{\sc Case 2.4.2}. Suppose that $|\gamma(T_1)|=1$ and $|\gamma(T_2)|=2$. Let $\gamma(T_1)=\{c_2\}$, $\gamma(T_2)=\{c_4, \gamma(e_2)\}$,
$c_7:=\gamma(t^1_1u_j)$ and   $c_8:=\gamma(t^3_1u_k)$. We note that the $8$ colors $c_0, c_1, c_2, c_3, c_4, c_7, c_8, \gamma(e_2)$ 
are pairwise distinct. Then $\gamma(t_1^2l(e_2))\circeq \gamma(e_2)$, $\gamma(t_1^2r(e_2))\circeq \gamma(e_2)$, $\gamma(u_kr(e_2))\circeq c_8$ and 
$\gamma(u_jt_1^3)\circeq c_7$. Then $t^1_1l(e_2)$ provides the $9$th color. 

\vskip 0.1cm
{\sc Case 2.4.3}. Suppose that $|\gamma(T_1)|=2$ and $|\gamma(T_2)|=1$. Let $\gamma(T_1)=\{c_2, \gamma(e_1)\}$, $\gamma(T_2)=\{c_4\}$, 
$c_7:=\gamma(t^1_2u_j)$ and $c_8:=\gamma(t^3_2u_k)$. Then the $8$ colors $c_0, c_1, c_2, c_3, c_4, c_7, c_8, \gamma(e_1)$ 
are pairwise distinct. Then $\gamma(t_2^2u_j)\circeq c_7$, $\gamma(l(e_1)t_2^1)\circeq \gamma(e_1)$, $\gamma(r(e_1)t_2^1)\circeq \gamma(e_1)$ and 
$\gamma(t^2_2u_k)\circeq c_8$. Clearly, there exists $w\in \{t_1^1, t_1^3\}\cap \{l(e_1), r(e_1)\}$. Then $\gamma(wu_i)\circeq c_1$ and either $t_2^1v_p$ or $t_2^3v_q$ provides the $9$th color.
 
\vskip 0.1cm
{\sc Case 2.4.4}. Suppose that $|\gamma(T_1)|=2=|\gamma(T_2)|$.  Let $t_2'$ be such that $T_2=\{l(e_2),r(e_2), t_2'\}$, and let $\gamma(T_2)=\{c_4, \gamma(e_2)\}$. 

\vskip 0.1cm
\noindent $\bullet$ Suppose that $\gamma(Q)$ has 2 stars with apices $u,v\in T_1$. It is enough to show that $|\gamma(Q\setminus \{u,v\})|\geq 7$. 
 Let $w$ be such that $T_1=\{u,v,w\}$. Then $c_0, c_1, c_3, c_4, \gamma(e_2)$ are pairwise distinct. Clearly, 
 $c_6:=\gamma(wu_j)$ must be the $6$th color, and hence
 $\gamma(l(e_2)u_k)\circeq \gamma(e_2)$,  $\gamma(r(e_2)u_k)\circeq \gamma(e_2)$,
  $\gamma(wl(e_2))\circeq c_6$, $\gamma(l(e_2)u_j)\circeq c_6$, $\gamma(wu_i)\circeq c_1$ and $\gamma(t_2' u_k)\circeq c_4$.  
Then either $l(e_2)v_p$ or $r(e_2)v_q$ provides the $7$th color.   
 
\vskip 0.1cm
\noindent $\bullet$  Suppose that $\gamma(Q)$ has exactly 1 star with apex in $T_1$. Let $u$ be the only apex of $\gamma$ in $T_1$, let $t_1^l, t_1^m$ be such that 
$T_1=\{u,t_1^l, t_1^m\}$ with $l<m$. We need to show $|\gamma(Q\setminus \{u\})|\geq 8$. 
We note that $c_0, c_1, c_2:=\gamma(t_1^lt_1^m), c_3, c_4,  \gamma(e_2)$ are pairwise distinct. Then either $l(e_2)u_j$ or $r(e_2)u_k$ provides the $7$th color $c_7$.

\vskip 0.1cm
\noindent $-$ Suppose that $\gamma(r(e_2)u_k)=c_7$. Then $\gamma(l(e_2)u_j)\circeq \gamma(e_2)$,  $\gamma(t_1^lu_j)\circeq c_2$, $\gamma(t_1^mu_j)\circeq c_2$,  
$\gamma(t_1^lu_i)\circeq c_1$ and $\gamma(t_1^mu_k)\circeq c_7$. If $t^3_2\in e_2$, then either $t_1^mv_p$ or $t_2^3v_q$ provides the $8$th color. 
Otherwise $e_2=t^1_2t^2_2$, and $\gamma(t_2^1t_1^m)\circeq \gamma(e_2)$, $\gamma(t_2^2u_j)\circeq c_7$ and $\gamma(t_2^3u_k)\circeq c_4$. 
Then either $t_1^mv_p$ or $t_2^2v_q$ provides the $8$th color.

\vskip 0.1cm
\noindent $-$ Suppose now that $\gamma(l(e_2)u_j)=c_7$. Then $\gamma(r(e_2)u_k)\circeq \gamma(e_2)$.
We may assume that $\gamma(t_1^mu_k)\in \{c_2, c_7\}$, as otherwise $\gamma(t_1^mu_k)$ is the required $8$th color. If $\gamma(t_1^mu_k)=c_7$,
 then $\gamma(t_1^ll(e_2))\circeq c_2$ and $\gamma(t_1^l u_j)\circeq c_2$. Since these imply (a contradiction) that $t_1^l$ is an apex of $\gamma$, 
 we may assume that  $\gamma(t_1^mu_k)=c_2$. Again, since $t_1^m$ cannot be an apex, then $\gamma(t_1^lu_k)\circeq c_2$,
 $\gamma(t_1^l u_j)\circeq c_7$, $\gamma(t_1^lu_i)\circeq c_1$ and $\gamma(l(e_2)u_k)\circeq \gamma(e_2)$. Then either $t_1^mv_p$ or
 $l(e_2)v_q$ provides the $8$th color.

\vskip 0.1cm
\noindent $\bullet$ Suppose that no point of $T_1$ is an apex of $\gamma(Q)$. Then $e_1=t_1^2t_1^m$ for some $m\in \{1,3\}$.  
Let $c_2$ and $c_4$ be such that $\{c_2, \gamma(e_1)\}=\gamma(T_1)$ and $\{c_4, \gamma(e_2)\}=\gamma(T_2)$. 
Since $\gamma$ has no apices in $T_1$, there is a segment $h\in t^2_1*\Delta_U$ 
such that $\gamma(h)=c_2$.  We note that the 7 colors $c_0, \ldots, c_4, \gamma(e_1), \gamma(e_2)$ are pairwise distinct. Clearly, either 
$l(e_2)u_j$ or $r(e_2) u_k$ provides the $8$th color $c_8$.

\vskip 0.1cm
\noindent $-$ Suppose that $\gamma(r(e_2)u_k)=c_8$. Then $\gamma(l(e_2)u_j)\circeq \gamma(e_2)$,
 $\gamma(t_1^mu_j)\circeq \gamma(e_1)$ and $\gamma(t_1^2v_q)\circeq c_3$. Since $t_1^m$ cannot be an apex, 
 there is $h_1\in t^2_1*\Delta_U$ such that $\gamma(h_1)=\gamma(e_1)$. The existence of $h_1$ and $h$ imply that 
 $\gamma(t_1^1v_p)\circeq c_1$ and so $\gamma(u_it_1^1)\circeq c_1$. Then either $t_1^2v_p$ or $t_1^3v_q$ provides the $9$th color.

\vskip 0.1cm
\noindent $-$ Suppose that $\gamma(l(e_2)u_j)=c_8$. Then $\gamma(r(e_2)u_k)\circeq \gamma(e_2)$.
Since if $\gamma(t_1^mu_j)=c_8$, the existence of $h$ implies that $\gamma(t_1^{4-m}l(e_2))$ is the $9$th color. Then 
$\gamma(t_1^mu_j)\circeq \gamma(e_1)$, and as in previous paragraph, we can deduce $\gamma(t^2_1v_q)\circeq c_3$ and that there is $h_1\in t_1^2*\Delta_U$ such that $\gamma(h_1)=\gamma(e_1)$. From the existence of $h$ and $h_1$ it follows that $\gamma(t_1^1v_p)\circeq c_1$ and $\gamma(t_1^1u_i)\circeq c_1$. Then either 
$t_1^2v_p$ or $t_1^3v_q$ provides the $9$th color.
\end{proof} 


\section{The proof of Teorem~\ref{thm:main}}\label{s:proof}

We note that $d(2)=1$ follows trivially.  On the other hand, by the Szekeres and Peters result~\cite{6gons}, we know that any point set $P$ in the plane in general position with 
$|P|\geq 17$ contains a subset $Q$ such that 
$Q\sim C_6$. Starting with a coloring $\beta'$ of $D(Q)$ with 3 colors, we proceed as in the proof of Proposition~\ref{p:n-2} in order to extend $\beta'$ to a  coloring 
$\beta$ of $D(P)$ by adding a new star $S_i$ of color $c_i$ with apex $p_i$ for each $p_i\in P\setminus Q$. Since $\beta$ has exactly $|P|-3$ colors, we have that
$d(n)\leq n-3$ for any integer $n\geq 17$. 

Let $\gamma$ be an optimal coloring of $D(X)$. By Lov\'asz's theorem and Proposition~\ref{p:closed}($i$), in order to show Theorem~\ref{thm:main} it suffices to show that 
$|\gamma(X)|\geq 14$. We analyze separately several cases, depending on the sizes of $\gamma(A)$ and $\gamma(B)$.
By Corollary~\ref{c:3,4} we know that $3\leq |\gamma(A)|, |\gamma(B)|\leq 4$.

\vskip 0.1cm
{\sc Case 1}.  $|\gamma(A)|=3$ and $|\gamma(B)|=3$. It follows from Proposition~\ref{p:gamma(A)=3} that there are $a_i, a_j, a_k\in A$ 
(respectively, $b_{p}, b_{q}, b_{r}\in B$) 
with $i< j< k$ (respectively, $p< q < r$) such that none of them is an apex of $\gamma(A)$ (respectively, $\gamma(B)$). Then,
$c_1=\gamma(a_ib_{p}), c_2=\gamma(a_jb_{q})$ and $c_3=\gamma(a_kb_{r})$ are pairwise distinct. Let $ab$ be the segment in 
$\{a_ib_{p}, a_jb_{q}, a_kb_{r}\}$ that is closest to $t^1_1$. By the choice of  $ab$ we have that $\gamma(A \cup B\setminus \{a,b\})$
is disjoint from $\gamma (T_1\cup \{a, b\}\cup T_2)$. On the other hand, we note that $|\gamma(A \cup B\setminus \{a,b\})|\geq |\gamma(A)|+|\gamma(B)|+2=8$. 
The required inequality follows by applying Corollary~\ref{c:AB>=8} to $T_1\cup \{a,b\}\cup T_2$. 
 
\vskip 0.1cm
{\sc Case 2}. Suppose that $|\gamma(B)|=3$ and $|\gamma(A)|=4$. By Proposition~\ref{p:gamma(A)=3} there are  
$b_{p}, b_{q}, b_{r}\in B$ with $p< q < r$ such that none of them is an apex of $\gamma(B)$. Let $\Delta_B:=\Delta(b_{p}, b_{q}, b_{r})$, 
and let $b_{l_1}, b_{l_2}$ be the two points in $B\setminus \Delta_B$. We now modify slightly $\gamma$ as follows: 
recolor the three segments of $\Delta_B$ with color $c_1$ and use color $c_{l_k}$ to color each segment of $\xx$ incident with $b_{l_k}$ for $k=1,2$. 
It is not hard to see that this modification keep $\gamma$ proper and with same number of colors.

\vskip 0.1cm
\noindent {(2.1)} Suppose that $\gamma^{\star}(A)\leq 4$. Then there is $a\in A$ that is not an apex of $\gamma(A)$.
Let us denote by $Q$ to $T_1\cup \{a, b_{p}, b_{q}, b_{r}\} \cup T_2$. From Proposition~\ref{p:closed}~($ii$) and the
choice of $a, b_{l_1}, b_{l_2}$ we can deduce that $|\gamma(X)|\geq |\gamma(Q)|+6$. By applying Lemma~\ref{l:1T3} to 
$Q$ we know that $|\gamma(Q)|\geq 8$, as required.

\vskip 0.1cm
\noindent {(2.2)} Suppose that $\gamma^{\star}(A)=5$. By Proposition~\ref{p:5apices}, there are $a_i, a_j\in A$ 
with $i<j$ such that $\{a_ia_j\}$ is a $2-$star of $\gamma(A)$ and none of $a_i, a_j$ is an apex of any other star of $\gamma(A)$. 
As before, let use denote by $Q$ to $T_1\cup \{a_i, a_j, b_{p}, b_{q}, b_{r}\} \cup T_2$. Again, from Proposition~\ref{p:closed}~($ii$) and
 the choice of $a_i, a_j, b_{l_1}, b_{l_2}$ we can deduce that $|\gamma(X)|\geq |\gamma(Q)|+5$. By applying Lemma~\ref{l:2T3} to $Q$
  we know that $|\gamma(Q)|\geq 9$, as required.

\vskip 0.1cm
{\sc Case 3}. Suppose that $|\gamma(A)|=3$ and $|\gamma(B)|=4$. This case can be handled in the same manner as Case 2 (just interchange the roles of $A$ and $B$). 

\vskip 0.1cm
{\sc Case 4}. Suppose that $|\gamma(A)|=4$ and $|\gamma(B)|=4$. 
\vskip 0.1cm
\noindent {(4.1)} Suppose that $\gamma^{\star}(I)\leq 4$ for some $I\in \{A,B\}$. We only analyze the case $I=A$ because the case $I=B$ can be handled analogously. Then there is 
$a\in A$ such that $a$ is not an apex of $\gamma(A)$.

\vskip 0.1cm
\noindent {$\bullet$} Suppose that $\gamma^{\star}(B)\leq 4$. Then there is $b\in B$ such that $b$ is not an apex of $\gamma(B)$. 
Let $Q=T_1\cup \{a,b\}\cup T_2$. From Proposition~\ref{p:closed}~($ii$) and the choice of $a$ and $b$ we can deduce that
$|\gamma(X)|\geq |\gamma(Q)|+|\gamma(A)|+|\gamma(B)|=|\gamma(Q)|+8$. By applying Lemma~\ref{l:/TT} to $Q$ we obtain that $|\gamma(Q)|\geq 6$, as required.


\vskip 0.1cm
\noindent {$\bullet$} Suppose that $\gamma^{\star}(B)=5$. By Proposition~\ref{p:5apices}, there are $b_{p}, b_{q}\in B$ 
with $p<q$ such that $e=b_{p}b_{q}$ is a $2-$star of $\gamma(B)$ and neither $b_{p}$ nor $b_{q}$ is an apex of any other star of $\gamma(B)$. 
Let $Q=T_1\cup \{b_{p}, b_{q}, a\}\cup T_2, A'=A\setminus \{a\}$ and $B'=B\setminus \{b_{p}, b_{q}\}$. 
From Proposition~\ref{p:closed}~($ii$) and the choice of $b_{p}, b_{q}$ and $a$ we can deduce that
$|\gamma(X)|\geq |\gamma(Q)|+|\gamma(A')|+|\gamma(B')|=|\gamma(Q)|+7$. By applying Lemma~\ref{l:TDT} to $Q$ we obtain that $|\gamma(Q)|\geq 7$, as required.

\vskip 0.1cm
\noindent {(4.2)} Suppose that $\gamma^{\star}(A)=5=\gamma^{\star}(B)$. By Proposition~\ref{p:5apices}, there are $a_i, a_j\in A$ (resp. $b_{p}, b_{q}\in B$)
with $i<j$ (resp. $p<q$) such that $\{a_ia_j\}$  (resp.  $\{b_{p}b_{q}\}$) is a $2-$star of $\gamma(A)$ (resp. $\gamma(B)$) and none of $a_i, a_j$ 
(resp. $b_{p}, b_{q}$) is an apex of any other star of $\gamma(A)$ (resp. $\gamma(B)$). 
Let $Q=T_1\cup \{a_i, a_j, b_{p}, b_{q}\}\cup T_2$. From Proposition~\ref{p:closed}~($ii$) and the choice of 
$a_i, a_j, b_{p}, b_{q}$ we can deduce that
$|\gamma(X)|\geq |\gamma(Q)|+6$. By applying Lemma~\ref{l:2T2} to $Q$ we obtain that $|\gamma(Q)|\geq 8$, as required.








 \end{document}